\crefname{lem}{Lemma}{Lemmas}
\crefname{cor}{Corollary}{Corollaries}
\crefname{thm}{Theorem}{Theorems}
\crefname{assumption}{Assumption}{Assumptions}
\crefname{equation}{Equation}{Equations}
\newcommand{\Directed}{\mathscr{D}}
\newcommand{\multito}{\rightrightarrows}
\definecolor{LightGray}{rgb}{.8,.8,.8}
\definecolor{darkred}{rgb}{0.5,0,0}
\definecolor{darkgreen}{rgb}{0, 0.3,0}
\definecolor{darkblue}{rgb}{0,0,0.6}
\declaretheorem[style=plain,numberwithin=section,name=Theorem]{theorem}
\declaretheorem[style=plain,sibling=theorem,name=Lemma]{lemma}
\declaretheorem[style=plain,sibling=theorem,name=Proposition]{proposition}
\declaretheorem[style=plain,sibling=theorem,name=Corollary]{corollary}
\declaretheorem[style=plain,sibling=theorem,name=Claim]{claim}
\declaretheorem[style=definition,sibling=theorem,name=Definition]{definition}
\declaretheorem[style=remark,qed=$\triangleleft$,sibling=theorem,name=Remark]{remark}
\declaretheoremstyle[
    spaceabove=0pt, 
    spacebelow=6pt, 
    headfont=\normalfont\itshape, 
    bodyfont = \normalfont,
    postheadspace=1em, 
    qed=$\triangleleft$, 
    headpunct={.}]{myproofstyle} 
\declaretheorem[name={Proof of Claim}, style=myproofstyle, unnumbered]{claimproof}
\numberwithin{equation}{section}
\numberwithin{theorem}{section}
\newcommand{\Borel}[1]{\mathcal{B}_{#1}}
\def\imod#1{\allowbreak\mkern10mu({\operator@font mod}\,\,#1)}
\newcommand{\II}{\mathbb{I}}
\newcommand{\Measures}{\mathcal{M}}
\newcommand{\ProbMeasures}{\Measures_1}
\newcommand{\WW}{\mathrm{W}}
\newcommand{\eqW}{\equiv_{\WW}}
\newcommand{\leW}{\le_{\WW}}
\newcommand{\sWW}{\mathrm{sW}}
\newcommand{\eqsW}{\equiv_{\sWW}}
\newcommand{\lesW}{\le_{\sWW}}
\newcommand{\Baire}{\ensuremath{\Nats^\Nats}}
\newcommand{\pairing}[1]{\langle #1\rangle}
\newcommand{\defn}[1]{{\bf{#1}}}
\DeclareMathOperator{\supp}{supp}
\newcommand{\M}{{\mathcal{M}}}
\renewcommand{\L}{\\L}
\newcommand{\Naturals}{\mathbb{N}}
\newcommand{\Nats}{\Naturals}
\newcommand{\Sierpinski}{\mathbb{S}}
\newcommand{\Sier}{\Sierpinski}
\DeclareMathOperator{\EC}{EC}
\DeclareMathOperator{\DD}{D}
\newcommand{\Dstar}[1]{\DD^\star_{#1}}
\DeclareMathOperator{\id}{id}
\DeclareMathOperator{\Lim}{Lim}
\newcommand{\ACD}{\mathcal{C}}
\renewcommand{\Pr}{\mathbf{P}}
\newcommand{\defas}{:=}
\newcommand{\dee}{\mathrm{d}}
\newcommand{\st}{\,:\,}
\def\<{\langle}
\def\>{\rangle}
\def\^{{}^{\wedge}}
\def\[#1\]{\begin{align}#1\end{align}}
\newcommand{\pars}{\,\cdot\,}
\newcommand{\w}{\ensuremath{\omega}}
\newcommand{\Reals}{\ensuremath{\mathbb{R}}}
\newcommand{\Rationals}{\ensuremath{\mathbb{Q}}}
\newcommand{\Cantor}{{\ensuremath{2^\w}}}
\newcommand{\PF}[2]{{#1}[#2]}
\newcommand{\icmu}[1]{#1^{\circ \mu}}
\newcommand{\icmus}[2]{#1^{\circ #2}}
\newcommand{\HIDE}[1]{}
\newcommand{\DUP}[1]{}
\newcommand{\metric}{{d}}
\newcommand{\pmetric}{\metric_{\ProbMeasures}}
\newcommand{\tmetric}{\metric_T}
\newcommand{\AD}[1]{\mathscr{O}_{#1}}
\newcommand{\CB}[1]{\mathscr{B}_{#1}}
\newcommand{\Balls}{B}
\newcommand{\ContFuncs}{\mathscr{C}}
\newcommand{\Opens}{\mathscr{O}}
\newcommand{\ZO}{\mathbb B}
\newcommand{\LReals}{\Reals_{\prec}}
\newcommand{\UReals}{\Reals_{\succ}}
\begin{document}

\title[On computability and disintegration]
{On computability and disintegration}

\author[Ackerman]{Nathanael~L. Ackerman}
\address{
Department of Mathematics \\
Harvard University \\
One Oxford Street, Cambridge, MA 02138, USA
}
\email{nate@math.harvard.edu}
\thanks{}

\author[Freer]{Cameron~E. Freer}
\address{
Department of Brain and Cognitive Sciences\\
Massachusetts Institute of Technology\\
77 Massachusetts Ave.\\
Cambridge, MA 02139\\
USA\\
and
Gamalon Labs\\
One Broadway\\
Cambridge, MA 02142\\
USA
}
\email{freer@mit.edu}

\thanks{}

\author[Roy]{Daniel~M. Roy}
\address{
Department of Statistical Sciences\\
University of Toronto\\
100 St.\ George St.\\
Toronto, ON M5S 3G3\\
Canada
}
\email{droy@utstat.toronto.edu}
\thanks{}



\begin{abstract}
We show that the disintegration operator on 
a complete separable metric space
along a
projection map, restricted to measures for which there is a unique
continuous disintegration,
is 
strongly 
Weihrauch equivalent to the limit operator $\Lim$.
When a measure does not have a unique continuous disintegration, we may still obtain a disintegration 
when some
basis of continuity sets  
has
the Vitali covering property with respect to the measure;
the disintegration, however, may depend on the choice of sets.
We show that, when the basis is computable, the resulting disintegration is 
strongly
Weihrauch reducible to $\Lim$, and further exhibit a single distribution
realizing this upper bound.
\end{abstract}


{\let\thefootnote\relax\footnotetext{\noindent{\it
Keywords:}\enspace {conditional probability;
computable probability theory; and Weihrauch reducibility.}
\\
\indent {\it MSC2010:}
Primary:
03F60, 
28A50; 
Secondary:
68Q17, 
60A05, 
62A01, 
65C50, 
68Q87 
%
}}


\maketitle

\setcounter{page}{1}
\thispagestyle{empty}

\begin{minipage}{.040\linewidth}
\ 
\end{minipage}%
\begin{minipage}{.82\linewidth}
\begin{scriptsize}
\renewcommand\contentsname{\!\!\!\!}
\setcounter{tocdepth}{3}
\tableofcontents
\end{scriptsize}
\end{minipage}


\newpage

\section{Introduction}
\label{IntroSec}

Conditioning is a basic tool in probability theory 
and the core operation in Bayesian statistics.
Given a pair of random variables $\theta$ and $D$, 
representing, e.g., an unobserved parameter of interest and some data collected in order to estimate the parameter, 
the conditional distribution of $\theta$ given $D=x$ can be understood as the distribution $\mu^x$
that assigns to every measurable set $A$ the probability 
\[\label{basicdef}
	\mu^x(A) = \frac {\Pr \{ \theta \in A \, \textrm{~and~}\, D = x \} } { \Pr \{ D = x \} },
\]
provided  that
\[\label{positive} 
\Pr \{ D = x \} > 0.
\]
When $\mu^x$ is defined for all $x$ in the range of $D$,
the map $x \mapsto \mu^x$ is called a \emph{disintegration} of the distribution of $\theta$ with respect to $D$.

When property \eqref{positive} does not hold, \cref{basicdef} is meaningless. 
In modern probability theory, rather than defining the conditional probability $\mu^x(A)$ for a particular $x$, 
the map $x \mapsto \mu^x(A)$
is defined all at once as a function $f$ satisfying
\[
	\Pr \{ \theta \in A \,\textrm{~and~}\, D \in B \} = \int_B f(x)\, \Pr_D (\dee x),
\]
for all measurable subsets $B$, where $\Pr_D \defas \Pr \{D \in \cdot\, \}$ is the distribution of $D$.  
The existence of such an $f$ is guaranteed by the Radon--Nikodym theorem. 
The resulting disintegrations are, however, defined only up to a null set, and so
their evaluation at points---namely, at the observed data values, as is statistical practice---has typically
relied upon additional (often unstated) hypotheses.  

One such hypothesis is the continuity of \emph{some} disintegration, which ensures that 
any continuous disintegration
is
canonically defined everywhere in the support of the distribution of the
conditioning variables. It is interesting to consider the computability
of the conditioning operator in this context.  

In \cite{AFR10} and \cite{DBLP:conf/lics/AckermanFR11}, we showed that
there are computable random variables whose disintegrations are continuous but not computable on any measure one set (indeed, on any set of sufficiently large measure).
Here we strengthen these
results and make them uniform by providing precise bounds on how noncomputable 
disintegration can be.

In the present paper we make use of 
constructive definitions of
disintegrations at a point $x$ in terms of limits of the form
\[
	\lim_{n \to \infty} \frac {\Pr \{ \theta \in A \,\textrm{~and~}\, D \in B_n \} } { \Pr \{ D \in B_n \} }.
\]
for some sequence of sets $B_n$ ``converging'' to $x$ in an appropriate sense.
We make repeated use of notions developed by Tjur \cite{MR0123456,MR595868}. 
Namely, Tjur
introduced a property that may hold of a distribution \emph{at a point}
which implies
that all sensible choices of $B_n$ lead to the same solution. 
Furthermore, under certain
regularity conditions, if a distribution satisfies Tjur's property at every point, and so can be conditioned at every point, then 
the resulting disintegration is the unique continuous disintegration---the ideal case for statistical purposes.
We show that the disintegration 
operator on the collection of distributions (on a fixed space) for which
the Tjur property holds everywhere is strongly 
Weihrauch equivalent to the $\Lim$ operator.

Tjur's property is rather
restrictive, and so we also explore some other attempts to give explicit formulas for disintegrations in terms of limits.
Other work in this direction includes Kolmogorov's original axiomatization~\cite{MR0362415}
and the work of Pfanzagl~\cite{MR548898}
and of Fraser and Naderi~\cite{MR1462405}.
The latter two papers approach conditioning as the differentiation of a set function.  Pfanzagl's definition extends Kolmogorov's own observation that limits in the case of the real line admit conditional probabilities.  In general, the resulting disintegrations are only defined up to null sets and the resulting conditional probability will depend on the chosen sequence of sets $B_n$. Fraser and Naderi give conditions on a collection of sets that imply that there is no such dependence, and we use their notions in 
Section~\ref{specific}
to show that an everywhere-defined disintegration is strongly Weihrauch reducible to $\Lim$, and moreover, that this upper bound is achieved by a particular disintegration.

Recently there has been a great deal of progress towards the general program of determining the complexity of various operations in analysis by placing the relevant operators within the hierarchy of Weihrauch degrees (see, e.g., \cite{MR2760117} and \cite{MR2791341}). 
The Lim operator plays a central
role among the Weihrauch degrees, analogous to that of the halting set among Turing degrees. (For background and various equivalences, see
\cite[\S3]{Brattka2012623}.)
Viewed within this program, the present paper extends certain consequences of results about Radon--Nikodym derivatives. Namely, a
result by Hoyrup, Rojas, and Weihrauch \cite{HRWCompJ} (see also \cite{HR11}) can be shown to imply that the map taking a
distribution and a real $\varepsilon > 0$ to a continuous disintegration on
some $(1-\varepsilon)$-measure set is Weihrauch reducible to the limit
operator $\Lim$, a result which is strengthened by our work.

\subsection{Outline of the paper}
In Section~\ref{Tjursection}, we describe 
necessary and sufficient conditions for the existence of a continuous disintegration
due to Tjur \cite{MR0123456}, as well as weaker sufficient conditions for the existence of a disintegration
due to Fraser and Naderi \cite{MR1462405}.
Then in Section~\ref{Wred}, we 
recall the relevant definitions about Weihrauch degrees and represented spaces. We also define and describe some basic properties of the operators $\EC$ and $\Lim$, which are key in our arguments (see Definitions~\ref{EC definition} and \ref{Lim definition} for their definitions).

Next, in Section~\ref{continuous}, we
use Tjur's characterization 
to show that in a natural setting,
the conditioning operator, which maps a distribution to a continuous
disintegration, is strongly Weihrauch equivalent to $\Lim$.
In particular, in \S\ref{lower-uniform},
we use a relativization of a simplified variant of the main construction of
\cite{AFR10}
to obtain a lower bound in the Weihrauch degrees of the conditioning operator.

In Section~\ref{specific}, we also consider those measures for which the disintegration is not
continuous, and show that under certain conditions, when the disintegration
exists everywhere, the disintegration is strongly Weihrauch reducible to $\Lim$.
We also exhibit
a specific disintegration that realizes this upper bound.

\section{Disintegration}
\label{Tjursection}

In this section we define the abstract notion of disintegration rigorously.
In doing so, we highlight the fact that disintegrations are, in general, not well-defined at points. 
We also discuss various conditions that allow us to 
provide a canonical definition at points.
Note that these definitions are in terms of probability measures rather than in terms of random variables, 
as was the case in Section~\ref{IntroSec}.

For a topological space $X$, write $\Borel{X}$ to denote the Borel $\sigma$-algebra on $X$. When we speak of the \emph{measurable space} $X$ without mentioning the $\sigma$-algebra, the Borel $\sigma$-algebra is intended.

\begin{definition}
Let $S$ be a topological space and let $\ProbMeasures(S)$ denote the collection of Borel probability measures on $S$. 
Then the \defn{weak topology} on $\ProbMeasures(S)$ is that generated by sets of the form 
\[
\bigl\{\mu \in \ProbMeasures(S) \st \bigl | \textstyle \int_S \varphi \,\dee \mu - c \bigr | < \varepsilon\bigr\} ,
\]
where $c, \varepsilon\in \Reals$, $\varepsilon > 0$,
and $\varphi:S \to \Reals$ is bounded and continuous.
\end{definition}

Let $S$ and $T$ be measurable spaces.
Given a measurable map $g\colon S \to T$ and 
probability measure $\mu$ on $S$, define $\PF g \mu$ to be the \defn{pushforward} measure on $T$ given by 
\[
(\PF g\mu)(B) \defas (\mu \circ g^{-1})(B) = \mu\bigl(g^{-1}(B)\bigr)
\]
for all $B \in \Borel{T}$.

\begin{definition}[Disintegration along a map]
Let $S$ and $T$ be measurable spaces, 
let $\mu$ be a probability measure on $S$,
and let $g$ be a measurable function from $S \to T$.
Then a \defn{disintegration of $\mu$ along $g$} is a 
measurable map $\kappa\colon T\to \ProbMeasures(S)$ such that
	\[\label{disinteq}
		\mu\bigl(g^{-1}(B) \cap A\bigr) = \int_B \kappa(x)(A) \ \PF g \mu (\dee x),
	\]
for all $A \in \Borel{S}$
and $B \in \Borel{T}$.
\end{definition}
We will often consider the case where $S$ is a product of two spaces and $g$ is the projection from $S$ onto one of these spaces.

We have chosen to define a disintegration to be a measurable map $\kappa\colon T\to \ProbMeasures(S)$.  As is common in probability theory,
one might instead have chosen to work with the ``uncurried'' \emph{probability kernel} $\kappa^* \colon (T \times \Borel{S}) \to [0,1]$ defined by 
$\kappa^*(t, A) \defas \kappa(t)(A)$ and assumed to be a probability measure for every fixed first parameter and a measurable function for every fixed second parameter.
For more details on the notion of disintegration, see \cite[Ch.~6]{MR1876169}.

Any two maps $\kappa, \kappa'$ satisfying the definition of a disintegration agree on a $\PF g \mu$-measure one set,
and so we call every such map a \emph{version}.
We can speak of \emph{the} disintegration, which is then 
only determined up to a $\mu$-measure zero set. 
However, by adding a continuity requirement we are able to pin down a version uniquely. When we speak about the continuity of disintegrations,
the topology on the space of measures will be the weak topology.

\begin{lemma}\label{contdistareunique}
Let $\mu$ be a distribution on $S$, let $g \colon S \to T$ be measurable, let $\nu = \PF g \mu$, 
and let $\kappa$ and $\kappa'$ be disintegrations of $\mu$ along $g$.
If $b \in T$ is a point of continuity of both $\kappa$ and $\kappa'$, and $b \in \supp(\nu)$,
then $\kappa(b) = \kappa'(b)$.
In particular, if $\kappa$ and $\kappa'$ are continuous everywhere, they agree on $\supp(\nu)$.
\end{lemma}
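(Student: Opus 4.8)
The plan is to combine two facts that are already on the table: any two disintegrations of $\mu$ along $g$ agree off a $\nu$-null set, and $b \in \supp(\nu)$ means every open neighborhood of $b$ carries positive $\nu$-mass. The first fact supplies points at which $\kappa$ and $\kappa'$ coincide; the second guarantees such points inside every neighborhood of $b$; and continuity of the two maps at $b$ then forces $\kappa(b) = \kappa'(b)$. The only external input is that the weak topology on $\ProbMeasures(S)$ is Hausdorff, i.e.\ that bounded continuous functions separate Borel probability measures, which holds on the (metric) spaces under consideration.

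Concretely, I would begin by fixing, via the remark preceding the lemma, a set $N \subseteq T$ with $\nu(N) = 0$ and $\kappa(x) = \kappa'(x)$ for all $x \in T \setminus N$. Suppose toward a contradiction that $\kappa(b) \ne \kappa'(b)$. By Hausdorffness of the weak topology there are disjoint weak-open sets $V \ni \kappa(b)$ and $V' \ni \kappa'(b)$. Continuity of $\kappa$ and $\kappa'$ at $b$ yields open neighborhoods $U, U'$ of $b$ in $T$ with $\kappa(U) \subseteq V$ and $\kappa'(U') \subseteq V'$. Then $W = U \cap U'$ is an open neighborhood of $b$, so $\nu(W) > 0$ because $b \in \supp(\nu)$; in particular $W \setminus N \ne \emptyset$. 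Choosing any $x \in W \setminus N$ gives $\kappa(x) = \kappa'(x)$ while simultaneously $\kappa(x) \in V$ and $\kappa'(x) \in V'$, contradicting $V \cap V' = \emptyset$. Hence $\kappa(b) = \kappa'(b)$, and the final assertion follows by applying this at every $b \in \supp(\nu)$.

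I expect the one genuinely non-formal step to be the appeal to Hausdorffness of the weak topology: everything else is bookkeeping with neighborhoods and the support hypothesis. If one prefers to avoid quoting that separation property as a black box, the same contradiction can be run functional by functional—fixing a bounded continuous $\varphi$ for which $\int_S \varphi\,\dee\kappa(b) \ne \int_S\varphi\,\dee\kappa'(b)$, noting that $x \mapsto \int_S \varphi\,\dee\kappa(x)$ and its primed analogue are continuous at $b$ (since the weak topology is the initial topology rendering these functionals continuous), and deriving a contradiction from agreement at a nearby point of $W \setminus N$ via the triangle inequality. Either route isolates the metric structure of $S$ as the sole place where more than the disintegration axioms and the support condition is used.
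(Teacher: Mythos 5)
Your proposal is correct and is essentially the paper's argument: the paper also assumes $\kappa(b)\neq\kappa'(b)$, separates the two measures by a bounded continuous test function, uses continuity at $b$ to get a neighborhood on which the discrepancy stays bounded away from $0$, and contradicts $\nu$-a.e.\ agreement together with $b\in\supp(\nu)$. Your ``functional by functional'' variant at the end is literally the paper's proof, and your main route (disjoint weak-open neighborhoods from Hausdorffness) is the same argument phrased topologically, resting on the same separation property of bounded continuous functions.
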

\begin{proof}
Assume, towards
a contradiction, 
	that
	$b \in \supp(\nu)$ is a point of continuity of both $\kappa$ and $\kappa'$ such that $\kappa(b) \neq \kappa'(b)$. Then
there exists some 
bounded continuous function $f$ such that 
the function $h$ defined by
\[
		h(x) \defas
	\left | \int f(y) \  \kappa(x)(\dee y) -
	\int f(y) \  \kappa'(x)(\dee y)
	\right |
\]
is such that $h(b) > 0$.
Because $\kappa$ and $\kappa'$ are both disintegrations of $\mu$ along $g$,
we have
$h(x) = 0$ for $\nu$-almost all $x$. But
$h$ is continuous at $b$ 
because $\kappa$ and $\kappa'$ are presumed to be continuous at $b$,
and so there is an open neighborhood $N$ of $b$ such that $0 \not\in h(N)$. Finally, as $b\in\supp(\nu)$, we have
$\nu(N) > 0$, a contradiction.
\end{proof}

In particular, given a measure $\mu$ on $S$, if $\supp(\PF g \mu) = T$,
there is at most one disintegration of $\mu$ along $g$ that is continuous on all of $T$.

The question of the existence of a disintegration has a long history.  
One of the most important cases is when $S$ is a Borel space,
	i.e., there exists a measurable bijection, with a measurable inverse, to some Borel subset of the unit interval.
The Disintegration Theorem
implies that 
any measure on $S$ has a disintegration along $g \colon S \to T$
(see, e.g., \cite[Thms.~6.3 and 6.4]{MR1876169}).
Note, however, that the resulting disintegration is only defined up 
to a null set.

For countable discrete spaces, 
the notion of disintegration can be given a concrete definition in terms of the elementary notion of conditioning on positive-measure sets.

\begin{definition}[Conditioning on positive-measure sets]
Given a probability measure $\nu$ on a topological space $S$, and a Borel
set $A$ of $S$ such that $\nu(A)>0$, we write
\[
\nu^A(\pars) \defas \frac{\nu(\pars \cap A)}{\nu(A)}
\]
for \defn{the distribution $\nu$ conditioned on (the set) $A$}.
\end{definition}

A key
relationship between disintegration and conditioning on 
positive-measure subsets of a
countable discrete space $T$ is that the measurable function
$\kappa \colon T \to \ProbMeasures(S)$ defined 
by
\[
\kappa(t) = \mu^{g^{-1}(t)}(\pars),
\]
for every $t$ in the support of $\PF g \mu$, (in particular, defined $\PF g \mu$-a.e.)
is a version of the disintegration of $\mu$ along $g$.  
 
In general, there is no explicit formula like the one for countable discrete spaces.  Special cases, e.g., assuming the existence of joint or conditional densities, admit so-called Bayes' rules, but these hypotheses often exclude infinite-dimensional parameter spaces, which are typical of nonparametric Bayesian statistics.

One might hope that the conditional distribution at a point $x$ would be well-approximated by the conditional distribution given a small positive-measure set ``converging'' to $x$.   
In general, for various natural notions of convergence, this 
need
not hold, but Tjur \cite[\S 9.7]{MR595868} gave conditions for a given probability measure on a point $x$ implying that all ``reasonable'' ways of approximating the conditional distribution by positive-measure sets converging to a point $x$ yield the same conditional distribution in the limit. 
For a fixed measure, when every point possesses this property --- which we call the \emph{Tjur property} --- it follows that there is a unique continuous disintegration.
Indeed, under some mild additional conditions,
a measure satisfies Tjur's property at all points exactly when there exists a unique continuous disintegration.

Tjur's conditions are therefore necessarily rather restrictive. Fraser and Naderi \cite{MR1462405} define a less restrictive notion based on the differentiation of set functions.  Here, the 
Vitali covering property of a class of subsets with respect to the measure of interest 
allows us to explicitly construct a disintegration pointwise. (See \cref{Vitali covering property} for a definition of the Vitali covering property.)

In order to avoid pathologies, all measures in this paper will be Radon. Recall that when $S$ is a Hausdorff space and $\mu \in \ProbMeasures(S)$, $\mu$ is a \defn{Radon measure} when it is inner regular, i.e., whenever $A$ is a Borel set on $S$, then 
\[
\mu(A) = \sup \{\mu(K) \st K \subseteq A\text{ and }K\text{ is compact}\}.
\]
It is a standard result that any Borel probability measure on a separable metric space is a Radon measure.

\subsection{The Tjur property}
\label{Tjurpoints}
The following property, which we have named after Tjur, 
is equivalent to
a property described first by Tjur in an 
unpublished preprint~\cite{MR0123456} and later in a monograph
\cite[\S9.7]{MR595868}.

\begin{definition}[Tjur Property]
\label{TjurProperty}
Let $S$ and $T$ be completely regular Hausdorff spaces, 
let $\mu$ be a Radon probability measure on $S$, 
and let $g\colon S\to T$ be a measurable function.
Let $x \in T$ be a point in the support of $\PF g \mu$,
i.e., for every open neighborhood $V$ of $x$, we have $\PF g\mu(V) > 0$.
Let $\Directed(x)$ denote the set of pairs $(V,B)$ where $V$ is an open
neighborhood of $x$ and $B$ is a measurable subset of $V$ with
$\PF g\mu(B) > 0$, and write $(V,B) \preccurlyeq (V',B')$ when $V \subseteq V'$. 
Note that this relation is a partial ordering on $\Directed(x)$ and makes
$\Directed(x)$ a downward directed set.
We say that $x$ \defn{has the Tjur property (for $\mu$ along $g$)} when
the directed limit of the net $(\mu^{g^{-1}(B)} \st (V,B) \in \Directed(x) )$ exists in the weak topology on the space of probability measures.
\end{definition}

\begin{remark}
We will sometimes write $\mu^{B}_g$ for $\mu^{g^{-1}(B)}$ when it simplifies notation.
We will also write $\mu^x_g$ to denote the directed limit when it exists.
By the Portmanteau Lemma, 
\[
\mu^x_g (A) =
\lim_{(V, B) \in \Directed(x)} \mu_g^{B}(A)
\]
for all $\mu^x_g$-continuity sets $A$.
We will write $\mu^x$ when $g$ is clear from context.
\end{remark}

The usefulness of the Tjur property is demonstrated by the following result, which is a consequence
of Proposition~9.9.1, Corollary~9.9.2, and Proposition~9.10.5 of \cite{MR595868}:
\begin{lemma}
\label{maintjurlemma}
Let $S$ be a completely regular Hausdorff spaces, 
$T$ a metric space,
$\mu$ a Radon probability measure on $S$, and 
$g\colon S\to T$ be a measurable function.

Suppose that there is a $\PF g \mu$-measure one set $C$ such that every 
$x \in C$ has the Tjur property (for $\mu$ along $g$).
For every $x \in C$, let 
$\<B^x_{n}\>_{n\in\Naturals}$ be a sequence of measurable subsets of $T$ 
such that each $B^x_{n}$ is contained in the $2^{-n}$-ball around $x$
and
satisfies $\PF g\mu(B^x_{n}) > 0$.

Then the function $\kappa \colon C \to \ProbMeasures(S)$ given by
\[
\label{Tjureq}
\kappa(x) \defas
\lim_{n \to \infty}
\mu_g^{B^x_n}
\]
for every $x \in C$ (where the limit is taken in the weak topology) is 
continuous and 
can be extended to
a disintegration of $\mu$ along $g$.
\end{lemma}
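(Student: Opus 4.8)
The plan is to identify the weak limit defining $\kappa(x)$ with the Tjur directed limit $\mu^x_g$, to read off continuity (and the measurability it implies) from the dependence of that limit on the base point, and finally to verify the disintegration identity on $C$ before extending $\kappa$ across the $\nu$-null set $T \setminus C$, where $\nu \defas \PF g\mu$. First I would check that, for each $x \in C$, the displayed limit exists and equals $\mu^x_g$ regardless of the choice of $\<B^x_n\>$. Letting $V_n$ be the open $2^{-n}$-ball about $x$, the hypotheses $B^x_n \subseteq V_n$ and $\PF g\mu(B^x_n) > 0$ put each pair $(V_n, B^x_n)$ in $\Directed(x)$; the assignment $n \mapsto (V_n, B^x_n)$ is $\preccurlyeq$-decreasing because the balls shrink, and it is cofinal in the downward direction of $\Directed(x)$ because every open neighborhood of $x$ contains some $V_n$. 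Hence $(\mu_g^{B^x_n})_{n\in\Naturals}$ is a subnet of $(\mu_g^{B})_{(V,B)\in\Directed(x)}$ that is eventually $\preccurlyeq$-below any prescribed element, and the Tjur property at $x$ says this net converges weakly to $\mu^x_g$. Since such a subnet inherits the limit of the net, $\kappa(x) = \mu^x_g$, and in particular $\kappa(x)$ does not depend on the approximating sequence.

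With $\kappa(x) = \mu^x_g$ in hand, continuity of $\kappa$ on $C$ is a statement about the dependence of the Tjur limit on $x$, which I would take from Tjur's analysis (Proposition~9.9.1 and Corollary~9.9.2 of \cite{MR595868}): the map $x \mapsto \mu^x_g$ is weakly continuous on the set of Tjur points. This continuity supplies the measurability needed afterwards, since for each bounded continuous $\varphi \colon S \to \Reals$ the map $x \mapsto \int \varphi \, \dee\mu^x_g$ is then continuous, and because evaluation at a fixed Borel set is a Borel-measurable functional on $\ProbMeasures(S)$ in the weak topology, $x \mapsto \kappa(x)(A)$ is measurable on $C$ for every $A \in \Borel{S}$.

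It remains to verify that $\mu\bigl(g^{-1}(B) \cap A\bigr) = \int_B \kappa(x)(A)\,\nu(\dee x)$ for all $A \in \Borel S$ and $B \in \Borel T$, and then to extend. This is the substantive step, and I would obtain it from Proposition~9.10.5 of \cite{MR595868}: holding the Tjur property $\nu$-almost everywhere is exactly what makes the pointwise weak limits $\mu^x_g$ reassemble, through a differentiation-of-measures argument, into a version of the disintegration. Once the identity holds on $C$, I would extend $\kappa$ to all of $T$ by assigning an arbitrary fixed element of $\ProbMeasures(S)$ on $T \setminus C$; measurability is preserved, and because $\nu(C) = 1$ gives $\nu(T \setminus C) = 0$, both sides of the identity are unchanged, so the extension is a genuine disintegration of $\mu$ along $g$ that restricts to $\kappa$ on $C$. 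I expect this last identity to be the main obstacle: the Portmanteau remark only pins down $\mu^{B^x_n}_g(A)$ for $\mu^x_g$-continuity sets $A$, so passing from the pointwise limits to an integrated, Radon--Nikodym-type equation valid for every Borel $A$ cannot be done by a naive interchange of limit and integral and instead requires the differentiation/martingale machinery encapsulated in Tjur's proposition.
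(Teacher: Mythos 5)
Your proposal is correct and matches the paper's treatment: the paper gives no standalone proof of this lemma, instead deriving it directly from Proposition~9.9.1, Corollary~9.9.2, and Proposition~9.10.5 of Tjur's monograph, exactly the three results you invoke for the identification of the limit, the continuity, and the disintegration identity respectively. The connective detail you supply (cofinality of $n \mapsto (V_n, B^x_n)$ in $\Directed(x)$ and the trivial extension across the $\nu$-null set $T \setminus C$) is sound and consistent with what the paper leaves implicit.
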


Many common properties imply that a point is Tjur.
For example, every point of continuity of an absolutely continuous
distribution is a Tjur point.
Also, any isolated point mass (e.g., a point in the support of a discrete
random variable taking values in a discrete space) is a Tjur point.
On the other hand, nonisolated point masses are not necessarily Tjur points.

Tjur points sometimes exist even in nondominated settings.
Let $G$ be a Dirichlet process with an absolutely continuous
base measure $H$ on a computable metric space $S$.  That is, $G$ is a random discrete probability distribution on $S$ such that, for every finite, measurable partition $A_1,\dotsc,A_k$ of $S$, 
the probability vector $(G(A_1),\dotsc,G(A_k))$ is Dirichlet distributed with parameter $(H(A_1),\dotsc,H(A_k))$.
Conditioned on $G$, let $X$ be $G$-distributed (i.e.,
$X$ is a sample from the random distribution $G$).  Then any
point $x \in S$ in the support of $H$ is a Tjur point, yet there does not exist a
conditional density of $G$ given $X$.

In \cref{tjurconverse,Tjur = continuous disintegration on measure one set}
we will see that, under some regularity conditions, 
the existence of an everywhere continuous disintegration is equivalent to every point being Tjur.
However, even when every point is Tjur, and thus there exists a continuous disintegration,
the resulting disintegration may not be computable.  In fact, one of
the main
constructions of \cite{AFR10}
is an example of a conditional distribution for which 
every point is Tjur
and yet no disintegration is a
computable map, even on large measure set.
We will use a related
construction in Section~\ref{continuous}.

\begin{proposition}[{\cite[Prop.~9.14.2]{MR595868}}]
\label{tjurconverse}
Let $S$ and $T$ be completely regular Hausdorff, 
let $\mu$ be a Radon probability measure
on $S$,
let $g \colon S \to T$ be a continuous function,
and let $\nu = \PF g \mu$ be the pushforward.
Suppose 
\[
\xi\colon C &\to \ProbMeasures(S)
\]
is
a continuous mapping,
where $C$ is a subset of $\supp(\nu)$ such that $C$ is a $\nu$-measure one set.
Then the following two conditions are equivalent.
\begin{enumerate}
\item[(i)] 
	For all $x\in C$,
	the conditional distribution $\mu^x_g$ is defined 
	and 
	$\mu^x_g = \xi(x)$.
\item[(ii)] 
        For all $x\in C$, we have $\supp(\xi(x)) \subseteq g^{-1}(x)$,
        and 	
        $\mu = \int_C \xi(x)\, \nu(\dee x)$.
\end{enumerate}
\end{proposition}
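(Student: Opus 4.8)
The plan is to prove the two implications separately, after reformulating~(ii). The key observation is that the two clauses of~(ii) together say precisely that $\xi$ is a disintegration of $\mu$ along $g$ that is fibrewise concentrated: once $\supp(\xi(x)) \subseteq g^{-1}(x)$ for every $x \in C$, applying $\mu = \int_C \xi(x)\, \nu(\dee x)$ to a set of the form $A \cap g^{-1}(B)$ recovers the disintegration identity \cref{disinteq}, since $\xi(x)\bigl(A \cap g^{-1}(B)\bigr)$ equals $\xi(x)(A)$ for $x \in B$ and vanishes for $x \notin B$. I will therefore pass freely between~(ii) and the statement that $\xi$ is a fibrewise disintegration.

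For (ii) $\Rightarrow$ (i), fix $x_0 \in C$ and a bounded continuous $\varphi \colon S \to \Reals$, and put $\Phi(x) \defas \int_S \varphi\, \dee\xi(x)$, which is continuous on $C$ since $\xi$ is weakly continuous. Using the disintegration identity I will express the conditional measure over a positive-measure $B$ near $x_0$ as a $\nu$-average, namely $\int_S \varphi\, \dee\mu_g^{B} = \nu(B)^{-1}\int_B \Phi\, \dee\nu$. Given $\varepsilon>0$, continuity of $\Phi$ yields an open $V_0 \ni x_0$ with $|\Phi-\Phi(x_0)|<\varepsilon$ on $V_0 \cap C$; since $\nu(C)=1$, for every $(V,B)\in\Directed(x_0)$ with $V\subseteq V_0$ this average is within $\varepsilon$ of $\Phi(x_0)$. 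Hence the net $\bigl(\mu_g^{B}\bigr)$ converges weakly to $\xi(x_0)$, so $x_0$ is Tjur with $\mu^{x_0}_g=\xi(x_0)$.

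For (i) $\Rightarrow$ (ii) I first treat the support clause. If $g(y)\neq x$, Hausdorffness of $T$ gives disjoint open $W\ni g(y)$ and $V\ni x$; for $(V',B)\in\Directed(x)$ with $V'\subseteq V$ the measure $\mu_g^{B}$ is carried by the set $g^{-1}(V)$, which is disjoint from the open neighbourhood $g^{-1}(W)$ of $y$, so $\mu_g^{B}(g^{-1}(W))=0$ eventually and the Portmanteau inequality for open sets gives $\mu^x_g(g^{-1}(W))=0$; thus $y\notin\supp(\mu^x_g)=\supp(\xi(x))$, proving $\supp(\xi(x))\subseteq g^{-1}(x)$. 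The remaining identity $\mu=\int_C\xi\,\dee\nu$ is the crux. Testing against bounded continuous $\varphi$ it reduces to $\int_C\Phi\,\dee\nu=\int_S\varphi\,\dee\mu$. I will introduce the finite signed measure $m(E)\defas\int_{g^{-1}(E)}\varphi\,\dee\mu$ on $T$; since $|m|\le\|\varphi\|_\infty\,\nu$ we have $m\ll\nu$, so $m=\rho\,\nu$ for a Radon--Nikodym derivative $\rho$, and $\int_S\varphi\,\dee\mu=m(T)=\int_C\rho\,\dee\nu$. The Tjur limit identifies $\Phi(x)=\lim_{(V,B)}m(B)/\nu(B)$, and the \emph{uniform} form of the Tjur property---the estimate $|m(B)-\Phi(x)\nu(B)|<\varepsilon\,\nu(B)$ for \emph{all} positive-measure $B$ inside a small enough neighbourhood of $x$---forces $|\rho-\Phi(x)|\le\varepsilon$ $\nu$-a.e.\ on a neighbourhood of each $x\in C$ (otherwise a bad subset $B$ would violate the estimate). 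Combined with continuity of $\Phi$, this shows the bad set $\{|\rho-\Phi|>2\varepsilon\}\cap C$ meets each such neighbourhood in a null set, and I will globalize to a genuine null set using inner regularity of the Radon measure $\nu$, covering a compact subset of the bad set by finitely many neighbourhoods. Letting $\varepsilon\to0$ gives $\rho=\Phi$ $\nu$-a.e., whence $\int_C\Phi\,\dee\nu=\int_S\varphi\,\dee\mu$ for all bounded continuous $\varphi$; as $\mu$ and $\int_C\xi\,\dee\nu$ are both Radon probability measures, they coincide.

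The main obstacle is exactly this last step: upgrading the neighbourhood-local agreement $\rho=\Phi$ to a global $\nu$-a.e.\ statement with no metric or countable base available on $T$. The uniform-over-subsets strength of the Tjur property is what makes the local comparison succeed, while Radon inner regularity is what turns an uncountable open cover into a finite one on compacta. When $T$ is metrizable one may shortcut this step entirely: \cref{maintjurlemma} manufactures a disintegration out of the everywhere-defined Tjur limit, and \cref{contdistareunique} identifies that disintegration with the continuous map $\xi$ on $\supp(\nu)\supseteq C$, immediately yielding $\mu=\int_C\xi\,\dee\nu$.
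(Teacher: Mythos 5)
The paper does not prove this proposition at all: it is quoted verbatim from Tjur's monograph (Prop.~9.14.2 of \cite{MR595868}) and used as a black box, so there is no in-paper argument to compare against. Your proof is, as far as I can check, a correct self-contained reconstruction, and the two directions are handled with the right ideas. For (ii)$\Rightarrow$(i) you correctly observe that fibrewise concentration turns the mixture identity into the disintegration identity, so that $\int \varphi\,\dee\mu_g^B$ is a $\nu$-average of the continuous function $\Phi$ over $B$, and continuity of $\Phi$ at $x_0$ makes the whole net converge. For (i)$\Rightarrow$(ii) the two essential points are both present: first, that the Tjur net is indexed by \emph{all} positive-measure $B$ inside a shrinking neighbourhood, which is exactly what lets you pin down the Radon--Nikodym derivative $\rho$ of $m(E)=\int_{g^{-1}(E)}\varphi\,\dee\mu$ against $\nu$ up to $\varepsilon$ on a whole neighbourhood (a single convergent sequence of sets would not suffice); and second, that the passage from ``null on a neighbourhood of each point of $C$'' to ``globally null'' genuinely needs inner regularity of $\nu$ plus a finite subcover on compacta, since $T$ carries no countability assumption. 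The only soft spot is the closing sentence of (i)$\Rightarrow$(ii): agreement of $\mu$ and $\lambda := \int_C \xi(x)\,\nu(\dee x)$ on all bounded continuous functions identifies the two measures only if $\lambda$ is itself Radon (or $\tau$-smooth), and you assert this rather than prove it; this is harmless in the metric setting in which the paper actually applies the proposition, and is part of Tjur's standing framework, but in a bare completely regular Hausdorff space it deserves a word (e.g.\ via inner regularity of the mixture, or by restricting the claim to the weak-integral reading of $\mu=\int_C\xi\,\dee\nu$). Your final remark, that for metrizable $T$ one can shortcut the hard direction using \cref{maintjurlemma} and \cref{contdistareunique}, is correct and is essentially how the paper itself consumes the result.
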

We now give conditions for when there is a continuous disintegration on a measure one set.
\begin{proposition}
\label{Tjur = continuous disintegration on measure one set}
Let $S$ and $T$ be completely regular Hausdorff, with $T$ second countable,
let $\mu$ be a Radon probability measure on $S$, 
let $g\colon S \to T$ be a continuous function,
and let $\nu \defas \PF g \mu$.
Further suppose $C$ is a $\nu$-measure one set.
The following are equivalent.
\begin{enumerate}
	\item Every point in $C$ is a Tjur point of $\mu$ along $g$.
	\item $C \subseteq \supp(\nu)$ and there exists a disintegration $\kappa\colon T \to \ProbMeasures(S)$ of $\mu$ along $g$ whose restriction to $C$ is continuous.
\end{enumerate}
\end{proposition}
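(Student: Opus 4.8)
The plan is to prove the two implications separately. The direction $(1)\Rightarrow(2)$ is essentially a repackaging of \cref{maintjurlemma}, while $(2)\Rightarrow(1)$ is the substantive direction, which I would deduce from \cref{tjurconverse} after checking that a disintegration continuous on $C$ is automatically \emph{proper} there, in the sense that $\supp(\kappa(x))\subseteq g^{-1}(x)$ for every $x\in C$.

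For $(1)\Rightarrow(2)$: first note that, by \cref{TjurProperty}, being a Tjur point presupposes membership in $\supp(\nu)$, so $C\subseteq\supp(\nu)$ is immediate. Next observe that $T$, being second countable and completely regular Hausdorff, is regular Hausdorff and hence metrizable by Urysohn's metrization theorem; fix a compatible metric. Since every ball about a point of $C\subseteq\supp(\nu)$ has positive $\nu$-measure, I can take $B^x_n$ to be the ball of radius $2^{-n}$ about $x$, which is measurable with $\nu(B^x_n)>0$. \cref{maintjurlemma} then produces the map $\kappa(x)=\lim_n \mu^{B^x_n}_g$, which is continuous on $C$ and extends to a disintegration of $\mu$ along $g$; this is exactly condition (2).

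For $(2)\Rightarrow(1)$: let $\kappa$ be a disintegration continuous on $C$, and set $\xi\defas\kappa|_C$. I would verify hypothesis (ii) of \cref{tjurconverse} for $\xi$. The identity $\mu=\int_C\xi(x)\,\nu(\dee x)$ follows from \eqref{disinteq} by taking $B=T$ and using $\nu(C)=1$. The support condition is the crux. Applying \eqref{disinteq} with $A=g^{-1}(U)$ for a Borel $U\subseteq T$ gives $\int_B \kappa(x)(g^{-1}(U))\,\nu(\dee x)=\nu(B\cap U)$ for all Borel $B$, whence $\kappa(x)(g^{-1}(U))=\Ind_U(x)$ for $\nu$-a.e.\ $x$. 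Ranging $U$ over a countable base of $T$ and intersecting the resulting conull sets, I obtain a conull $N\subseteq C$ on which $\kappa(x)$ is concentrated on $g^{-1}(x)$. To upgrade this to \emph{all} of $C$, I would use continuity: for $x\in C$ pick $y_k\in N$ with $y_k\to x$ (possible since $x\in\supp(\nu)$ and $N$ is conull), so that $\kappa(y_k)\to\kappa(x)$ weakly; if some $s\in\supp(\kappa(x))$ had $g(s)\neq x$, then separating $g(s)$ and $x$ by disjoint open sets $U\ni g(s)$ and $W\ni x$ and applying the Portmanteau inequality for the open set $g^{-1}(U)$ would force $\kappa(x)(g^{-1}(U))\le\liminf_k\kappa(y_k)(g^{-1}(U))=0$, contradicting $s\in\supp(\kappa(x))$ (here $\kappa(y_k)(g^{-1}(U))=0$ for large $k$ because then $y_k\in W$, so $g^{-1}(U)\cap g^{-1}(y_k)=\emptyset$). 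With (ii) established, \cref{tjurconverse} yields that $\mu^x_g$ is defined and equals $\kappa(x)$ for every $x\in C$; in particular every point of $C$ is Tjur, which is (1).

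The main obstacle is exactly this a.e.-to-everywhere upgrade of the properness condition: the disintegration identity only controls $\kappa(x)$ off a $\nu$-null set, and it is the continuity hypothesis, via the Portmanteau argument, that pins down the behavior at the $\nu$-null exceptional points of $C$. An alternative route that avoids invoking \cref{tjurconverse} is to establish convergence of the defining net directly: writing $\Phi_f(y)\defas\int_S f\,\dee\kappa(y)$ for bounded continuous $f$, one extends \eqref{disinteq} from indicators to bounded measurable integrands to get $\int_S f\,\dee\mu^B_g=\nu(B)^{-1}\int_B\Phi_f\,\dee\nu$, and since $\Phi_f$ is continuous at $x\in C$ and $C$ is conull, this average converges to $\Phi_f(x)=\int_S f\,\dee\kappa(x)$ as $(V,B)$ shrinks to $x$, giving weak convergence of $\mu^B_g$ to $\kappa(x)$ and hence the Tjur property at $x$ directly.
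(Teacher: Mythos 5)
Your proof is correct, and its skeleton matches the paper's: $(1)\Rightarrow(2)$ via \cref{maintjurlemma} (your observation that second countability plus complete regularity gives metrizability of $T$, so that \cref{maintjurlemma} applies, is a detail the paper leaves implicit), and $(2)\Rightarrow(1)$ by verifying hypothesis (ii) of \cref{tjurconverse}. Where you genuinely diverge is in establishing $\supp(\kappa(x))\subseteq g^{-1}(x)$ for \emph{every} $x\in C$. The paper runs a single contradiction argument: assuming some $c\in C$ has $\kappa(c)(g^{-1}(c))<1$, it takes a decreasing sequence of closed neighborhoods $N_i$ with $\bigcap_i N_i=\{c\}$, uses upper semicontinuity of $c'\mapsto\kappa(c')(g^{-1}(N_j))$ on $C$ (Portmanteau for the closed set $g^{-1}(N_j)$) to spread the defect over an open set $N''$ of positive $\nu$-measure, and contradicts the disintegration identity via $\nu(N'')>\int_{N''}\kappa(y)(g^{-1}(N''))\,\nu(\dee y)=\nu(N'')$. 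You instead first extract the a.e.\ statement directly from \eqref{disinteq} with $A=g^{-1}(U)$ over a countable base, then upgrade to all of $C$ by sequential continuity and the liminf Portmanteau inequality for open sets. Both arguments use the same ingredients (second countability of $T$, $C\subseteq\supp(\nu)$, and one-sided semicontinuity of measures of closed/open sets under weak convergence), so neither is materially simpler, though your two-step version makes explicit exactly where continuity is needed. Your closing alternative is the more substantive departure: showing $\int f\,\dee\mu^{g^{-1}(B)}=\nu(B)^{-1}\int_B\Phi_f\,\dee\nu\to\Phi_f(x)$ as $(V,B)$ shrinks in $\Directed(x)$, using only continuity of $\Phi_f|_C$ at $x$ and $\nu(C)=1$, proves weak convergence of the defining net directly and hence the Tjur property without invoking \cref{tjurconverse} or the properness condition at all; that route is shorter and more self-contained than either of the other two.
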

\begin{proof}
Assume (1). Then the fact that there is a continuous disintegration follows from \cref{maintjurlemma}. 
 
Next assume that (2) holds and that $\kappa$ is a continuous disintegration of $\mu$ along $g$.
As $g$ is continuous, by Proposition~\ref{tjurconverse},
it suffices to verify (ii), taking $\xi = \kappa$.
First note, by the definition of disintegration, 
	\[
		\mu\bigl(g^{-1}(B) \cap A\bigr) = \int_B \kappa(x)(A) \  \PF g \mu (\dee x),
	\]
for all $A \in \Borel{S}$
and $B \in \Borel{T}$.
But as $\nu(C) = 1$, for all 
	$A \in \Borel{S}$ we have 
\[
		\mu(A) = \mu\bigl(g^{-1}(C) \cap A\bigr) = 
	\int_C \xi(y)(A)\  \nu(\dee y).
\]
In particular, $\mu =  \int_C \xi(y)\, \nu(\dee y)$, as required.
Now let $C^* = \{c \in C\st \mu^c(g^{-1}(c)) = 1\}$. It suffices to show $C^* = C$, as $g$ is continuous.
 
Assume, 
towards
a contradiction, 
that there is a $c \in C\setminus C^*$, i.e., such that $\mu^{c}(g^{-1}(c)) < 1$. As $T$ is second countable, there is a countable decreasing collection of closed neighborhoods $\<N_i\>_{i \in \w}$ of $c$ such that $\{c\} = \bigcap_{i \in \w}N_i$. 
It follows that $\<g^{-1}(N_i)\>_{i \in \w}$ is a decreasing countable sequence of sets whose intersection has $\mu^c$-measure strictly less than one, 
and so, for some $j$, it holds that $c \in N_j \subseteq T$ and $\mu^{c}\bigl(g^{-1}(N_j)\bigr) < 1$.
But as $N_j$ is closed, the map which takes $c'$ and returns $\mu^{c'}\bigl(g^{-1}(N_j)\bigr)$ is upper semi-continuous. Therefore there must also be an open set $N'$ with $c \in N' \subseteq T$ such that 
for all $c' \in N' \cap C$ 
we have
$\mu^{c'}(g^{-1}(N_j)) < 1$. Let $N^*$ be the interior of $N_j$ which is non-empty as $N_j$ is a closed neighborhood of $c$. Then 
\[N'' \defas N' \cap N^* \cap C \subseteq C \setminus C^*.\] 

Because $N'' \subseteq \supp(\nu)$ is a non-empty open set, $\nu(N'') > 0$.  But then
\[
\nu(N'') 
= \int_{N''} \nu(\dee y)
> \int_{N''} \mu^y\bigl( g^{-1}(N'') \bigr) \ \nu(\dee y) = \mu (g^{-1}(N'')) = \nu(N''),
\]
a contradiction.
\end{proof}

Let $\pi_2 : S \times T \to T$ denote the projection onto $T$. 
We now define the space of measures we will use to study the computability of disintegration.

\begin{definition}\label{ACD}
For second countable regular Hausdorff spaces $S$ and $T$,
let $\ACD_{S, T} \subseteq \ProbMeasures(S \times T)$ be the subset consisting of those measures
$\mu$ such that
\begin{enumerate}
\item the pushforward of $\mu$ along $\pi_2$, has full support, i.e., $\supp (\PF {\pi_2} \mu) = T$, and
\item $\mu$ admits a (necessarily unique, by \cref{contdistareunique})
continuous disintegration along the projection map $\pi_2 \colon S\times T \to T$.
\end{enumerate}
\end{definition}

It follows from
\cref{Tjur = continuous disintegration on measure one set} 
that $\mu \in \ACD_{S, T}$ if and only if every point of $T$ is a Tjur point for $\mu$ along $\pi_2$. 

Because we are conditioning on a projection map,
a disintegration can be identified with a
continuous maps $\ContFuncs(T, \ProbMeasures(S))$.
Let $\iota_1\colon  \ProbMeasures(S \times T) \to \ProbMeasures(S)$ be the map defined by
$\iota_1(\mu)(A) = \mu(A \times T)$
for all $\mu \in \ProbMeasures(S\times T)$ and $A \in \Borel{S}$. 
It is easy to check that $\iota_1$ is a continuous map.  Let $\DD_{S, T}\colon \ACD_{S, T} \rightarrow \ContFuncs(T, \ProbMeasures(S))$ be
defined by $\DD_{S,T}(\mu) = \iota_1 \circ \kappa$, where $\kappa \colon T \to \ProbMeasures(S\times T)$ is the (unique) continuous disintegration of $\mu$ along $T$. 
The map $\iota_1$ is injective on the image of $\ACD_{S, T}$ under $\DD_{S,T}$, and so when no confusion can arise, we will also call $\DD_{S, T}(\mu)$ the disintegration of $\mu$ when $\mu \in \ACD_{S,T}$.

One often requires that a disintegration be merely a.e.\ continuous (rather
than continuous everywhere). However, for any measure admitting 
an a.e.\ continuous disintegration, there is a $G_\delta$ subset of $T$ of
measure one on which it is continuous everywhere. 
One could therefore consider all measures on $S \times T$ such that the set of Tjur points contains a $G_\delta$-subset of $T$ of measure one. However one would then have to define the domain of the disintegration map to be the set of pairs consisting of a measure on $S \times T$ along with a $G_\delta$-subset of $T$. In this context the fundamental results of this paper should still hold with essentially the same proofs;
however,
the notational complexity would also be greatly increased.

\subsection{Weaker conditions than the Tjur property}
\label{weakerFN}

In Section \ref{specific} we will consider a distribution which does not admit a continuous disintegration and is thus not in $\ACD_{S, T}$. To make sense of this we need a weaker notion of disintegration than that given by Tjur. These definitions are based on those from Fraser and Naderi \cite{MR1462405}.

\begin{definition}
\label{converging regularly}
Suppose 
$T$ is a separable metric space
and 
$\nu$ is a probability measure on $T$.
A sequence of sets $\<E_n\>_{n \in \w}\in \Borel{T}$ is said to \defn{converge regularly} to $x$ (with respect to $\nu$) 
if there is a sequence of closed balls $\<B_n\>_{n \in \w}$ 
of respective radii $\<r_n\>_{n \in \w}$ such that
\begin{enumerate}
\item $\lim_{n \to \infty} r_n = 0$,
\item $x \in E_n \subseteq B_n$ for all $n$, and
\item there is an $\alpha > 0$ such that  $\mu(E_n) \geq \alpha \cdot \mu(B_n)$ for all $n$.
\end{enumerate} 
\end{definition}

Intuitively, a sequence of sets converges regularly to $x$ if it is ``close'' to a decreasing sequence of closed
balls whose intersection is $\{x\}$.

\begin{definition}
For $F \in \Borel{T}$,
a class $W \subseteq \Borel{T}$ is a \defn{Vitali cover of $F$} (with respect to $\nu$)
when, for all $x \in F$, 
there is a sequence of elements of $W$ that converges regularly to $x$ with respect to $\nu$.
\end{definition}

\begin{definition}
\label{Vitali covering property}
A class $V \subseteq \Borel{T}$ has the \defn{Vitali covering property} (with respect to $\nu$)
when, for every $F \in \Borel{T}$, (1) there is a Vitali cover $W \subseteq V$ of $F$ with respect to $\nu$ 
and (2) for every Vitali cover $W \subseteq V$ of $F$ with respect to $\nu$,
there is a collection $\{F_n \st n \in \w\}\subseteq W$ of disjoint sets 
such that $\mu(F  \setminus \bigcup_{n \in \w} F_n) = 0$.
\end{definition}
These definitions allow us to define a notion of disintegration at points:
\begin{proposition}[{\cite[Thm.~2]{MR1462405}}]
\label{Fraser-disintegration thm}
Let $g\colon S \to T$, let $\mu \in \ProbMeasures(S)$,
and let $\nu = \PF g \mu$. Suppose that $V\subseteq \Borel{T}$ has the Vitali covering property with respect to $\nu$. 
\begin{itemize}
	\item[(i)] 
	         For every $A \in \Borel{S}$ there exists $C_A \in \Borel{T}$ with $\nu(C_A) = 1$ such that, 
	                   for every $x\in C_A$
		                and every sequence $\<E_n\>_{n \in \w}\subseteq V$ that converges regularly to $x$, the limit 
\[
\lim_{n \to \infty} \frac{\mu(g^{-1}(E_n) \cap A)}{\mu(g^{-1}(E_n))}
\]
exists 
and
is independent of the choice of $\<E_n\>_{n\in\w}$.
Let $\mu_{g,V}^x (A)$ denote this limit when it exists and define $\mu_{g,V}^x (A) = \mu(A)$ otherwise.
\item[(ii)] The map $x \mapsto \mu_{g,V}^x(A)$ is measurable with respect to the $\nu$-completion of $\Borel{T}$ and, for each set $E$ in the $\nu$-completion of $\Borel{T}$,
\[
\mu(g^{-1}(E) \cap A) = \int_E \mu_{g,V}^x(A) \ \nu(\dee x).
\]
\end{itemize}
\end{proposition}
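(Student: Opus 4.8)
The plan is to reduce the statement to a Radon--Nikodym derivative together with a Vitali differentiation theorem. Fix $A \in \Borel{S}$ and define a finite Borel measure $\nu_A$ on $T$ by $\nu_A(E) \defas \mu(g^{-1}(E) \cap A)$ for $E \in \Borel{T}$. Since preimages of disjoint sets are disjoint, $\nu_A$ is countably additive, and $\nu_A(E) \le \mu(g^{-1}(E)) = \nu(E)$, so $\nu_A \ll \nu$. By the Radon--Nikodym theorem there is a Borel function $f_A \defas \frac{\dee \nu_A}{\dee \nu}$ with $0 \le f_A \le 1$ for $\nu$-almost every point, satisfying $\nu_A(E) = \int_E f_A \, \dee \nu$ for all $E \in \Borel{T}$. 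This already supplies the integrand and the identity in (ii); the entire content of the proposition is therefore to show that the ratios converge to $f_A$, i.e. that for $\nu$-almost every $x$ and every $\<E_n\>_{n \in \w} \subseteq V$ converging regularly to $x$,
\[
\lim_{n \to \infty} \frac{\mu(g^{-1}(E_n) \cap A)}{\mu(g^{-1}(E_n))} = \lim_{n \to \infty}\frac{\nu_A(E_n)}{\nu(E_n)} = f_A(x).
\]

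I would prove this convergence as a two-sided density estimate. For $x \in T$, let $\overline{D}_A(x)$ and $\underline{D}_A(x)$ be the supremum and infimum, over all sequences from $V$ converging regularly to $x$, of the $\limsup$ and $\liminf$ of the ratios $\nu_A(E_n)/\nu(E_n)$. Because $\underline{D}_A(x) \le \overline{D}_A(x)$ always, and because the displayed limit exists and is independent of the sequence exactly when $\underline{D}_A(x) = \overline{D}_A(x)$, it suffices to prove $\overline{D}_A \le f_A$ and $\underline{D}_A \ge f_A$ for $\nu$-almost every $x$. For the upper inequality, fix a rational $q$ and set $\mathcal{W}_q \defas \{E \in V \st \nu_A(E) > q\,\nu(E)\} \subseteq V$. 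At any point where $\overline{D}_A > q$, a witnessing regularly converging sequence has ratio exceeding $q$ infinitely often, and passing to that subsequence keeps the radii tending to $0$ and the regularity constant $\alpha$ fixed; hence $\mathcal{W}_q$ is a Vitali cover of $P_q \defas \{f_A < q < \overline{D}_A\}$. Applying part~(2) of the Vitali covering property I extract a countable disjoint family $\{F_k\} \subseteq \mathcal{W}_q$ covering $P_q$ up to a $\nu$-null set; summing $\nu_A(F_k) > q\,\nu(F_k)$ and using $\nu_A \ll \nu$ to discard the leftover yields $\nu_A(P_q) \ge q\,\nu(P_q)$. But $f_A < q$ on $P_q$ gives $\nu_A(P_q) = \int_{P_q} f_A\,\dee\nu \le q\,\nu(P_q)$, with strict inequality when $\nu(P_q) > 0$; hence $\nu(P_q) = 0$. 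Taking the union over rational $q$ gives $\overline{D}_A \le f_A$ almost everywhere, and the lower inequality is symmetric, using the low-ratio subclass $\{E \in V \st \nu_A(E) < q\,\nu(E)\}$.

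The hard part will be making this density estimate fully rigorous, and in particular handling the measurability of the exceptional sets: the level sets $\{\overline{D}_A > q\}$ (and hence $P_q$) are defined through an uncountable supremum and need not be Borel a priori, whereas the Vitali covering property is stated only for $F \in \Borel{T}$. I would address this by applying the property to a measurable hull of the bad set (equivalently, by phrasing the estimates in terms of $\nu$-outer measure), and by being careful that the $\nu$-null remainder left by the Vitali extraction is also $\nu_A$-null, which is exactly where $\nu_A \ll \nu$ is used. This differentiation estimate is the heart of the argument and the place where the Vitali covering property does all the work, playing the role that shrinking balls play in the classical Lebesgue differentiation theorem.

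Finally I would assemble the pieces. Set $C_A \defas \{x \in T \st \overline{D}_A(x) = \underline{D}_A(x) = f_A(x)\}$; the estimate above gives $\nu(C_A) = 1$, and at each $x \in C_A$ every regularly converging sequence produces ratios tending to $f_A(x)$, so the limit exists and is independent of the chosen sequence. This is (i), with $\mu_{g,V}^x(A) = f_A(x)$ on $C_A$ and $\mu(A)$ off it. For (ii), the map $x \mapsto \mu_{g,V}^x(A)$ agrees with the Borel function $f_A$ off the $\nu$-null set $T \setminus C_A$, hence is measurable with respect to the $\nu$-completion of $\Borel{T}$; and for Borel $E$ the identity $\mu(g^{-1}(E) \cap A) = \nu_A(E) = \int_E f_A \,\dee\nu = \int_E \mu_{g,V}^x(A)\,\nu(\dee x)$ is immediate. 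To extend it to a set $E$ in the $\nu$-completion, write $E = E_0 \cup Z$ with $E_0$ Borel and $Z$ contained in a $\nu$-null Borel set $Z'$; then $\mu(g^{-1}(Z')) = \nu(Z') = 0$ leaves both sides unchanged when passing between $E$ and $E_0$, completing the proof.
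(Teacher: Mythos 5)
The paper does not prove this proposition: it is imported verbatim from Fraser and Naderi \cite{MR1462405}, so there is no internal argument to compare against. Your architecture --- produce $f_A = \dee\nu_A/\dee\nu$ by Radon--Nikodym from the dominated measure $\nu_A(E) = \mu(g^{-1}(E)\cap A) \le \nu(E)$, then prove a Vitali differentiation theorem identifying the limit of the ratios with $f_A(x)$ at $\nu$-almost every $x$ via upper and lower derivates --- is the right one, and is essentially how the cited source proceeds. Part (ii) and the bookkeeping with completions at the end are fine.

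There is, however, a concrete gap in the density estimate, at the step ``summing $\nu_A(F_k) > q\,\nu(F_k)$ and using $\nu_A \ll \nu$ to discard the leftover yields $\nu_A(P_q) \ge q\,\nu(P_q)$.'' Absolute continuity handles the \emph{leftover} $P_q \setminus \bigcup_k F_k$, which is $\nu$-null and hence $\nu_A$-null, but it does nothing about the \emph{overshoot} $\bigcup_k F_k \setminus P_q$: the disjoint family supplied by the Vitali covering property covers $P_q$ up to a null set but its union may carry far more mass than $P_q$ itself, so summing the inequalities only yields $\nu_A\bigl(\bigcup_k F_k\bigr) \ge q\,\nu(P_q)$, not $\nu_A(P_q) \ge q\,\nu(P_q)$. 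The standard repair is to localize the cover before extracting the disjoint family: fix a Borel hull $P_q^*$ of $P_q$ and, by outer regularity of $\nu$ (available since $T$ is a separable metric space and $\nu$ is a Borel probability measure), an open $U \supseteq P_q^*$ with $\nu(U \setminus P_q^*) < \varepsilon$; because the tail of any sequence converging regularly to $x \in P_q$ eventually lies in $U$ (the enclosing closed balls contain $x$ and have radii tending to $0$), the subclass $\{E \in \mathcal{W}_q \st E \subseteq U\}$ is still a Vitali cover of $P_q$, and then $\nu_A\bigl(\bigcup_k F_k\bigr) \le \nu_A(U) \le \nu_A(P_q^*) + \varepsilon$, after which $\varepsilon \to 0$ gives the inequality you want; the symmetric localization (this time controlling $\nu\bigl(\bigcup_k F_k\bigr)$ from above) is needed for the lower derivate as well. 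With this insertion, together with the measurable-hull device you already flag for the possibly non-Borel level sets of $\overline{D}_A$ and $\underline{D}_A$, the argument goes through.
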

\begin{remark}
By \cref{maintjurlemma}, 
note that if $x \in T$ is a Tjur point for $\mu$ along $g$, then $\mu_{g,V}^x = \mu^x_g$, and so the left hand side does not depend on $V$.
\end{remark}
\begin{remark}
	\label{meas-map-rmk}
Because $S$ is assumed to be a 
separable metric space, we may arrange things 
so that $A \mapsto \mu_{g,V}^x(A)$ is a probability measure for every $x$ and that $x \mapsto \mu_{g,V}^x$ is a measurable map.
(In other words, $(x, A) \mapsto \mu_{g,V}^x(A)$ is a probability kernel.)
\end{remark}

The notion of disintegration given by \cref{Fraser-disintegration thm} relies heavily on the metric structure of the underlying set. Fortunately, though, there are many situations when there are natural collections with the Vitali property. 
For example, the class of closed sets in the Lebesgue completion of the Borel subsets of $\Reals^k$ has the Vitali covering property with respect to Lebesgue measure (see \cite[p.~304]{MR1369805} for references to proofs).
For our purposes, the following result suffices.
\begin{proposition}
\label{ultrametric-has-Fraser}
If $T$ is an ultrametric space then the collection of open balls has the Vitali covering property with respect to any measure on $T$. 
\end{proposition}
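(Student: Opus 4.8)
The plan is to verify the two clauses of \cref{Vitali covering property} separately, exploiting the one structural feature that makes ultrametric balls tractable: any two open balls are either disjoint or nested, and every open ball is clopen. I would record these first. If $B(x,r)$ and $B(y,s)$ meet, say with $r \le s$, pick a common point $z$; then for any $w \in B(x,r)$ the ultrametric inequality gives $d(w,y) \le \max\bigl(d(w,x), d(x,z), d(z,y)\bigr) < s$, so $B(x,r) \subseteq B(y,s)$. Hence the members of any subfamily of open balls form a forest under inclusion, and two that intersect are comparable. A similar one-line computation shows each open ball is also closed, so a point lying outside a finite union of open balls has positive distance to it. Throughout I take $\nu$ to be a finite Borel measure on the (separable) ultrametric space $T$.

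For clause (1) I would show that the full collection $V$ of open balls is already a Vitali cover of any $F$, i.e.\ that every $x \in T$ admits a regularly converging sequence of open balls. Fixing $x$, the pushforward of $\nu$ under $y \mapsto d(x,y)$ is a finite measure on $[0,\infty)$, so it has at most countably many atoms; let $D_x$ be the countable set of radii $r$ with $\nu(\overline{B}(x,r)) > \nu(B(x,r))$. Choosing $r_n \in (0,2^{-n}) \setminus D_x$ and setting $E_n = B(x,r_n)$ with witnessing closed balls $B_n = \overline{B}(x,r_n)$, conditions (1)--(2) of \cref{converging regularly} are immediate, while $\nu(E_n) = \nu(B_n)$ makes condition (3) hold with $\alpha = 1$ (and vacuously so when $\nu(E_n)=0$, which covers points off the support).

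The substance is clause (2): given any Vitali cover $W \subseteq V$ of $F$, I must extract a countable disjoint subfamily covering $\nu$-almost all of $F$. I would run a greedy selection, but measured by \emph{mass} rather than radius, to sidestep the main obstacle, namely that for a general measure a ball of large diameter can carry arbitrarily little mass, so the diameters of greedily chosen balls need not shrink. Restricting to $W_1 = \{B \in W : \operatorname{diam} B \le 1\}$ (still a Vitali cover of $F$, since regular convergence only uses balls of arbitrarily small radius), set $s_n = \sup\{\nu(B) : B \in W_1,\ B \cap B_i = \emptyset \text{ for } i \le n\}$ and choose $B_{n+1}$ disjoint from $B_1,\dots,B_n$ with $\nu(B_{n+1}) > s_n/2$. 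Disjointness and $\nu(T) < \infty$ give $\sum_n \nu(B_n) \le \nu(T) < \infty$, hence $\nu(B_n) \to 0$ and therefore $s_n \to 0$; if at some stage no admissible ball remains, the argument below applies with an empty tail.

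To see that $\{B_n\}$ covers $F$ mod $\nu$-null, fix $\epsilon > 0$, pick $N$ with $\sum_{n>N}\nu(B_n) < \epsilon$, and set $R_N = \bigcup_{i\le N} B_i$, a clopen set. Let $Z$ be the union of all null open balls; by separability $T$ is Lindel\"of, so $Z$ is a countable union of null balls and $\nu(Z)=0$. For $x \in F \setminus (\bigcup_n B_n \cup Z)$ I choose $B \in W_1$ with $x \in B$ and $\operatorname{diam} B < d(x,R_N)$; then $\nu(B) > 0$ (as $x \notin Z$) and $B$ is disjoint from $R_N$. Were $B$ disjoint from every $B_n$, it would satisfy $\nu(B) \le s_n \to 0$, a contradiction; so $B$ meets some $B_m$ with $m > N$, and taking $m$ least, nesting together with $x \in B \setminus B_m$ forces $B_m \subseteq B$, while disjointness from $B_1,\dots,B_{m-1}$ gives $\nu(B) \le s_{m-1} < 2\nu(B_m)$. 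Thus $x$ lies in the open set $B^*_m := \bigcup\{B' : B' \supseteq B_m,\ \nu(B') < 2\nu(B_m)\}$, which (being a chain of nested balls, with a countable cofinal subchain by second countability) satisfies $\nu(B^*_m) \le 2\nu(B_m)$. Hence $F \setminus \bigcup_n B_n \subseteq Z \cup \bigcup_{m>N} B^*_m$, so $\nu\bigl(F \setminus \bigcup_n B_n\bigr) \le 2\sum_{m>N}\nu(B_m) < 2\epsilon$, and letting $\epsilon \to 0$ finishes the proof. Beyond the switch to mass-greedy selection, the only delicate point is the treatment of points with no positive-measure neighborhood, which the Lindel\"of property disposes of.
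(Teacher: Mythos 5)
Your proof is correct, but it is far more detailed than --- and structurally different from --- the paper's own argument, which consists of the single sentence that the claim ``follows from the fact that every open set is the disjoint union of balls and every open ball is clopen.'' Both arguments rest on the same two structural facts about ultrametric spaces (intersecting open balls are nested, and open balls are clopen), and your verification of clause (1) of \cref{Vitali covering property} --- choosing radii off the countable atom set of the pushforward of $\nu$ under $d(x,\cdot)$ so that $\alpha=1$ works in \cref{converging regularly} --- is a clean way to make precise what the paper leaves implicit. Where you genuinely diverge is clause (2): the paper's one-liner does not by itself produce a disjoint subfamily \emph{drawn from the given Vitali cover $W$} (an arbitrary open set decomposes into disjoint balls, but not necessarily balls of $W$, and a maximal chain in $W$ need not have a top element), whereas your mass-greedy selection --- choosing $B_{n+1}$ with $\nu(B_{n+1})>s_n/2$ rather than by radius, then controlling the uncovered set via the nested ``halo'' sets $B^*_m$ of measure at most $2\nu(B_m)$ --- supplies a complete Vitali-type covering argument valid for an arbitrary finite Borel measure. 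The only point worth tightening is the degenerate case where $s_n=0$ while admissible (necessarily null) balls remain: your parenthetical about ``no admissible ball remains'' does not literally cover it, but the same contradiction ($\nu(B)\le s_n=0$ versus $\nu(B)>0$ for $x\notin Z$) disposes of it immediately. What your approach buys is an actual proof where the paper offers only a pointer; what the paper's framing buys is brevity and the (correct) intuition that ultrametricity is doing all the work.
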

\begin{proof}
This follows from the fact that every open set is the disjoint union of balls and every open ball is clopen. 
\end{proof}
In particular, if the underlying space is a countable product of Cantor space, Baire space, and discrete spaces then the collection of open balls has the Vitali covering property with respect to any measure. 

It is worth contrasting \cref{TjurProperty} with \cref{converging regularly}. In particular, the notion of a Tjur point can be thought of as requiring that the conditional measures converge at each point irrespective of the manner in which we approximate the point, and specifically does not require the measurable sets in the approximating sequence
to actually contain the point they are approximating, so long as each is contained in an open neighborhood that does. In contrast, the use of regular convergence by Fraser and Naderi is in terms of a fixed class of sets having the Vitali covering property with respect to the given measure,
and does require that the sets one is conditioning on contain the point being approximated.

\section{Represented spaces and Weihrauch reducibility}
\label{Wred}

We now introduce the notion of a space equipped with a representation and describe some special cases in detail, including spaces of probability measures.  We then define some key operators and introduce the notion of a Weih\-rauch degree in order to
compare the computational complexity of operators.
For background on represented spaces, see \cite{arno}.  A similar treatment can be found in 
\cite{Col09} and \cite{Col10}.  Both build off the framework of synthetic topology \cite{Esc04}.
For  background on Weihrauch reducibility and Weihrauch degrees,
see, e.g., \cite{MR2791341},
or the introduction of \cite{Brattka2015249}.

\subsection{Represented spaces}

In order to define a notion of computability beyond that of the classical setting of Baire space, $\Baire$,
we adopt the framework of represented spaces, which is itself a distillation of core ideas from the computable analysis framework connecting back
to Turing's original work on computable real numbers.
The following definitions are taken from \cite{arno}.

\begin{definition}
A \defn{represented space} is a set $X$ along with 
a partial surjective function
$\delta_X:\subseteq\Baire \rightarrow X,$
called a \defn{representation}.
A point $x \in \Baire$ in the domain of $\delta_X$ is said to be a \defn{name} for its image $\delta_X(x)$ in $X$.
\end{definition}

In general, every point in $X$ will have many names:
indeed, each represented space $(X, \delta_X)$ defines an equivalence relation $\equiv_X$ on 
$\delta_X^{-1}(X) \subseteq \Baire$
given by $p \equiv_X q$ iff $\delta_X(p) = \delta_X(q)$.
For each equivalence relation on a subset of $\Nats^\Nats$, there is a corresponding represented space.

Functions between represented spaces can be understood as functions mapping names in the domain to names in the codomain.

\begin{definition}
Let $(X, \delta_X)$ and $(Y, \delta_Y)$ be represented spaces and let
$f:\subseteq X \multito Y$ be a multi-valued partial function. Then a
\defn{realizer} for $f$ is a (single-valued) partial function $F: \subseteq \Baire \to \Baire$ such that
for all $p$ in the domain of $f\circ \delta_X$, 
\[
\delta_Y\bigl(F(p)\bigr) \in f \bigl(\delta_X(p)\bigr).
\]
When this holds, we write $F \vdash f$.
\end{definition}

Note that a function $F$ is a realizer 
for a single-valued function
if and only if it preserves the equivalence relations associated to the represented spaces.

In what follows we will give explicit representations for only a few fundamental spaces. It is worth mentioning that what is important is not the specific representations but their defining properties: if the defining properties are computable in the appropriate senses, then any two representations that satisfy the defining properties will be computably isomorphic.
As such, after introducing only those basic representations that we need to get started, 
we will aim to avoid mentioning or using the details of a given representation whenever possible. 
For more on this approach to represented spaces see \cite{arno} and references therein.

\subsubsection{Continuous functions}
A map between represented spaces is called continuous (computable) if it has a continuous (computable) realizer. 
The notions of 
continuous maps between represented spaces and 
continuous maps between topological spaces are, in general, distinct, and should not be conflated.
Once we have introduced the notion of an admissible represented space, however,
\cref{thmad} relates these two notions precisely.
\begin{definition}
	\label{contmap-def}
Let $X$ and $Y$ be represented spaces.
Then the space $\ContFuncs(X,Y)$ of continuous maps between $X$ and $Y$ can itself be made into a represented space. One such representation of $\ContFuncs(X,Y)$, which we will use in this paper, is given by $\delta(0^n1p) = f$ iff the $n$'th oracle Turing machine computes a realizer for $f$ from oracle $p\in\Cantor$.
\end{definition} 
Intuitively speaking, the name of a map $f$ describes an oracle Turing machine that translates names for inputs in $X$ to names for outputs in $Y$.
Natural operations (evaluation, currying, uncurrying, composition, etc.) on continuous functions are computable (see \cite[Prop.~3.3]{arno}).

\subsubsection{The Naturals, Sierpinski space, and 
spaces of open sets}
The representation of sets and relations requires that we introduce two special represented spaces, namely $\Nats = (\Nats,\delta_\Nats)$ and $\Sier = (\{0,1\},\delta_\Sier)$.
In particular, $\delta_\Nats(0^n10^\omega) = n$ for all $n \in \Nats$, $\delta_\Sier(0^\omega) = 0$, and $\delta_\Sier(p) = 1$ for $p \neq 0^\omega$.
Note that $\Sier$ should not be confused with the represented space of binary digits $\ZO = (\{0,1\},\delta_\ZO)$ where $\delta_\ZO(0^\omega) =0$ and $\delta_\ZO(1^\omega) = 1$.  Indeed, it is common to define $\Sier$ on the two-point space $\{\bot,\top\}$ instead of $\{0,1\}$, however, the latter has the advantage, as we will see, that the EC operator becomes an identity map.

The following basic operations on $\Sier$ are computable (see
	{\cite[Prop.~4.1]{arno}}
for details):
\begin{enumerate}
\item finite and (i.e., min), $\wedge\colon \Sier \times \Sier \to \Sier$;
\item finite or (i.e., max), $\vee\colon \Sier \times \Sier \to \Sier$;
\item countable or (i.e., max), $\bigvee\colon \ContFuncs(\Nats,\Sier) \to \Sier$.
\end{enumerate}

Given Sierpinski space, we can define (computable) open sets as those for which membership $\in \colon X \to \Sier$ is continuous (computable).
Note that, in light of the representation of $\Sier$, this corresponds to semi-decidability relative to the name of the point in $X$ and the open set.
\begin{definition}
Let $X$ be a represented space.  Then the represented space $\Opens(X)$ of open sets of $X$ is identified with the represented space $\ContFuncs(X,\Sier)$.  In particular, a map $f \in \ContFuncs(X,\Sier)$ is taken to represent the inverse image $f^{-1}(1)$.
\end{definition}
In order to connect this notion with classical computability, note that the computable elements of $\Opens(\Nats)$ are the computably enumerable subsets of $\Nats$.

\begin{lemma}[{\cite[Prop.~4.2]{arno}}]
\label{arno-Prop-6}
Let $X$ and $Y$ be represented space. Then the following operations are computable:
\begin{enumerate}
\item finite intersection, $\cap \colon \Opens(X) \times \Opens(X) \to \Opens(X)$;
\item finite union, $\cup \colon \Opens(X) \times \Opens(X) \to \Opens(X)$;
\item countable union, $\bigcup \colon \ContFuncs(\Nats,\Opens(X)) \to \Opens(X)$;
\item inverse image, $^{-1} \colon \ContFuncs(X,Y) \to \ContFuncs(\Opens(Y),\Opens(X))$;
\item membership, $\in\colon X \times \Opens(X) \to \Sier$; 
\item product, $\times\colon \Opens(X) \times \Opens(Y) \to \Opens(X\times Y)$. 
\end{enumerate}
\end{lemma}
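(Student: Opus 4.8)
The plan is to work throughout via the identification $\Opens(X) = \ContFuncs(X,\Sier)$, so that an open set $U$ is named by (a name for) its membership function $\chi_U \in \ContFuncs(X,\Sier)$, characterized by $\chi_U^{-1}(1) = U$. Under this identification, each of the six operations has a one-line description of its output membership function in terms of the input membership functions. I would then reduce each such description to a composite of operations already known to be computable: the finite and countable operations $\wedge,\vee,\bigvee$ on $\Sier$ (cited from \cite[Prop.~4.1]{arno}) together with the computable higher-type operations on continuous function spaces --- evaluation, pairing, composition, currying, and uncurrying (cited from \cite[Prop.~3.3]{arno}). The genuine content in each case is not the pointwise set-theoretic identity, which is immediate, but the observation that the identity is witnessed \emph{uniformly in the inputs} by these computable operations.

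Concretely, five of the six items are immediate. For membership (5), $\in(x,U) = \chi_U(x)$, so it is just the computable evaluation map $X \times \ContFuncs(X,\Sier) \to \Sier$, up to the trivial (computable) swap of its two arguments. For finite intersection (1) and finite union (2), one has $\chi_{U\cap V}(x) = \chi_U(x)\wedge\chi_V(x)$ and $\chi_{U\cup V}(x) = \chi_U(x)\vee\chi_V(x)$, so the output membership function is the composite of the pairing $x\mapsto(\chi_U(x),\chi_V(x))$ with $\wedge$, respectively $\vee$; currying in the parameters $(U,V)$ then yields the desired computable map $\Opens(X)\times\Opens(X)\to\Opens(X)$. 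For inverse image (4), $\chi_{f^{-1}(V)} = \chi_V\circ f$, so ${}^{-1}$ is realized by composition, carried out uniformly in both $f\in\ContFuncs(X,Y)$ and $V\in\ContFuncs(Y,\Sier)$. For the product (6), $\chi_{U\times V}(x,y) = \chi_U(x)\wedge\chi_V(y)$, which one obtains either directly or by pulling $U$ and $V$ back along the (computable) coordinate projections using (4) and then intersecting using (1).

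The only item requiring care is countable union (3). Given a name for a sequence $\langle U_n\rangle_{n}$ as an element of $\ContFuncs(\Nats,\Opens(X)) = \ContFuncs(\Nats,\ContFuncs(X,\Sier))$, I would first uncurry to a name for the map $(n,x)\mapsto\chi_{U_n}(x)$ in $\ContFuncs(\Nats\times X,\Sier)$, then curry in the opposite variable (using $\Nats\times X\cong X\times\Nats$) to obtain a map $X\to\ContFuncs(\Nats,\Sier)$ sending $x$ to the sequence $\langle\chi_{U_n}(x)\rangle_{n}$, and finally post-compose with the computable countable-or $\bigvee\colon\ContFuncs(\Nats,\Sier)\to\Sier$ to produce $\chi_{\bigcup_n U_n}(x) = \bigvee_n\chi_{U_n}(x)$; currying once more in the sequence parameter gives the computable map $\ContFuncs(\Nats,\Opens(X))\to\Opens(X)$. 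I expect this interleaving of uncurrying and currying --- moving the $\Nats$-indexing past the point argument before $\bigvee$ is applied fiberwise --- to be the main (and only) obstacle; once the higher-type operations of \cite[Prop.~3.3]{arno} are invoked it is routine bookkeeping, and every other item collapses to a single application of $\wedge$, $\vee$, evaluation, or composition.
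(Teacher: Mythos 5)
Your proof is correct and takes essentially the same route as the source: the paper states this lemma only as a citation to \cite[Prop.~4.2]{arno} without reproducing an argument, and the proof there proceeds exactly as you do, identifying $\Opens(X)$ with $\ContFuncs(X,\Sier)$ and reducing each item to the computable operations $\wedge$, $\vee$, $\bigvee$ on $\Sier$ together with evaluation, composition, and (un)currying from \cite[Prop.~3.3]{arno}. Your treatment of the countable union --- uncurrying, swapping the $\Nats$ and $X$ arguments, and post-composing with $\bigvee$ before currying back in the sequence parameter --- is precisely the right bookkeeping, so there is nothing to add.
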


The notion of admissibility connects topological continuity and continuity in the sense of represented spaces.
For more details on represented spaces and admissible representations, see \cite[Ch.~3]{MR1795407} or \cite[\S9]{arno}, which is based on earlier work in \cite[\S4.3]{Schroder-phd} and \cite{MR1948058}.

By part (4) of \cref{arno-Prop-6}, the map $f \mapsto f^{-1}$ is computable.
Admissibility can be characterized in terms of the inverse of this map.
\begin{definition}\label{defad}
A represented space $X$ is \defn{admissible (computably admissible)}
when
the map $f \mapsto f^{-1} \colon \ContFuncs(Y,X) \to \ContFuncs(\Opens(X),\Opens(Y))$ has a well-defined and continuous (computable) partial inverse for
any represented space $Y$.
\end{definition}
\begin{remark}
The previous definition makes use of the following characterization of admissibility, which can be found in \cite[Thm.~9.11]{arno}.
\end{remark}

\begin{proposition}[{\cite[Thm.~9.11]{arno}}]\label{thmad}
A represented space $X$ is admissible if and only if any topologically continuous function $f \colon Y \to X$ (i.e., one for which the map $f^{-1} \colon \Opens(X) \to \Opens(Y)$ is well-defined) is 
continuous as a function between represented spaces (i.e., $f \in \ContFuncs(Y,X)$).
\end{proposition}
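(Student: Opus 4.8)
The statement is a characterization of admissibility, so the plan is to prove the two implications of \cref{thmad} separately, taking \cref{defad} as the working definition and freely using that inverse-image (\cref{arno-Prop-6}) and evaluation/currying (\cite[Prop.~3.3]{arno}) are computable. Throughout I write $\eta_X \colon X \to \Opens(\Opens(X))$ for the (computable) evaluation map $\eta_X(x) = (U \mapsto [x \in U])$, which pairs $x$ with its filter of open neighborhoods; here $\Opens(\Opens(X)) = \ContFuncs(\Opens(X),\Sier)$. For brevity I abbreviate the inverse-image operator $f \mapsto f^{-1} \colon \ContFuncs(Y,X) \to \ContFuncs(\Opens(X),\Opens(Y))$ as $\mathrm{inv}_Y$. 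One preliminary point to dispatch is that single-valuedness of the partial inverse in \cref{defad} is exactly $T_0$ separation (topological indistinguishability), which I would note at the outset.

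For the direction that the lifting property implies admissibility, the plan is to exhibit, for each represented space $Y$, a continuous partial inverse to $\mathrm{inv}_Y$. Given a continuous $\phi = f^{-1}$ in the range of $\mathrm{inv}_Y$, consider $e \colon \mathrm{dom} \times Y \to X$ with $e(\phi,y) = f(y)$, where $\mathrm{dom} \subseteq \ContFuncs(\Opens(X),\Opens(Y))$ is the range of $\mathrm{inv}_Y$. The key computation is that $e$ is topologically continuous: for $U \in \Opens(X)$ one has $e^{-1}(U) = \{(\phi,y) : y \in \phi(U)\}$, and $(\phi,y) \mapsto [y \in \phi(U)]$ factors as evaluation-at-$U$ followed by the computable membership map $\Opens(Y) \times Y \to \Sier$ of \cref{arno-Prop-6}, so $e^{-1}(U)$ is open. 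By hypothesis $e$ is then continuous between represented spaces, and currying produces a continuous $\Lambda e \colon \mathrm{dom} \to \ContFuncs(Y,X)$ with $\Lambda e(f^{-1}) = f$. This is the sought partial inverse, and as $Y$ was arbitrary, $X$ is admissible.

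For the converse, the plan begins by extracting from \cref{defad} the special case $Y = \{\ast\}$, the one-point represented space with $\Opens(\{\ast\}) \cong \Sier$. Under the identifications $\ContFuncs(\{\ast\},X) \cong X$ and $\ContFuncs(\Opens(X),\Opens(\{\ast\})) = \Opens(\Opens(X))$, the operator $\mathrm{inv}_{\{\ast\}}$ is precisely $\eta_X$, since a map $\ast \mapsto x$ sends $U$ to $[x \in U]$. Hence admissibility furnishes a continuous partial inverse $s \colon \subseteq \Opens(\Opens(X)) \to X$ with $s(\eta_X(x)) = x$: a point can be recovered continuously from its filter of open neighborhoods. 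Given a topologically continuous $f \colon Y \to X$, it then suffices to produce, continuously in a name of $y$, a name of $\eta_X(f(y)) = (U \mapsto [y \in f^{-1}(U)])$, after which composing with $s$ names $f(y)$ and thereby realizes $f$.

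The hard part is exactly this production step, where a genuine circularity must be broken: naively semideciding $f(y) \in U$ uniformly in $U$ would require a realizer for $f$, which is what we are building. The plan is to route around this through the countable effective subbasis supplied by the representations themselves, i.e.\ by the cylinders of the ambient Baire space rather than by any topological second-countability of $X$. For each subbasic open $B$, topological continuity makes $f^{-1}(B)$ open in $Y$, hence a union of name-cylinders; the resulting countable assignment $B \mapsto (\text{cylinders covering } f^{-1}(B))$ is a single fixed element of Baire space that we bake in as a (generally noncomputable) oracle. A continuous realizer equipped with this oracle enumerates the subbasic neighborhoods of $f(y)$, thereby naming $\eta_X(f(y))$ in the open-filter representation, and composition with $s$ finishes the argument. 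The main obstacle, and the crux of the whole proposition, is this passage from the merely pointwise openness of each $f^{-1}(B)$ to one continuous realizer; it is precisely here that admissibility (through the section $s$) and the countable subbasis of names are indispensable.
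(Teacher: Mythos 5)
The paper does not actually prove this proposition: it is imported wholesale from Pauly's paper as \cite[Thm.~9.11]{arno}, so there is no internal proof to compare against and I can only judge your argument on its own terms. Your first direction (the lifting property implies admissibility) is essentially right: the evaluation map $e(f^{-1},y)=f(y)$ is topologically continuous because $(\phi,y)\mapsto[y\in\phi(U)]$ factors through evaluation-at-$U$ followed by the computable membership map of \cref{arno-Prop-6}, so applying the lifting hypothesis to $e$ itself and currying yields the continuous section of your $\mathrm{inv}_Y$. The loose end is that $e$ is not even well defined unless the lifting property forces $X$ to be $T_0$; you flag this but do not prove it, and it genuinely requires an argument (e.g.\ a counting argument using a represented space on a two-point set with a sufficiently wild representation), since without it the ``partial inverse'' in \cref{defad} is not single-valued and the equivalence would fail.

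The converse direction has a real gap at precisely the step you identify as the crux. Reducing to ``name $\eta_X(f(y))\in\Opens(\Opens(X))$ continuously in a name of $y$, then apply the section $s$'' is the right skeleton, but the proposed resolution does not break the circularity. A name of $\eta_X(f(y))$ must semidecide $f(y)\in U$ from a name of an \emph{arbitrary} $U\in\Opens(X)$, i.e.\ from an arbitrary program realizing $\chi_U$; admissibly represented spaces are in general not countably based, so there is no countable subbasis of open subsets of $X$ to enumerate against, and the ``cylinders of the ambient Baire space'' do not supply one, since $\delta_X$-images of cylinders are not open in $X$ and topological continuity of $f$ says nothing about their preimages. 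Even granting some countable family $\{B_n\}$ of opens of $X$ for which your oracle semidecides $f(y)\in B_n$, no mechanism is given for passing from those finitely observable facts to membership in an arbitrary $U$ presented only by a realizer of $\chi_U$. Indeed, by cartesian closedness, continuity of $y\mapsto\eta_X(f(y))$ is \emph{equivalent} to continuity of $f^{-1}\colon\Opens(X)\to\Opens(Y)$ as a map of represented spaces, which is strictly stronger than the stated hypothesis that $f^{-1}$ is merely well defined; upgrading the one to the other for admissible $X$ is exactly the content of Schr\"oder's theorem, whose proof runs through the sequential structure of final topologies of representations and an explicit construction of a realizer from convergent sequences of names. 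None of that machinery appears in your sketch, so the hard implication remains unproved.
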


Every representation that we use in this paper is computably admissible.

\subsubsection{Some standard spaces as represented spaces}

We are interested in studying operators defined on certain common mathematical spaces. We first describe how these spaces can be made into computably admissible represented spaces.
In particular, the Naturals $\Nats$, Sierpinski space $\Sier$, the binary digits $\ZO$, but also the Reals $\Reals$ can all be made into computably admissible spaces with respect to their standard topologies.  
Finite or countable products of these spaces are again computably admissible.  
(The details are not important for our presentation, but the interested reader can find a thorough account in \cite{MR1795407}.)

For the represented space $\Reals$ of reals we will require that the map 
$\id_{\Rationals}\colon \ZO \times \Nats \times \Nats \to \Reals$ 
defined by
$s, a,b \mapsto (-1)^{s} \cdot a/b$ 
is computable, as well as the $<$ and $>$ comparison operators $\Reals \times \Reals \to \Sier$.  
These requirements on the represented space $\Reals$ already determine several other key properties, 
including the computability of standard operations on the reals such as addition, subtraction, multiplication, division by a nonzero, and exponentiation.

We will need to define two slightly more exotic represented spaces.  We define $\LReals$ to be the represented space with underlying set $\Reals \cup \{\infty\}$, such that the comparison map $< \colon \Reals \times \LReals \to \Sier$ is computable, and which is computably admissible with respect to the 
right-order topology 
generated by the rays 
$\{ (x,\infty] \colon x \in \Reals \}$.
Similarly we define $\UReals$  to be the represented space with underlying set $\Reals \cup \{-\infty\}$, such that the comparison map $> \colon \Reals \times \UReals \to \Sier$ is computable, and which is computably admissible with respect to the 
left-order topology 
generated by the rays 
$\{ [-\infty,x) \colon x \in \Reals \}$.
Note that an element of $\LReals$ is a computable point precisely when it is a lower semi-computable real. We will also use $\Reals^+, \LReals^+, \UReals^+$ for the subspaces with underlying set $(0, \infty]$ and similarly $\Reals^-, \LReals^-, \UReals^-$ for the subspaces with underlying set $[-\infty, 0)$.

The following standard operations are computable:
\begin{enumerate}
\item identity, $\id \colon \Reals \to \LReals$ and $\id \colon \Reals \to \UReals$, and also $\id \colon \LReals \times \UReals \to \Reals$ defined on the diagonal $(x,x)$;
\item comparison, $> \colon \LReals \times \UReals \to \Sier$ and $< \colon \UReals \times \LReals \to \Sier$;
\item addition, $+\colon \LReals \times \LReals \to \LReals$;
\item negation, $-\colon \LReals \to \UReals$ as well as $-\colon \UReals \to \LReals$;
\item multiplication by a positive real, $\times_+\colon \LReals \times \Reals^+ \to \LReals$;
\item multiplication by a negative real, $\times_-\colon \LReals \times \Reals^- \to \UReals$;
\item reciprocation, $^{-1} \colon \LReals^+ \to \UReals^+$, $^{-1} \colon \LReals^- \to \UReals^-$,  $^{-1} \colon \UReals^+ \to \LReals^+$, and $^{-1} \colon \UReals^- \to \LReals^-$ for nonzero quantities;
\item infinite sums, $\sum\colon \ContFuncs(\Nats, \LReals^+) \to \LReals$. 
\end{enumerate}

Besides these concrete spaces, we are interested in admissible representations of complete separable metric spaces.

\begin{definition}\label{cmsdefn}
A \defn{computable metric space} is a triple $((S,\delta_S),d_S,s)$ such that 
$(S,d_S)$ is a complete separable metric space, 
$(S,\delta_S)$ is an admissible represented space, 
$s \in \ContFuncs(\Nats,S)$ is a computable and dense sequence in $S$,
and the following are computable:
\begin{enumerate}
\item the distance function, $d_S\colon S \times S \to \Reals$;
\item the limit operator
	$\ContFuncs(\Nats,S) \to S$ defined
	on the set of rapidly converging Cauchy sequences, 
	i.e., defined on
		$\<r_n\>_{n\in\Nats} \in \ContFuncs(\Nats,S)$ 
	satisfying
	$| d_S(r_n) - d_S(r_{n+1})| < 2^{-n}$ for all $n\in\Nats$; 
\item the map that takes a sequence $\<(s_n, q_n)\>_{n \in \Nats} \in \ContFuncs(\Nats, \Nats \times \Rationals)$ to the element $\bigcup_{ n \in \Nats} \Balls(s_n, q_n) = \{x\in S \st (\exists n \in \Nats)\ d_S(x, s_n) < q_n\} \in \Opens(S, \delta_S)$ is computable with a computable multi-valued inverse. 
\end{enumerate}
\end{definition}
We will omit mention of $\delta_S$ and refer to $(S, \delta_S)$ by $S$ when no confusion can arise. 

Notice that we do not insist that the limit map in the definition of a space be computable on the collection of all Cauchy sequences, but only those sequences which are rapidly converging. In particular, all such rapidly converging Cauchy sequences have a concrete, and hence computable, bound (which tends to zero) on the distance between each element of the sequence and the limit point. We will reconsider the general limit operator on Cauchy sequences in Sections~\ref{the-EC-operator} and~\ref{limopsection}.

Note that $(\Reals, d, \Rationals)$, where $d(x,y) = |x - y|$, is a computable metric space. Another important computable metric space is Cantor space, \Cantor, by which we mean the computable metric space consisting of the space of functions
\[
\Nats \to \{0, 1\}
\]
with the usual ultrametric distance, and with dense set consisting of the
eventually zero functions enumerated lexicographically.

\subsubsection{Representing probability measures}

\begin{definition}
Let $(S,d_S)$ be a metric space.
For $\varepsilon > 0 $ and $A \subseteq S$, let $A^\varepsilon \defas \{x \st d_S(x,A)<\varepsilon\}$.
The \defn{Prokhorov metric} $\pmetric$ on $\ProbMeasures(S)$ is defined by
\[
\pmetric(\mu, \nu) 
\defas
\inf\bigl\{\varepsilon > 0
\st
\mu(A) \le \nu(A^\varepsilon) + \varepsilon \ \text{~for every Borel set~} A
\bigr\}
\]
for $\mu, \nu \in \ProbMeasures(S)$.
\end{definition}

Note that the Prokhorov metric generates the weak topology on $\ProbMeasures(S)$.

The following result shows how to make the space $\ProbMeasures(S)$ of Borel probability measures on a computable metric space $S$ into a computable metric space itself.

\begin{proposition}
	[{\cite[Appendix~B.6]{MR2159646}}]
Let $(S,d_S, s)$ be a computable metric space.
Then the space of Borel probability measures $\ProbMeasures(S)$ is itself a computable metric space under the Prokhorov metric
and with dense set
	\[
\left\{\sum_{i=0}^{k} q_i \delta_{s(m_i)} \colon k, m_0, \dots, m_k \in \Nats,\ q_0, \dots, q_k \in \Rationals^+ \text{ and } \sum_{i = 0}^k q_i = 1 \right\},
\]
\end{proposition}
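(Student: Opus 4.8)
The plan is to verify directly that the triple consisting of $\ProbMeasures(S)$, the Prokhorov metric $\pmetric$, and the proposed dense sequence satisfies \cref{cmsdefn}. The classical ingredients are standard and may be cited: for $S$ complete separable metric, $(\ProbMeasures(S), \pmetric)$ is again complete and separable, and $\pmetric$ metrizes the weak topology (as noted above). Moreover, the finitely supported measures with support in the dense set $s(\Nats)$ and rational weights are weakly dense in $\ProbMeasures(S)$, hence dense for $\pmetric$; enumerating the finite tuples $(k, m_0,\dots,m_k, q_0,\dots,q_k)$ with $q_i \in \Rationals^+$ and $\sum_i q_i = 1$ yields a computable dense sequence. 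I would therefore take the representation $\delta_{\ProbMeasures(S)}$ to be the Cauchy representation induced by this dense sequence; by the general theory of computable metric spaces it is then automatically admissible and satisfies conditions (2) and (3) of \cref{cmsdefn} (the computable limit on rapidly converging Cauchy sequences and the open-ball-union map), provided the one genuinely computational requirement holds: that $\pmetric$ is computable, uniformly, between members of the dense sequence.

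The heart of the argument is thus to compute $\pmetric(\mu,\nu)$ for two finitely supported rational measures $\mu = \sum_i q_i \delta_{x_i}$ and $\nu = \sum_j p_j \delta_{y_j}$, where $x_i = s(m_i)$ and $y_j = s(n_j)$. I would first reduce the defining infimum to a finite problem. For any Borel set $A$, writing $I = A \cap \supp(\mu)$ we have $\mu(A) = \mu(I)$ and $A^\varepsilon \supseteq I^\varepsilon$, so $\mu(A) - \nu(A^\varepsilon) \le \mu(I) - \nu(I^\varepsilon)$; hence the supremum over all Borel $A$ of $\mu(A) - \nu(A^\varepsilon)$ is attained among the finitely many subsets $I \subseteq \supp(\mu)$. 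For such an $I$, membership $y_j \in I^\varepsilon$ is governed by $\min_{x_i \in I} d_S(x_i, y_j) < \varepsilon$, so the entire comparison is controlled by the finite matrix of ground distances $d_S(x_i, y_j)$, which are computable reals since $d_S$ is computable on $S$. Consequently $\varepsilon \mapsto \max_{I}\bigl(\mu(I) - \nu(I^\varepsilon)\bigr)$ is a non-increasing step function whose jumps occur only at the finitely many critical values $d_S(x_i, y_j)$, and $\pmetric(\mu,\nu)$ is the infimum of those $\varepsilon > 0$ for which this quantity is at most $\varepsilon$.

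The main obstacle is that equality and order comparisons between the computable reals $d_S(x_i, y_j)$ are only semi-decidable, so one cannot compute the step function exactly or locate the critical thresholds on the nose. Since, however, the goal is only to produce $\pmetric(\mu,\nu)$ as a computable real --- that is, to approximate it to within any prescribed $2^{-n}$ --- this suffices. At a rational $\varepsilon$ distinct from every ground distance, each comparison $d_S(x_i, y_j) < \varepsilon$ or $d_S(x_i, y_j) > \varepsilon$ eventually halts, so the step quantity is computable there; monotonicity in $\varepsilon$ together with the density of such rationals lets me approximate the crossover point $\pmetric(\mu,\nu)$ from both sides (convergent upper bounds from certified strict inequalities that drive the quantity below $\varepsilon$, and convergent lower bounds symmetrically), with the boundary cases where $\varepsilon$ meets a critical distance handled by approximating from strictly inside. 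Because this is carried out uniformly in the indices $(m_i, n_j)$ and the weights, $\pmetric$ is computable on the dense sequence, completing the verification that $\ProbMeasures(S)$ is a computable metric space; one finally records that, since $d_S$ is $1$-Lipschitz in each argument, this computation extends from the dense sequence to a computable distance map on all of $\ProbMeasures(S)$, as \cref{cmsdefn} requires.
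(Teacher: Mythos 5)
The paper does not actually prove this proposition; it is imported wholesale from G\'acs (the cited Appendix~B.6), so there is no internal proof to compare against. Your direct verification is essentially correct and self-contained. The classical facts (completeness, separability, weak density of finitely supported rational measures) are indeed standard, and the genuinely computational content is exactly where you locate it: uniform computability of $\pmetric$ on the dense sequence. Your reduction of $\sup_A\bigl(\mu(A)-\nu(A^\varepsilon)\bigr)$ to a maximum over the finitely many subsets $I\subseteq\supp(\mu)$ is valid, since $\mu(A)=\mu(A\cap\supp\mu)$ and $A^\varepsilon\supseteq(A\cap\supp\mu)^\varepsilon$, and the resulting function of $\varepsilon$ is a non-increasing rational-valued step function with jumps only at the ground distances $d_S(x_i,y_j)$. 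You also correctly identify and dispose of the one genuine obstacle --- that comparisons of the computable reals $d_S(x_i,y_j)$ against a threshold are only semi-decidable --- by evaluating at rational $\varepsilon$ for which all comparisons resolve (all but finitely many do) and sandwiching the crossover point using monotonicity; note that once the memberships resolve, the step value is an exact rational, so the test $g(\varepsilon)\le\varepsilon$ is decidable there, which is what makes both the upper and lower bounds certifiable. Two small points: the Lipschitz property you invoke at the end should be attributed to $\pmetric$ (via the triangle inequality), not to $d_S$; and the claim that conditions (2) and (3) of \cref{cmsdefn} come for free from the Cauchy representation, while true, is doing some quiet work and is the sort of thing G\'acs's treatment makes explicit. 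For comparison, the route most common in the literature computes the Prokhorov (or Wasserstein) distance between finitely supported measures via Strassen's coupling characterization, which turns the problem into a finite linear program; your exponential-in-$k$ enumeration of subsets is less efficient but entirely adequate for establishing computability.
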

where $\Rationals^+ \defas \{q \in \Rationals\st q > 0\}$.
An alternative representation of a Borel probability measure is in terms of its restriction to the class of open sets. Such a set function is known as a \emph{valuation}. A valuation on a separable metric space uniquely determines a Borel probability measure \cite[\S3.1]{MR2351942}.

\newcommand{\Valuations}{\mathcal V}
\begin{definition}
Let $X$ be an admissible represented space.
The represented space $\Valuations(X)$ of \defn{valuations on $X$} is the subspace of $\ContFuncs(\Opens(X),\LReals)$ corresponding to restrictions of Borel probability measures to the open sets.
\end{definition}

By {\cite[Thm.~3.3]{MR2351942}},
$\Valuations(X)$ is computably admissible when $X$ is.
The next result relates the representation of Borel measures as points in the Prokhorov metric to their representation as points in the space of valuations.

\begin{lemma}
\label{Lemma: id map from prob to val is continuous}
Let $X$ be a computable metric space.
The identity map from $\ProbMeasures(X)$ to $\Valuations(X)$ is computable and so is its inverse.
\end{lemma}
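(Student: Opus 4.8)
The plan is to treat the two directions separately, reducing each to the computability of integrals $\int f\,\dee\mu$ of bounded continuous $f$, since this quantity interfaces cleanly with both representations. Throughout, write $U = \bigcup_n \Balls(s_n, q_n) \in \Opens(X)$ for an open set presented as a computable union of basic balls. \emph{Forward direction} (identity $\ProbMeasures(X) \to \Valuations(X)$): first I would record the standard fact that, for the Prokhorov representation, $(\mu, f) \mapsto \int f \, \dee\mu$ is computable whenever $f\colon X \to \Reals$ is a computable bounded continuous function with known bound and Lipschitz constant, because on the dense measures $\sum_i q_i \delta_{x_i}$ the integral is the finite sum $\sum_i q_i f(x_i)$, and the bound $|\int f\,\dee\mu - \int f\,\dee\nu| \le (\|f\|_\infty + \mathrm{Lip}(f))\,\pmetric(\mu,\nu)$ (up to a universal constant) supplies a computable modulus of convergence along a rapidly converging Cauchy name. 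Next, for open $U$ I would exhibit a computable increasing sequence of bounded Lipschitz functions $f_N\colon X \to [0,1]$ with $f_N \uparrow \Ind_U$ pointwise, e.g. $f_N(x) = \min\{1,\, N \cdot \max_{n < N}(q_n - d_S(x, s_n))^+\}$, which is supported in $\bigcup_{n<N}\Balls(s_n,q_n) \subseteq U$ and equals $1$ deep inside $U$. By monotone convergence $\mu(U) = \sup_N \int f_N\,\dee\mu$, and as each integral is computable uniformly in $N$, in $\mu$, and in the name of $U$, the value $\mu(U)$ is lower semicomputable uniformly. Currying yields a computable map $\ProbMeasures(X) \to \ContFuncs(\Opens(X),\LReals)$ whose image lies in $\Valuations(X)$ and which is the identity on measures.

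\emph{Reverse direction} (identity $\Valuations(X) \to \ProbMeasures(X)$): from the valuation I would first upgrade to computing integrals. For bounded lower semicontinuous $f \ge 0$ the layer-cake identity $\int f \,\dee\mu = \int_0^{\|f\|_\infty} \mu(\{f > t\})\,\dee t$ expresses the integral as the Riemann integral of a lower-semicomputable integrand (each superlevel set is open, and its measure is supplied by the valuation), so $\int f\,\dee\mu$ is computable from below; applying this to both $f$ and $\|f\|_\infty - f$ shows $\int f\,\dee\mu$ is a computable real for every computable bounded continuous $f$. Using that a Radon measure on a complete separable metric space is computably tight — one finds, via the upper-semicomputable closed-ball measures, finitely many dense points whose $\varepsilon$-balls carry $\mu$-mass exceeding $1-\varepsilon$ — I would fix a finite net $f_1,\dots,f_m$ of bounded Lipschitz functions fine enough on the essential compact to control the bounded-Lipschitz metric to within $\varepsilon$, and then search over dense measures $\nu$ supported on those points for one with $|\int f_i\,\dee\mu - \int f_i\,\dee\nu| < \delta$ for all $i \le m$. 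Such a $\nu$ exists (discretize $\mu$), and the defining condition is semidecidable since all the integrals involved are computable reals; the finite match then forces the bounded-Lipschitz distance, hence $\pmetric(\mu,\nu)$, below $\varepsilon$. Producing such a $\nu_k$ for $\varepsilon = 2^{-k}$ and re-spacing gives a rapidly converging Cauchy name of $\mu$, as required by the definition of a computable metric space (Definition~\ref{cmsdefn}).

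The hard part will be the reverse direction, and specifically certifying $\pmetric(\mu,\nu) < \varepsilon$ computably: the Prokhorov (equivalently, bounded-Lipschitz) distance is an infinite supremum, so it is only lower semicomputable in general, and one cannot directly semidecide that it is small. The fix is to localize, so that computable tightness confines all but $\varepsilon$ of the mass of both $\mu$ and the candidate $\nu$ to a common approximately compact set, on which the unit ball of bounded Lipschitz functions is totally bounded; agreement on a fixed finite net then genuinely bounds the full supremum. Making the dependence of the net and of the tolerance $\delta$ on $\varepsilon$ quantitative, and verifying that the accepted $\nu$ is close in the Prokhorov metric rather than merely in finitely many test integrals, is where the real care is needed. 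The asymmetry that open-set measures are only lower- and closed-set measures only upper-semicomputable is exactly what the tightness step is arranged to neutralize.
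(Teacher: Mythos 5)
The paper offers no proof of this lemma at all --- it is stated with a pointer to Hoyrup--Rojas --- so your argument can only be measured against the standard proofs in those references. Your forward direction is exactly the standard one (integrate a computable increasing sequence of Lipschitz minorants of $\Ind_U$, using that integration of bounded Lipschitz functions with known bound and Lipschitz constant is computable from a Prokhorov name), and it is correct. For the reverse direction you take a genuinely different and more laborious route than the cited sources. The standard argument semidecides $\pmetric(\mu,\nu)<\varepsilon$ directly for finitely supported rational $\nu$: since $\nu$ is carried by finitely many points, the Prokhorov condition $\nu(A)\le\mu(A^{\varepsilon'})+\varepsilon'$ need only be checked for the finitely many $A\subseteq\supp(\nu)$, and each $A^{\varepsilon'}$ is a finite union of open balls whose $\mu$-measure the valuation lower semicomputes; enumerating the basic Prokhorov balls that contain $\mu$ is then precisely a name for $\mu$. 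This bypasses tightness, nets of Lipschitz functions, and any comparison between the bounded-Lipschitz and Prokhorov metrics. Your route --- computable integrals via the layer-cake formula, an $\varepsilon$-tight finite union of balls, a finite net in the unit ball of Lipschitz functions on that set, and a quantitative inequality converting bounded-Lipschitz closeness into Prokhorov closeness --- does work, and you correctly identify the one delicate point (certifying closeness to $\mu$, which is only lower semicomputable in general) together with the correct fix (restrict candidates $\nu$ to the net points so both measures concentrate on a common totally bounded set). What your approach buys is a useful by-product, the computability of $\int f\,\dee\mu$ for computable bounded continuous $f$ from the valuation alone; what the standard approach buys is that the only hard step collapses to a finite, manifestly semidecidable check.
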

In other words,
a probability measure is computable
if and only if, uniformly in the name for an open set, we can lower semi-compute the measure of that open set. See \cite[Sec.~2.4]{MR2410914}  or \cite[Thm.~4.2.1]{MR2519075} for more on computable measures.

Let $\mu$ be a probability measure on a topological space $S$, 
let $E$ be a Borel subset of $S$, 
and
let $\partial E$ denote the boundary of $E$, i.e., the difference 
between
its closure and its interior.  
The set $E$ is a \defn{$\mu$-continuity set} when $\mu( \partial E) = 0$.

For a computable 
metric space $X$
and Borel probability measure $\mu \in \ProbMeasures(X)$, 
let $\AD {\mu}(X)$ be the class of open sets $U \in \Opens(X)$ that are also $\mu$-continuity sets.
Concretely, one representation for $\AD {\mu}(X)$, which we will use here,
is the map $\delta \colon \Baire \to \AD {\mu}(X)$ such that $\delta(p) = U$ if and only if $p=(u,v)$, where $u$ is a name for $U$ as an element of $\Opens(X)$ and $v$ is a name an open set $V$, as an element of $\Opens(X)$, 
where $V \subseteq X \setminus U$ and $\mu(V) =\mu(X \setminus U)$. 
A more elegant approach, which we do not take here, is to define $\AD {\mu}(X)$ in terms of the extremal
property that containment (i.e., the $\in$ relation) is computable on a $\mu$-measure one set.
We now describe several computable operations:
\begin{lemma}
	\label{adcompop}
	Let $X$ and $Y$ be computable metric spaces, and let $\mu, \mu_X \in \ProbMeasures(X)$ and $\mu_Y \in\ProbMeasures(Y)$.
	The following maps are computable:
\begin{enumerate}
\item $\cap \colon \AD {\mu}(X) \times \AD {\mu}(X) \to \AD {\mu}(X)$;
\item $\cup \colon \AD {\mu}(X) \times \AD {\mu}(X) \to \AD {\mu}(X)$;
\item $\times \colon \AD {\mu_X}(X) \times \AD {\mu_Y}(Y) \to \AD {\mu_X \otimes \mu_Y}(X \times Y)$;
\item $\icmu {{}} \colon \AD {\mu}(X) \multito \AD {\mu}(X)$, which takes an open $\mu$-continuity set $U$ to 
the set of open $\mu$-continuity sets $V$ contained within, and of the same $\mu$-measure as, 
$X \setminus U$;
\item $\in \colon X \times \AD {\mu}(X) \to \Sier$;
\item $\mathrm{id}: \AD {\mu}(X) \to \Opens(X)$.
\end{enumerate}
\end{lemma}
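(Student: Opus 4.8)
The plan is to exploit the explicit pair-representation of $\AD\mu(X)$: a name for an open $\mu$-continuity set $U$ is a pair $(u,v)$, where $u$ names $U$ in $\Opens(X)$ and $v$ names an open set $V$ with $V\subseteq X\setminus U$ and $\mu(V)=\mu(X\setminus U)$. The first thing I would record is that such a pair is precisely a pair of disjoint open sets with $\mu(U)+\mu(V)=1$: disjointness is immediate from $V\subseteq X\setminus U$, and $\mu(U)+\mu(V)=\mu(U)+\mu(X\setminus U)=1$. Consequently $V$ agrees with the interior $X\setminus\overline U$ of the complement up to a $\mu$-null set, since both are subsets of the closed set $X\setminus U$ and both have measure $1-\mu(U)$ there (the discrepancy lies in $\partial U$, which is null because $U$ is a continuity set). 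The key consequence is that each algorithm can be taken to operate on $\Opens(X)$ names alone -- no measure is ever evaluated at runtime -- so all the work lies in checking that the output pair again satisfies the two defining conditions.

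The projection-type clauses are immediate. For (6), $\mathrm{id}\colon\AD\mu(X)\to\Opens(X)$ simply returns the first component $u$. For (5), membership factors through (6) followed by the computable map $\in\colon X\times\Opens(X)\to\Sier$ of \cref{arno-Prop-6}. For (4), I would observe that the multifunction is realized by the swap $(u,v)\mapsto(v,u)$: the set $V$ named by $v$ is automatically a $\mu$-continuity set (its closure lies in the closed set $X\setminus U$, forcing $\mu(\overline V)=\mu(V)$), and the pair $(v,u)$ is a valid name for $V$ because $U\subseteq X\setminus V$ with $\mu(U)=1-\mu(V)=\mu(X\setminus V)$. Since the target of $\icmu{{}}$ is the set of all admissible witnesses, outputting this particular $V$ suffices.

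For the binary clauses (1)--(3) the realizers are the corresponding De Morgan-dual operations on $\Opens(X)$, all computable by \cref{arno-Prop-6}. For intersection I would return the pair $(U\cap U',\,V\cup V')$; for union the pair $(U\cup U',\,V\cap V')$; and for the product the pair $(U\times U',\,(V\times Y)\cup(X\times V'))$. In each case there are two facts to verify. First, the output open set is a continuity set: $\partial(U\cap U')$ and $\partial(U\cup U')$ are contained in $\partial U\cup\partial U'$, while $\partial(U\times U')\subseteq(\partial U\times\overline{U'})\cup(\overline U\times\partial U')$, each of $\mu$- (resp.\ $\mu_X\otimes\mu_Y$-) measure zero. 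Second, the witness carries the full complementary mass. Using that $V$ and $V'$ differ from $X\setminus\overline U$ and $X\setminus\overline{U'}$ only by $\mu$-null sets, the union $V\cup V'$ equals $X\setminus(\overline U\cap\overline{U'})$ modulo a null set; since $(\overline U\cap\overline{U'})\setminus(U\cap U')\subseteq\partial U\cup\partial U'$ is null, $\mu(V\cup V')=1-\mu(U\cap U')=\mu(X\setminus(U\cap U'))$, and the union case is dual. For the product, a direct inclusion--exclusion computation gives $(\mu_X\otimes\mu_Y)\bigl((V\times Y)\cup(X\times V')\bigr)=\mu_X(V)+\mu_Y(V')-\mu_X(V)\mu_Y(V')=1-\mu_X(U)\mu_Y(U')$, which equals $(\mu_X\otimes\mu_Y)\bigl((X\times Y)\setminus(U\times U')\bigr)$.

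I expect the main obstacle to be clause (3), and specifically the bookkeeping that the proposed witness $(V\times Y)\cup(X\times V')$ both sits inside the complement of $U\times U'$ and carries the full complementary mass; the inclusion--exclusion identity only collapses to $1-\mu_X(U)\mu_Y(U')$ because $V$ and $V'$ are complementary in measure to $U$ and $U'$. The analogous verifications for (1) and (2) hinge on the single geometric fact that passing from $U$ to $\overline U$ changes $\mu$-measure by zero, which is exactly the continuity-set hypothesis; making this precise -- that $V$ is $\mu$-equivalent to $X\setminus\overline U$ -- is the one step I would state carefully rather than wave through.
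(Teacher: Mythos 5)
Your proposal is correct and follows essentially the same route as the paper's proof: project for (6), compose with membership on $\Opens(X)$ for (5), swap the pair $(u,v)\mapsto(v,u)$ for (4), and use De Morgan-dual witnesses for (1)--(3); you simply spell out the measure-theoretic verifications that the paper leaves implicit. The only cosmetic difference is in (3), where the paper uses the witness $(U^*\times V)\cup(U\times V^*)\cup(U^*\times V^*)$ while you use $(V\times Y)\cup(X\times V')$ --- both are valid names for the complement of the product, carrying the same mass $1-\mu_X(U)\mu_Y(U')$.
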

\begin{proof}
	That (1), (2), and (5) are computable follows immediately from \cref{arno-Prop-6}, parts (1), (2), and (5), respectively.
	The computability of (6) follows trivially from the definition of the representation for $\AD {\mu}(X)$.
	The computability of (4) follows from the fact that, if $p=(u,v)$ is a name for $U$, then $(v,u)$ is a name for an element in $\icmu{U}$. 
To see that the map (3) is computable 
let $U$ and $V$ be a $\mu_X$- and $\mu_Y$-continuity set, respectively, and compute $U^* \in \icmus{U}{\mu_X}$ and $V^* \in \icmus{V}{\mu_Y}$. 
Then $(U^* \times V) \cup (U \times V^*) \cup (U^* \times V^*) \in \icmus{(U \times V)}{(\mu_X \otimes \mu_Y)}$
and is a computable element of $\Opens(X \times Y)$ 
by parts (2) and (6)
	of \cref{arno-Prop-6}.
\end{proof}

\begin{lemma}\label{compmeasureofad}
	Let $X$ be a computable metric space and let $\mu \in \ProbMeasures(X)$.
	The map taking $\mu$ and $U\in \AD {\mu}(X)$ to $\mu(U) \in \Reals$ is computable.
\end{lemma}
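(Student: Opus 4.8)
The plan is to realize $\mu(U)$ simultaneously as a lower semi-computable and an upper semi-computable real, and then to fuse these two one-sided approximations into an exact real using the computable diagonal map $\id \colon \LReals \times \UReals \to \Reals$.

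First I would invoke \cref{Lemma: id map from prob to val is continuous}: since $\mu$ is presented as a point of $\ProbMeasures(X)$, we may convert it to a valuation, so that the measure of any open set is lower semi-computable uniformly in a name for that set. By definition, a name for $U \in \AD{\mu}(X)$ is a pair $(u,v)$, where $u$ names $U$ as an element of $\Opens(X)$ and $v$ names an open set $V$ with $V \subseteq X \setminus U$ and $\mu(V) = \mu(X \setminus U)$. Feeding $u$ to the valuation yields $\mu(U) \in \LReals$, and feeding $v$ to it yields $\mu(V) \in \LReals$.

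Next I would exploit the defining property of the $\mu$-continuity representation. Because $\mu$ is a probability measure and $V \subseteq X \setminus U$ satisfies $\mu(V) = \mu(X \setminus U)$, we have
\[
\mu(V) = \mu(X \setminus U) = 1 - \mu(U),
\]
so that $\mu(U) = 1 - \mu(V)$. As $\mu(V)$ is lower semi-computable, its negation is upper semi-computable via $-\colon \LReals \to \UReals$, and adding the computable constant $1$ (using the listed operations on $\LReals$ and $\UReals$) keeps the result upper semi-computable; this expression therefore exhibits $\mu(U)$ as an element of $\UReals$. Combining the lower estimate obtained from $u$ with this upper estimate obtained from $v$, the resulting pair lies on the diagonal of $\LReals \times \UReals$, and the computable map $\id \colon \LReals \times \UReals \to \Reals$ defined on the diagonal returns $\mu(U) \in \Reals$.

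The only point requiring care — and the reason the hypothesis $U \in \AD{\mu}(X)$ rather than merely $U \in \Opens(X)$ is essential — is securing the matching upper bound: for a general open set one can only lower semi-compute $\mu(U)$, with no control from above, and the complement $X \setminus U$ is closed, whose measure is not in general upper semi-computable. The $\mu$-continuity condition $\mu(\partial U) = 0$ is exactly what guarantees an open $V \subseteq X \setminus U$ carrying the full complementary mass $\mu(X \setminus U)$, and the representation hands us such a $V$ directly; lower semi-computing $\mu(V)$ then pins $\mu(U)$ from above. Beyond assembling these observations with the already-established computable operations on $\LReals$, $\UReals$, and $\Reals$, no genuine obstacle remains.
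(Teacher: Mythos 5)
Your proposal is correct and follows essentially the same route as the paper's proof: extract the open set $V$ witnessing the $\mu$-continuity of $U$ from the name of $U$, lower semi-compute both $\mu(U)$ and $\mu(V)$, and recover $\mu(U) = 1 - \mu(V)$ as an exact real via the diagonal map $\LReals \times \UReals \to \Reals$. The paper phrases the extraction of $V$ through parts (4) and (6) of \cref{adcompop} rather than by unpacking the representation directly, but the argument is the same.
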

\begin{proof}
	Let $U \in \AD {\mu}(X)$.  Then $U$ and some $V \in \icmu U$,
	are elements of $\Opens(X)$ computable from $U$
	by \cref{adcompop}, parts (4) and (6).
	Therefore $\mu(U) \in \LReals$ and $1-\mu(V) \in \UReals$ are computable from $U$ and $V$, respectively.  But $\mu(U) = 1 - \mu(V)$, hence $\mu(U) \in \Reals$ is computable from $U$. 
\end{proof}

\begin{lemma}[{\cite[Lem.~2.15]{MR2410914} and \cite[Lem.~5.1.1]{MR2519075}}]
\label{lem-boss}
Let $(S, d_S, \<s_i\>_{i\in\Nats})$ be a computable metric space, 
let $\mu\in \ProbMeasures(S)$ be a Borel probability measure on $S$, 
and let $B_\mu$ be the set of all sequences 
$\<\epsilon_i\>_{i\in \Nats}$ of positive reals 
such that 
\[
\bigl\{\Balls(s_j, \epsilon_i)\bigr\}_{i,j\in\Nats}
\]
forms a subbasis for $S$ comprised of $\mu$-continuity sets.
Then the multi-valued map from $\ProbMeasures(S) \multito \ContFuncs(\Nats,\Reals_+)$ taking $\mu$ to $B_\mu$ is computable.
\end{lemma}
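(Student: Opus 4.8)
The plan is to produce, uniformly from a name for $\mu$, a sequence $\<\epsilon_i\>_{i\in\Nats}$ of positive reals that is dense in $(0,\infty)$ and satisfies $\mu(\{x\in S : d_S(x,s_j)=\epsilon_i\})=0$ for all $i,j$. Since $\partial\Balls(s_j,\epsilon_i)\subseteq\{x : d_S(x,s_j)=\epsilon_i\}$, every such ball is then a $\mu$-continuity set; and since $\<s_j\>$ is dense while the radii accumulate at $0$, the balls $\Balls(s_j,\epsilon_i)$ form a basis (given $x$ and $\delta>0$, choose $s_j$ with $d_S(x,s_j)<\delta/3$ and $\epsilon_i\in(\delta/3,2\delta/3)$), hence a subbasis. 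So it suffices to show that, given rationals $0<a<b$, one can compute uniformly a real $r\in(a,b)$ with $\mu(\{x:d_S(x,s_j)=r\})=0$ for all $j$; applying this to an enumeration of all rational intervals and reindexing yields the full sequence.

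To handle all centers simultaneously, aggregate them with summable weights:
\[
\Phi(r) \defas \sum_{j\in\Nats} 2^{-j}\,\mu\bigl(\{x\in S : d_S(x,s_j)=r\}\bigr),
\]
so that $\Phi(r)=0$ exactly when $r$ is good for every $s_j$. For each fixed $j$, the radii with $\mu(\{d_S(\cdot,s_j)=r\})>0$ are the atoms of the pushforward of $\mu$ under $x\mapsto d_S(x,s_j)$, a finite measure on $[0,\infty)$, and hence form a countable set; taking the union over $j$ shows that $\{r:\Phi(r)>0\}$ is countable, so good radii are dense in $(a,b)$.

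The main obstacle is computability: by \cref{Lemma: id map from prob to val is continuous} the measure of an open set is only lower-semicomputable, which is exactly the wrong direction for certifying that a sphere is null. The resolution is to majorize $\Phi$ by an upper-semicomputable functional indexed by rational intervals. For rationals $c<d$ set $\bar\Lambda(c,d)\defas\sum_j 2^{-j}\mu(\{x : c\le d_S(x,s_j)\le d\})$. Each closed annulus is the complement of the computable open set $\{d_S(\cdot,s_j)<c\}\cup\{d_S(\cdot,s_j)>d\}$, so its measure lies in $\UReals$ computably, and the $2^{-j}$-weighted sum (with tail bounded by $2^{-J+1}$) is again upper-semicomputable. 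Two facts make this majorant useful. First, $\{x:d_S(x,s_j)=r\}\subseteq\{c\le d_S(\cdot,s_j)\le d\}$ for every $r\in(c,d)$, whence $\Phi(r)\le\bar\Lambda(c,d)$ for all $r\in(c,d)$. Second, as $c\uparrow r_0$ and $d\downarrow r_0$ the closed annuli decrease to the sphere $\{d_S(\cdot,s_j)=r_0\}$, so by continuity of measure from above and dominated convergence $\bar\Lambda(c,d)\to\Phi(r_0)$.

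Finally I would run a nested-interval search. Starting from $[a_0,b_0]=[a,b]$, at stage $n$ dovetail over all rational subintervals $[c,d]\subseteq(a_{n-1},b_{n-1})$ with $d-c\le (b_{n-1}-a_{n-1})/2$, waiting for the semidecidable event $\bar\Lambda(c,d)<2^{-n}$ (an upper real compared below a rational), and set $[a_n,b_n]\defas[c,d]$ for the first such interval found. Density of good radii together with the shrinking property of $\bar\Lambda$ guarantees that a qualifying subinterval exists at every stage (pick a good $r_0$ in the open interval and shrink a rational window around it), so the search always terminates. The intervals are nested with lengths $\le 2^{-n}(b-a)$, so their intersection is a single computable real $r_\ast$; and since $r_\ast\in(a_n,b_n)$ for every $n$, we get $\Phi(r_\ast)\le\bar\Lambda(a_n,b_n)<2^{-n}$, forcing $\Phi(r_\ast)=0$. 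Every step is uniform in the name for $\mu$ and in $(a,b)$, which gives the desired computable multi-valued map $\mu\mapsto B_\mu$.
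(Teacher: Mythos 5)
The paper does not prove this lemma at all---it is imported verbatim from Hoyrup--Rojas (the two cited references), so there is no in-paper proof to compare against. Your argument is correct and is essentially the standard proof from those sources: bad radii for each center are the atoms of a pushforward measure and hence countable, the sphere measure is majorized by the upper-semicomputable measure of a closed annulus (complement of a computable open set), and a nested-interval search driven by the semidecidable event $\bar\Lambda(c,d)<2^{-n}$ produces a computable good radius in every rational interval, uniformly in the name for $\mu$.
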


\begin{definition}
\label{almostdec-def}
Let $X$ be a computable metric space, let $\mu$ be a Borel probability measure on $X$,
and let $\CB {\mu}(X) \subseteq \ContFuncs(\Nats,\AD {\mu}(X))$ be the represented space of enumerations
 of countable bases of $X$ composed of $\mu$-continuity sets, 
where
$\delta(p) = B$ if and only $p=(p',p'')$ where $p'$ is a name for $B$ as an element of $\ContFuncs(\Nats,\AD {\mu}(X))$ and $p''$ is a name for the (multi-valued) right inverse of
the map
\[
{\textstyle\bigcup}^{B} \colon \ContFuncs(\Nats, \Nats) \to \Opens(X)
\]
taking $U$ to $\bigcup_{n \in \Nats} B(U(n))$.
Elements of $\CB {\mu}(X)$ are called $\mu$-continuous bases.
\end{definition}

Note that the map $\bigcup^{B}$ is necessarily computable from $B$ by \cref{arno-Prop-6}, part (3).
We have the following important corollary, first proved by Hoyrup and Rojas \cite[][Cor.~5.2.1]{MR2519075} in a non-relativized form.
\begin{corollary}
\label{measure-computable from basis}
Let $X$ be a computable metric space and let $B \in \CB {\mu}(X)$ be a $\mu$-continuous basis.
Then $\mu$ is computable from $\<\mu(\bigcup_{n \in I}B(n)\>_{I \subseteq \Nats, |I|< \w}$ and $B$.
\end{corollary}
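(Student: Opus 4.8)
The plan is to reduce the claim, via \cref{Lemma: id map from prob to val is continuous}, to the problem of lower semi-computing $\mu(O)$ uniformly in a name for an arbitrary open set $O$, and then to produce such lower approximations by exhausting $O$ from within by finite unions of basis elements whose measures are provided as input. Since the identity map $\ProbMeasures(X) \to \Valuations(X)$ and its inverse are both computable, it suffices to build a name for $\mu$ as an element of $\Valuations(X) \subseteq \ContFuncs(\Opens(X),\LReals)$; that is, to exhibit a single realizer that, uniformly in $B$, the given family $\<\mu(\bigcup_{n\in I}B(n))\>_{I}$, and a name for $O \in \Opens(X)$, outputs a name for $\mu(O) \in \LReals$.

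First I would unpack the data carried by a $\mu$-continuous basis. By \cref{almostdec-def}, a name for $B$ includes a name for a (multi-valued) right inverse of the map $\bigcup^{B}\colon \ContFuncs(\Nats,\Nats) \to \Opens(X)$. Applying this right inverse to $O$ produces a sequence $\<n_k\>_{k \in \w}$, computable from the joint name, with $O = \bigcup_{k\in\w} B(n_k)$. Writing $O_m \defas \bigcup_{k \le m} B(n_k)$ and $I_m \defas \{n_0,\dots,n_m\}$, I obtain a computable nondecreasing sequence of open sets with $O_m \uparrow O$, each $O_m$ being the union of the finitely many basis elements indexed by $I_m$. Each $O_m$ is moreover an open $\mu$-continuity set by part~(2) of \cref{adcompop}, so the relevant values are genuine reals rather than merely lower bounds.

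Next I would read the relevant measures off the supplied family: $\mu(O_m) = \mu(\bigcup_{n \in I_m} B(n))$ is exactly the entry of $\<\mu(\bigcup_{n\in I}B(n))\>_{I}$ located by computing the finite set $I_m$ from $\<n_k\>$, so each $\mu(O_m) \in \Reals$ is available uniformly. By continuity of $\mu$ from below, $\mu(O) = \sup_{m} \mu(O_m)$. Because $\<\mu(O_m)\>_m$ is nondecreasing with real (hence two-sidedly approximable) terms, a rational $q$ satisfies $q < \mu(O)$ if and only if $q < \mu(O_m)$ for some $m$, a semidecidable condition; this presents $\mu(O)$ as a computable element of $\LReals$. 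Uniformity in $O$, $B$, and the measure family then yields the desired valuation, and applying the computable inverse of \cref{Lemma: id map from prob to val is continuous} returns a name for $\mu \in \ProbMeasures(X)$.

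The genuinely routine parts are the two standard facts that $O_m \uparrow O$ forces $\mu(O_m) \uparrow \mu(O)$ and that the supremum of a computably indexed nondecreasing sequence of reals is lower semi-computable. The step requiring the most care --- and the closest thing to an obstacle --- is the bookkeeping that keeps everything uniform: the right inverse packaged with $B$ is multi-valued, so one must ensure the index sequence $\<n_k\>$, and hence every $I_m$ and every lookup into the measure family, is extracted by one realizer continuous in the joint name of $B$, the family, and $O$. This is precisely why the right inverse is built into the representation of $\CB{\mu}(X)$ in \cref{almostdec-def}, so the needed uniformity is available by design rather than by a separate argument.
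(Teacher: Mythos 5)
Your proposal is correct and follows essentially the same route as the paper's proof: reduce to lower semi-computing $\mu(O)$ uniformly in an open set $O$ (you via \cref{Lemma: id map from prob to val is continuous}, the paper via a relativized form of the same fact from Hoyrup--Rojas), use the right inverse of $\bigcup^{B}$ packaged in the representation of $\CB{\mu}(X)$ to write $O$ as an increasing union of finite unions of basis elements, and apply continuity of measures from below. The extra observations you add (that each finite union is a $\mu$-continuity set, and the explicit $\LReals$ bookkeeping) are harmless refinements of the same argument.
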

\begin{proof}
By a relativized version of \cite[Thm.~4.2.1]{MR2519075} it suffices to show that for any open set $U \in \Opens(X)$, 
we can lower semi-compute $\mu(U)$ from $\<\mu(\bigcup_{n \in I}B(n))\>_{I \subseteq \Nats, |I|< \w}$, 
$U$ and $B$.
By definition, we can compute, from $U$ and $B$, a function
$f_U\in\ContFuncs(\Nats, \Nats)$ such that $U = \bigcup_{n \in \Nats} B(f_U(n))$.
By the continuity of measures, 
$\mu(U) = \sup_{n \in \Nats} \mu(\bigcup_{i \leq n} B(f_U(i))$, which is an increasing sequence that is computable from $\<\mu(\bigcup_{n \in I}B(n))\>_{I \subseteq \Nats, |I|< \w}$, $B$, and $U$.
\end{proof}

\begin{lemma}\label{continuitysubbasis}
Let $T$ be a computable metric space.
The multi-valued map taking a Borel probability measure $\mu \in \ProbMeasures(T)$ 
to the set $\CB{\mu}(T)$ of $\mu$-continuous bases of $T$ is computable.
\end{lemma}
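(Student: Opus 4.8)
The plan is to assemble the basis and its right inverse from the preceding machinery. First I would invoke \cref{lem-boss} to compute, from a name for $\mu$, a sequence $\langle \epsilon_i \rangle_{i \in \Nats}$ of positive reals for which the balls $\{\Balls(s_j, \epsilon_i)\}_{i,j \in \Nats}$ form a subbasis of $T$ consisting of $\mu$-continuity sets. Each such ball $\Balls(s_j, \epsilon_i)$ is computable as an element of $\AD{\mu}(T)$: a name is the pair $(u,v)$ in which $u$ is its standard name in $\Opens(T)$ and $v$ names the open set $\{x \in T : d_T(x, s_j) > \epsilon_i\}$, which is a subset of the complement of the same $\mu$-measure precisely because the ball is a $\mu$-continuity set. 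Taking finite intersections of these balls --- indexed by finite subsets of $\Nats^2$, with the empty intersection read as $T$ --- and using the computability of $\cap \colon \AD{\mu}(T) \times \AD{\mu}(T) \to \AD{\mu}(T)$ from \cref{adcompop}(1), I obtain a computable map $B \in \ContFuncs(\Nats, \AD{\mu}(T))$. Since finite intersections of a subbasis form a basis, $B$ enumerates a basis of $T$ composed of $\mu$-continuity sets, and this supplies the first component $p'$ of a name in $\CB{\mu}(T)$.

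It remains to compute the second component $p''$, a multi-valued right inverse of $\bigcup^B \colon \ContFuncs(\Nats, \Nats) \to \Opens(T)$; that is, a procedure that, given any $W \in \Opens(T)$, enumerates indices $f(0), f(1), \dots$ with $\bigcup_n B(f(n)) = W$. Here I would first use the computable inverse of the ball-union map from \cref{cmsdefn}(3) to extract from $W$ a sequence of standard balls $\langle \Balls(\hat s_m, \hat q_m) \rangle_m$ (centers in the dense sequence, rational radii) with $W = \bigcup_m \Balls(\hat s_m, \hat q_m)$. I would then search in parallel over all basis elements $V = B(k)$ and all indices $m$ for a \emph{formal inclusion} certificate witnessing $V \subseteq \Balls(\hat s_m, \hat q_m)$, outputting the index $k$ whenever such a certificate is found. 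Soundness of any such certificate guarantees $\bigcup_n B(f(n)) \subseteq W$; the reverse inclusion --- completeness --- requires that every point of $W$ lie in some certified basis element.

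The main obstacle is precisely the design of this formal-inclusion relation so that it is simultaneously semi-decidable, sound, and complete. Containment of open sets is not semi-decidable in general, so one cannot simply test $V \subseteq \Balls(\hat s_m, \hat q_m)$ directly; the natural sound surrogate for a single ball is the triangle-inequality condition $d_T(s_{j}, \hat s_m) + \epsilon_{i} < \hat q_m$, but for a genuine finite intersection $V = \bigcap_l \Balls(s_{j_l}, \epsilon_{i_l})$ no single constituent need be formally included in the target even when $V$ is, so the single-ball test is incomplete (as already happens on $\Reals$ with a subbasis of large radii). The crux is therefore to certify containment of an intersection, which I would approach by working with the closed balls $\{x : d_T(x, s_{j_l}) \le \epsilon_{i_l}\}$ that contain $\overline V$ and seeking a finite triangle-inequality certificate for $\overline V \subseteq \Balls(\hat s_m, \hat q_m)$, together with a passage to a slightly smaller basis element when needed. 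Establishing completeness of the resulting enumeration --- that the certified elements still cover all of $W$ --- is the delicate step, and it is exactly here that the subbasis property from \cref{lem-boss} must be exploited, since the subbasis generates the topology and hence supplies, around each point of $W$, basis elements fine enough to admit a certificate. I note finally that in the ultrametric spaces relevant to our applications, where balls are clopen and pairwise disjoint, formal inclusion is exact and this obstacle disappears; the difficulty is genuinely the general metric case.
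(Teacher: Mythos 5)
Your construction of the basis itself is the same as the paper's: apply \cref{lem-boss} to get a subbasis of $\mu$-continuity balls $\Balls(s_j,\epsilon_i)$, promote each to an element of $\AD{\mu}(T)$ by exhibiting an open full-measure subset of its complement, and close under finite intersections via \cref{adcompop}(1). (Minor caveat: your witness $\{x \st d_T(x,s_j)>\epsilon_i\}$ having full measure in the complement uses that the sphere $\{d_T(\cdot,s_j)=\epsilon_i\}$ is $\mu$-null, which is what the construction behind \cref{lem-boss} actually provides; $\mu$-continuity of the open ball alone does not imply it. The paper instead enumerates an open subset of the complement from formal-disjointness certificates, but both versions rest on the same property of the chosen radii.)

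The genuine gap is in the right inverse, where you have misidentified the goal and then left the resulting (harder) problem unresolved. A right inverse of $\bigcup^{B}$ need only produce, from $W\in\Opens(T)$, \emph{some} sequence of basis indices whose union is exactly $W$; it does not need to enumerate every basis element contained in $W$. So there is no need to certify inclusion of genuine finite intersections at all --- the "main obstacle" you describe, and then only gesture at resolving, is not part of the problem. The argument closes as follows: write $W=\bigcup_m \Balls(\hat s_m,\hat q_m)$ using \cref{cmsdefn}(3), and enumerate exactly those \emph{single} subbasis balls $\Balls(t_j,\epsilon_k)$ (which are among the finite intersections, hence carry basis indices) admitting a certificate $d_T(t_j,\hat s_m)+\epsilon_k<\hat q_m$ for some $m$. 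Soundness is immediate. The union is all of $W$ because the sequence $\<\epsilon_k\>_{k\in\Nats}$ produced by \cref{lem-boss} is dense in the positive reals and the $t_j$ are dense in $T$: given $x\in \Balls(\hat s_m,\hat q_m)$, put $\delta=(\hat q_m-d_T(x,\hat s_m))/3$, pick $t_j$ with $d_T(t_j,x)<\delta$ and $\epsilon_k\in(\delta,2\delta)$; then $x\in \Balls(t_j,\epsilon_k)$ and $d_T(t_j,\hat s_m)+\epsilon_k<\hat q_m$. This is exactly the (tersely stated) final step of the paper's proof, and with it your argument would be complete; without it, the reduction to "certifying $\overline V\subseteq \Balls(\hat s_m,\hat q_m)$ for intersections $V$" is both unnecessary and unfinished.
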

\begin{proof}
Let $\tmetric$ denote the computable metric on $T$.
By Lemma~\ref{lem-boss},
we can compute, from $\mu$, 
a dense collection $\<\epsilon_{k}\>_{k \in \Nats}$ of positive reals
such that $\< \Balls(t_i, \epsilon_k)\>_{i, k \in\Nats}$ is a subbasis composed of $\mu$-continuity sets. 
Let $V(i, k)$ be the interior of the complement of the open ball $\Balls(t_i, \epsilon_k)$.
For all $b \in T$, it holds that $b \in V(i,k)$ if and only if there are $j,k' \in \Nats$ such that 
\[
\tmetric(t_j, b) < \epsilon_{k'} \quad \text{and} \quad
\tmetric(t_{p_0(n)},t_j) > \epsilon_{p_1(n)} + \epsilon_{k'},
\] 
i.e., $b \in \Balls(t_j,\epsilon_{k'}) \subseteq V(i,k)$. 
The set of all such pairs is computably enumerable from $\mu$, uniformly in $i,k$, 
and so $V(i,k)$, viewed as an element of $\Opens(T)$, is computable from $\mu$, uniformly in $i,k$. 
Therefore, $\Balls(t_i, \epsilon_k)$, viewed as an element of $\AD {\mu}(T)$, is computable from $\mu$, uniformly in $i, k$.
Finite intersection is a computable operation on $\AD {\mu}(T)$ (by part (1) of \cref{adcompop}),
and so we can compute an enumeration $B \in \ContFuncs(\Nats, \AD {\mu}(T))$ of a basis for $T$ from $\mu$.

Finally, by the definition of a computable metric space, from every $U \in \Opens(T)$, we can compute a collection of balls of rational radii whose union is $U$, and as $\<\epsilon_k\>_{k \in \Nats}$ is dense and computable from $\mu$, 
it is clear that the right-inverse of $\bigcup^B$ is computable from $\mu$, and so $B$, viewed as an element of $\CB{\mu}(T)$, is computable from $\mu$.
\end{proof}

\subsection{Weihrauch reducibility}

Let $\pairing{\cdot,\cdot}\colon \Baire \times \Baire \to \Baire$ be a computable bijection with a computable inverse,
and let
\[
\id \colon \Baire \to \Baire
\]
denote the identity map on $\Baire$. For $f,g \colon \Baire \to \Baire$, 
write $\pairing {f,g}$ to denote the function mapping $x \in \Baire$ to $\pairing {f(x),g(x)} \in \Baire$.

\begin{definition}
	\label{Weihrauch-defone}
Let $\mathcal{F}$ and $\mathcal{G}$ be sets of partial functions 
from $\Baire$ to $\Baire$. 
Then $\mathcal{F}$ is \defn{Weihrauch reducible} to $\mathcal{G}$, written $\mathcal{F} \leW \mathcal{G}$, when there are 
computable partial functions $H, K:\subseteq \Baire \to \Baire$ such that
\[
(\forall G \in \mathcal{G})(\exists F \in \mathcal{F}) \  F = H \circ \pairing{ \id, G \circ K }.
\]
The set
$\mathcal{F}$ is \defn{strongly Weihrauch reducible}
to $\mathcal{G}$, written $\mathcal{F}\lesW \mathcal{G}$ when there are computable
partial functions $H, K:\subseteq  \Baire \to \Baire$ such that
\[
(\forall G \in \mathcal{G})(\exists F \in \mathcal{F}) \  F = H \circ G \circ K.
\]
\end{definition}

Recall that $F \vdash f$ means that $F :\subseteq \Baire \to \Baire$ is a realizer for a map $f$ between represented spaces.

\begin{definition}
	\label{Weihrauch-deftwo}
Let $f, g$ be multi-valued functions on (not necessarily the same)
represented spaces. Then $f$ is 
\defn{Weihrauch reducible} to
$g$, written $f \leW g$, when 
\[
\{F \st F \vdash f\} \leW \{G \st G \vdash g\}.
\]
Two functions $f$ and $g$ are 
Weihrauch equivalent (written $f \eqW g$) when $f \leW g$ and $g \leW f$.
	Weihrauch reducibility is transitive, and 
the  \defn{Weihrauch degree} of a function is the $\eqW$-class of the function.

Similarly, $f \lesW g$ when $\{F \st F \vdash f\} \lesW \{G \st G \vdash g\}$, and the \defn{strong Weihrauch degree} of a function is its $\eqsW$-class.
\end{definition}

When a function $f$ has the property that $\textrm{id} \times f \eqsW f$, where $\textrm{id}$ is the identity on $\Baire$, the function $f$ is 
called
a \defn{cylinder}. We will make use of the following fact on several occasions.

\begin{lemma}[{\cite[Cor.~3.6]{MR2791341}}]
	\label{cylinder-W-sW-lemma}
Let $f$ and $g$ be multi-valued functions on represented spaces, and suppose that $f$ is a cylinder. If $g \leW f$, then $g \lesW f$.
\end{lemma}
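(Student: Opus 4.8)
The plan is to factor the desired strong reduction through the function $\id \times f$. Since $f$ is a cylinder we have $\id \times f \eqsW f$, and in particular $\id \times f \lesW f$; because strong Weihrauch reducibility is transitive (one composes the witnessing pairs of computable maps exactly as one does for $\leW$), it suffices to prove the single implication
\[
g \leW f \implies g \lesW (\id \times f).
\]
So I would fix computable $H, K$ witnessing $g \leW f$, i.e., so that for every realizer $G \vdash f$ there is a realizer $F \vdash g$ with $F = H \circ \pairing{\id, G \circ K}$. I then claim that $H$ together with $K' := \pairing{\id, K}$ witnesses $g \lesW (\id \times f)$, and I verify this at the level of realizers.

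First I would record the shape of an arbitrary realizer of $\id \times f$. Taking the standard representation of Baire space in which the identity is the only realizer of $\id \colon \Baire \to \Baire$, any realizer $\tilde G \vdash (\id \times f)$ must preserve its first coordinate: on a valid input $\pairing{a,b}$ it outputs $\pairing{a, \sigma(a,b)}$, where $\sigma(a,b)$ is some name with $\delta(\sigma(a,b)) \in f(\delta(b))$. Consequently, for any input name $p$,
\[
\bigl(H \circ \tilde G \circ K'\bigr)(p) = H\bigl(\tilde G(\pairing{p, K(p)})\bigr) = H\bigl(\pairing{p,\ \sigma(p, K(p))}\bigr),
\]
and I must check that the right-hand side is always a valid $g$-name for $\delta(p)$.

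The crux, and the step I expect to be the main obstacle, is precisely that $\sigma(a,b)$ may depend on the passed-through coordinate $a$, so $\tilde G$ need not act as a \emph{single} realizer of $f$ in its second coordinate; one cannot simply invoke the hypothesis with one fixed $G$. I would resolve this pointwise: for each fixed $p$, freezing the first coordinate produces a genuine realizer $G_p := \sigma(p, \pars) \vdash f$ (since for every valid $f$-input name $b$ we have $\delta(\sigma(p,b)) \in f(\delta(b))$). Applying the hypothesis $g \leW f$ to this particular realizer $G_p$ yields some $F_p \vdash g$ with $F_p = H \circ \pairing{\id, G_p \circ K}$, and evaluating at $p$ gives $F_p(p) = H(\pairing{p,\ \sigma(p,K(p))})$, which is a valid $g$-name for $\delta(p)$ because $F_p$ realizes $g$. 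As $p$ was arbitrary, $H \circ \tilde G \circ K'$ is a realizer of $g$, establishing $g \lesW (\id \times f)$; combined with the cylinder property $\id \times f \lesW f$ and transitivity of $\lesW$, this gives $g \lesW f$.
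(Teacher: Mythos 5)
Your proof is correct. The paper does not prove this lemma at all --- it simply cites \cite[Cor.~3.6]{MR2791341} --- and your argument is exactly the standard proof of that corollary: reduce strongly to $\id \times f$ via $K' = \pairing{\id,K}$ and then use $\id\times f \lesW f$ together with transitivity of $\lesW$. You also correctly identify and resolve the one genuinely delicate point, namely that a realizer $\tilde G$ of $\id\times f$ may let its second output coordinate depend on the passed-through first coordinate, which you handle by freezing $p$ and observing that $\sigma(p,\pars)$ is itself a realizer of $f$ to which the hypothesis $g\leW f$ applies.
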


\subsection{Operators}
\subsubsection{The disintegration operator}

For computable topological spaces $S$ and $T$, recall the disintegration
map $\DD_{S, T}\colon \ACD_{S, T} \to \ContFuncs(T, \ProbMeasures(S))$.
In what follows, we will write $\DD$ for 
$\DD_{[0,1], [0,1]}$.
One of our main theorems is a characterization of the strong Weihrauch degree of $\DD$,
showing that $\DD \eqsW \Lim$.  Before formally introducing the operator $\Lim$, we first describe an equivalent operator $\EC$.

\subsubsection{The $\EC$ operator}
\label{the-EC-operator}

The $\EC$ operator can be thought of as taking an enumeration of a set of
natural numbers to its characteristic function.
\begin{definition}
\label{EC definition}
$\EC$ 
is the identity operator 
\[
\EC \colon \ContFuncs(\Nats,\Sier) \to \ContFuncs(\Nats,\ZO)
\]
that maps
a function
$\ContFuncs(\Naturals, \Sier)$
to itself in 
$\ContFuncs(\Naturals, \ZO)$.
\end{definition}
The following result is folklore.
\begin{lemma}
	\label{EC-is-a-cylinder}
	The operator $\EC$ is a cylinder.
\end{lemma}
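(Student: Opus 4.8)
The plan is to show that $\EC$ is a cylinder by verifying the defining property $\id \times \EC \eqsW \EC$, where $\id$ is the identity on $\Baire$. Recall that $\EC$ is the identity operator $\ContFuncs(\Nats,\Sier) \to \ContFuncs(\Nats,\ZO)$, reinterpreting a point of $\ContFuncs(\Nats,\Sier)$ (an enumeration of a subset of $\Nats$, given by which coordinates eventually fire in $\Sier$) as its characteristic function in $\ContFuncs(\Nats,\ZO)$. One direction is immediate: since $\EC \lesW \id \times \EC$ always holds (one can recover $\EC$ from $\id \times \EC$ by supplying a dummy first coordinate and projecting away the identity part, via computable $H$ and $K$), the whole content is in the reduction $\id \times \EC \lesW \EC$.

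First I would exhibit the computable pre- and post-processing functions $K$ and $H$ witnessing $\id \times \EC \lesW \EC$. Given a name $\pairing{p, q}$ for an input $(x, s) \in \Baire \times \ContFuncs(\Nats,\Sier)$, the idea is to encode the raw Baire-space datum $x$ into the \emph{same kind} of object that $\EC$ consumes, namely an enumeration of a subset of $\Nats$, and to interleave this with the genuine $\EC$-instance $s$. Concretely, $K$ should produce a single name in $\ContFuncs(\Nats,\Sier)$ on a partitioned index set: on the ``even'' indices it places a $\Sier$-enumeration that transparently records the successive values of $x$ (so that the exact bits of $x$ are recoverable once the corresponding $\ZO$-values are known), and on the ``odd'' indices it copies $s$ unchanged. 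Applying $\EC$ to this single enumeration returns its characteristic function in $\ContFuncs(\Nats,\ZO)$. Then $H$ reads off the even-indexed $\ZO$-values to reconstruct $x$ verbatim and reads off the odd-indexed $\ZO$-values to recover $\EC(s)$, and outputs the pair $\pairing{x, \EC(s)}$. Because $\EC$ is an identity operator that changes only the representation (from the Sierpi\'nski ``semidecision'' encoding to the decided characteristic-function encoding), this interleaving is lossless and both $H$ and $K$ are plainly computable; this gives a \emph{strong} reduction, as required, since $H$ and $K$ do not need separate access to the original input.

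The one point deserving care---and the main obstacle---is ensuring that the encoding of the arbitrary Baire datum $x$ into a $\Sier$-enumeration is faithfully invertible through the $\Sier \to \ZO$ transition, \emph{without} requiring the decided values $\EC$ produces to do any work that a mere semidecision could not. The key observation is that a value $n = x(k)$ of a Baire-space point can be encoded as the single index that the corresponding $\Sier$-coordinate block eventually fires at, so that $\EC$'s output characteristic function literally marks that index with a $1$; thus recovering $x$ from the $\ZO$-output is a matter of locating, in each block, the unique fired coordinate, which is computable. Once this faithful encode/decode pair is in place, verifying $F = H \circ \EC \circ K$ for the appropriate realizers $F \vdash \id \times \EC$ is routine, and combined with the trivial reverse reduction yields $\id \times \EC \eqsW \EC$, establishing that $\EC$ is a cylinder.
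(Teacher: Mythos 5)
Your proof is correct, but it takes a different route from the paper. The paper's proof is a two-line citation argument: by results of Brattka and Gherardi, $\EC$ is strongly Weihrauch equivalent to the parallelization of $\mathrm{LPO}$, and parallelizations are always cylinders. You instead verify the defining property $\id \times \EC \eqsW \EC$ directly, and the construction is sound: the only substantive direction is $\id \times \EC \lesW \EC$, and your interleaving trick works because an arbitrary $x \in \Baire$ can be coded into a $\Sier$-enumeration block-by-block, with block $k$ firing exactly at index $x(k)$, so that the $\ZO$-valued output of $\EC$ determines $x(k)$ as the unique index in block $k$ carrying a $1$, recoverable by an unbounded but terminating search; meanwhile the genuine $\EC$-instance rides along on the complementary indices, and the post-processor $H$ needs no access to the original input, which is exactly what a \emph{strong} reduction requires. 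What your approach buys is a self-contained, elementary argument that makes explicit why $\EC$ can absorb a raw Baire-space side channel (the same coding idea that underlies the cited general fact that parallelizations are cylinders); what the paper's approach buys is brevity and the placement of $\EC$ within the standard $\mathrm{LPO}$-parallelization picture, which it reuses elsewhere (e.g., in relating $\EC$ to $\Lim$).
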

\begin{proof}
By \cite[Lem.~6.3]{MR2791341}, the operator $\EC$ is strongly equivalent to the parallelization of LPO, which by \cite[Prop.~6.5]{MR2791341} is a cylinder.
\end{proof}

Note that $\EC$ applied to the enumeration of some set yields an output that is computable from the halting problem relative to that enumeration. Furthermore, every Turing degree contains an enumeration such that applying $\EC$ yields a set in the Turing jump of that degree.

\subsubsection{The $\Lim$ operator}
\label{limopsection}

Limit operators are fundamental in the program of calibrating the difficulty of analytic problems. For several problems within analysis that are equivalent to a limit operator, see \cite[\S2.3]{PaulyFouche}.

Furthermore, the limit operator on a sufficiently complicated space is equivalent to $\EC$ (see Lemma~\ref{ECeqWLim}), and hence plays a role among the Weihrauch degrees
somewhat analogous to that of the halting set within Turing computability.

\begin{definition}
\label{Lim definition}
Let $S$ be a metric space, and define
$\Lim_{S} : \subseteq \ContFuncs(\Nats,S) \to S$ to be the partial map 
taking Cauchy sequences to their limit points.
Write $\Lim$ to denote $\Lim_{\Cantor}$.
\end{definition}
	
We will make use of the following result in \cref{specific}.
\begin{lemma}[{\cite[Prop.~9.1]{MR2099383}}] 
	\label{compembeds}
	Suppose that $\Cantor$ computably embeds into a computable metric space $S$. Then $\Lim_S \eqsW \Lim$.
\end{lemma}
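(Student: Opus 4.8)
The plan is to prove the two strong reductions $\Lim_S \lesW \Lim$ and $\Lim \lesW \Lim_S$ separately, noting that only the second uses the embedding hypothesis while the first holds for every computable metric space $S$. I would organize both bounds around a single elementary principle: if $X$ and $Y$ are computable metric spaces and $\iota\colon X \to Y$ is a computable embedding (a computable map admitting a computable inverse on its image), then $\Lim_X \lesW \Lim_Y$. Indeed, being computable between admissible spaces, $\iota$ is topologically continuous, so it carries any convergent---hence Cauchy---sequence $\langle x_n\rangle$ in $X$ to a convergent sequence $\langle \iota(x_n)\rangle$ in $Y$ with $\Lim_Y\langle \iota(x_n)\rangle = \iota\bigl(\Lim_X\langle x_n\rangle\bigr)$. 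Taking $K$ to apply $\iota$ termwise and $H$ to apply $\iota^{-1}$ (legitimate because the limit lies in the image of $\iota$) exhibits a reduction of the strong form $\Lim_X = H \circ \Lim_Y \circ K$.

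Applying this principle to the hypothesized computable embedding $\Cantor \hookrightarrow S$ immediately gives $\Lim = \Lim_{\Cantor} \lesW \Lim_S$, the lower bound. For the reverse inequality I would factor through Baire space, writing $\Lim_S \lesW \Lim_{\Baire} \lesW \Lim_{\Cantor} = \Lim$. The second step $\Lim_{\Baire} \lesW \Lim_{\Cantor}$ is another instance of the principle, using the standard computable embedding $\beta\colon \Baire \to \Cantor$ with $\beta(f) = 0^{f(0)} 1 0^{f(1)} 1 \cdots$, which has a computable inverse on its image and preserves convergence (eventual agreement of longer and longer prefixes of $f$ becomes eventual agreement of longer and longer prefixes of $\beta(f)$).

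The crux, and the step I expect to be the main obstacle, is $\Lim_S \lesW \Lim_{\Baire}$ for an arbitrary computable metric space $S$. Here I am given a Cauchy sequence $\langle p_n\rangle$ in $S$ with limit $p$, and from its name I can compute, uniformly in $n$ and $k$, an index $a(n,k)$ of a dense point with $d_S(s_{a(n,k)}, p_n) < 2^{-k}$. The goal is to emit a name for $p$, i.e.\ a function $k \mapsto m_k$ with $d_S(s_{m_k}, p) \le 2^{-k}$, realized as the $\Lim_{\Baire}$ of a computable sequence $\langle y_j\rangle_j$ in $\Baire$. The idea is to locate, for each precision level $k$, a tail of $\langle p_n\rangle$ that has already settled: at stage $j$ I let $y_j(k)$ record $a(n,k{+}2)$ for the least $n \le j$ whose approximate pairwise distances $d_S(s_{a(n',k+2)}, s_{a(n'',k+2)})$ over $n \le n', n'' \le j$ all test below a fixed threshold (using rational approximations to $d_S$). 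Because $\langle p_n\rangle$ is Cauchy, for each $k$ some genuinely stable tail exists, every index below the least such tail eventually fails the test, and so $y_j(k)$ stabilizes as $j \to \infty$; thus $\langle y_j\rangle$ converges coordinatewise---hence is Cauchy in $\Baire$---to a limit $y$ from which a valid name for $p$ can be read off after a routine reindexing to enforce fast convergence. Verifying that the stabilized index is within $2^{-k}$ of $p$ and that the least-stable-tail selection really does stabilize is the only delicate bookkeeping; everything else is termwise computable.

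Finally, since strong Weihrauch reductions compose, chaining $\Lim_S \lesW \Lim_{\Baire} \lesW \Lim$ with the lower bound $\Lim \lesW \Lim_S$ yields $\Lim_S \eqsW \Lim$. I would note that no appeal to the cylinder lemma is needed, as each constituent reduction is already produced in the strong form $H \circ (\cdot) \circ K$.
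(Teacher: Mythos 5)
The paper offers no proof of this lemma at all --- it is stated with a bare citation to Brattka's Prop.~9.1 --- so there is nothing internal to compare against and your argument must stand on its own. Its architecture is the standard one and is sound: the ``computable embedding'' principle gives $\Lim = \Lim_{\Cantor} \lesW \Lim_S$ (correctly using completeness of $\Cantor$, so that Cauchy sequences converge and the limit of the image sequence lies in the image of $\iota$, where $\iota^{-1}$ is available), and the upper bound factors as $\Lim_S \lesW \Lim_{\Baire} \lesW \Lim_{\Cantor}$, with each constituent reduction genuinely in the strong form $H \circ (\cdot) \circ K$, so you are right that no appeal to \cref{cylinder-W-sW-lemma} is needed.

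The one step that, as literally written, can fail is the stabilization of the tail-selection in $\Lim_S \lesW \Lim_{\Baire}$. If ``tests below a fixed threshold'' means comparing a stage-$j$ rational approximation of $d_S(s_{a(n',k+2)}, s_{a(n'',k+2)})$ against a single sharp threshold $\theta$, then a pair whose true distance equals $\theta$ exactly will pass and fail infinitely often, the selected index $n_j(k)$ need not be monotone in $j$, and $y_j(k)$ need not converge. The routine repair is to make rejection a one-sided, monotone event (a pair is \emph{rejected} by stage $j$ once some approximation certifies $d > \theta$, and stays rejected) and to separate the thresholds: (a) on a genuine Cauchy tail no pair is ever rejected --- for this you must also absorb the $2 \cdot 2^{-(k+2)}$ slop incurred by replacing $p_{n'}$ with $s_{a(n',k+2)}$, so the dense-point precision has to be taken well below $\theta$, not merely at $2^{-(k+2)}$ --- while (b) a never-rejected pair still certifies $d \le \theta$, which is what yields $d_S(s_{y(k)}, p) \le 2^{-k}$ for the stabilized value. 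With rejection monotone and the required window $[n,j]$ growing, $n_j(k)$ is nondecreasing and bounded by the Cauchy modulus, hence stabilizes. Alternatively, you could sidestep this bookkeeping by imitating the paper's own technique in \cref{DleLim}: enumerate $F = \{(m,n,k) \st d_S(p_m,p_n) > 2^{-k}\}$, apply $\EC$ to $\xi(n,k) = [\exists m \ge n \ (m,n,k) \in F] \in \Sier$, and read off a rapidly converging subsequence; this gives only $\leW$ but upgrades to $\lesW$ via \cref{Lim-is-a-cylinder} and \cref{cylinder-W-sW-lemma}.
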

In both Sections~\ref{continuous} and \ref{specific}, we will use the strong Weihrauch equivalence of $\EC$ and $\Lim$.

\begin{lemma}[{\cite[Fact~3.5]{Brattka2012623}}]
\label{Lim-is-a-cylinder}
	The operator $\Lim$ is a cylinder.
\end{lemma}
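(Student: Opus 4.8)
The plan is to verify the defining property of a cylinder directly, i.e.\ that $\id \times \Lim \eqsW \Lim$, where $\id$ is the identity on $\Baire$. One inclusion is immediate: given any realizer $G$ of $\id \times \Lim$, I would let $K$ pair the name of the input Cauchy sequence with a fixed trivial name (say for $0^\omega$) and let $H$ project onto the second component; since pairing and projection are computable, $H \circ G \circ K$ realizes $\Lim$, witnessing $\Lim \lesW \id \times \Lim$.

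The substantive direction is $\id \times \Lim \lesW \Lim$. The key observation is that in a strong reduction $H \circ G \circ K$ the witness $H$ sees only the output of the $\Lim$-realizer $G$, so the $\Baire$-coordinate that the identity must preserve cannot merely be carried along but has to be \emph{encoded into the limit point itself}. Accordingly, I would fix a computable injection $\langle\,\cdot\,\rangle \colon \Baire \to \Cantor$, for instance $p \mapsto 0^{p(0)}10^{p(1)}1\cdots$, whose partial inverse is computable on those strings containing infinitely many $1$'s; note $\langle p\rangle$ always lies in the domain of this inverse, since each $p(i)$ is finite.

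Then I would define $K$ to send a name for $(p,\langle x_n\rangle_n) \in \Baire \times \dom(\Lim)$ to a name for the sequence $\langle (a_n, x_n)\rangle_n$ in $\Cantor \times \Cantor$, where $a_n$ is the length-$n$ prefix of $\langle p\rangle$ padded with zeros, so that $a_n \to \langle p\rangle$ with $d(a_n,\langle p\rangle)\le 2^{-n}$. This sequence is computable from the input and is Cauchy, converging to $(\langle p\rangle, \lim_n x_n)$. Since $\Cantor$ computably embeds into $\Cantor \times \Cantor$, \cref{compembeds} gives $\Lim_{\Cantor\times\Cantor} \eqsW \Lim$, so a realizer $G$ of $\Lim$ (transported along this equivalence) returns a name for $(\langle p\rangle, \lim_n x_n)$. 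Finally $H$ decodes the first coordinate back to $p$ via the computable inverse of $\langle\,\cdot\,\rangle$ and passes the second coordinate through, yielding a name for $(p,\lim_n x_n)$. Thus $H \circ G \circ K$ realizes $\id \times \Lim$, giving $\id \times \Lim \lesW \Lim$, and together with the easy inclusion, $\id \times \Lim \eqsW \Lim$.

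I expect the main obstacle to be conceptual rather than computational: recognizing that the identity coordinate must be folded into the limit point rather than routed around $G$, and then selecting an encoding whose decoding is computable on exactly the points that can arise as outputs. Once the encoding is fixed, checking that $K$ and $H$ are computable and that the displayed sequence is Cauchy with the asserted limit is routine.
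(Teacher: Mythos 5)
Your proof is correct, but note that the paper does not actually prove this lemma at all: it simply cites it as Fact~3.5 of \cite{Brattka2012623}. What you have written is essentially the standard self-contained argument behind that cited fact. The easy direction $\Lim \lesW \id \times \Lim$ is fine, and you correctly identify the crux of the substantive direction: in a \emph{strong} reduction $H \circ G \circ K$ the outer map $H$ never sees the original input, so the $\Baire$-coordinate must be smuggled into the limit point itself. Your encoding $p \mapsto 0^{p(0)}10^{p(1)}1\cdots$ works because its image consists exactly of sequences with infinitely many $1$'s, on which the inverse is computable, and because a name for a point of the computable metric space $\Cantor$ (a fast Cauchy sequence of eventually-zero ideal points) determines the digit sequence computably, so $H$ can recover $p$. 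The appeal to \cref{compembeds} to pass from $\Lim_{\Cantor\times\Cantor}$ to $\Lim$ is legitimate (one could equally use the computable interleaving homeomorphism $\Cantor\times\Cantor\cong\Cantor$ directly), and composing the computable pre- and post-processors of that strong equivalence with your $K$ and $H$ preserves strongness. The only gain of the paper's route is brevity; yours makes the lemma self-contained at the cost of half a page.
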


\begin{lemma}
\label{ECeqWLim}
$\EC \eqsW \Lim$.
\end{lemma}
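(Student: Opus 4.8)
The plan is to prove the two strong reductions $\EC \lesW \Lim$ and $\Lim \lesW \EC$ separately, exploiting that both operators are cylinders (\cref{EC-is-a-cylinder} and \cref{Lim-is-a-cylinder}), so that it suffices to establish ordinary Weihrauch reductions in each direction and then upgrade them via \cref{cylinder-W-sW-lemma}; in fact, since we want strong equivalence, I would aim directly for strong reductions where convenient. Both directions amount to exhibiting computable pre- and post-processing maps $K$ and $H$ witnessing $F = H \circ G \circ K$ at the level of realizers, per \cref{Weihrauch-defone}.

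For $\EC \lesW \Lim$, first I would recall that $\EC$ takes a name $p \in \ContFuncs(\Nats,\Sier)$ — which semidecides, for each $n$, whether the $n$-th bit is $1$ — to the corresponding element of $\ContFuncs(\Nats,\ZO)$, i.e. its characteristic function. The idea is to convert the enumeration into a Cauchy sequence in $\Cantor$ whose limit encodes the characteristic function. Concretely, the preprocessing map $K$ reads the $\Sier$-name and produces, at stage $s$, the binary string recording which of the first $s$ coordinates have been observed to fire by stage $s$; padding appropriately gives a sequence of points of $\Cantor$. I would arrange the encoding (e.g. using a fast-converging presentation, flipping a designated bit only once a coordinate is confirmed) so that this sequence is Cauchy in the $\Cantor$ metric and its limit is exactly the characteristic sequence. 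Applying $\Lim = \Lim_{\Cantor}$ returns that limit, and the postprocessing $H$ decodes it back into a name in $\ContFuncs(\Nats,\ZO)$. The only subtlety is ensuring genuine (and, for strongness, padding-free) Cauchyness of the constructed sequence, which the monotone ``switch on and stay on'' encoding handles.

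For $\Lim \lesW \EC$, I would go the other way: given a Cauchy sequence $\<x_n\>$ in $\Cantor$, I need to produce its limit $x \in \Cantor$ using $\EC$. Each coordinate $x(k)$ of the limit stabilizes, so the preprocessing $K$ should compute, for each $k$, an enumeration (a $\Sier$-name) of the event ``coordinate $k$ has reached its final value,'' or more carefully, enumerate the stages at which the running approximation to bit $k$ changes. Feeding these into $\EC$ turns the semidecision data into honest characteristic-function data from which $H$ can read off each stabilized bit $x(k)$. Here one must use the Cauchy structure — not merely pointwise convergence — so that a computable modulus lets us phrase ``bit $k$ has settled'' as a $\Sier$-observable event, and I would likely use the parallelized-$\LPO$ characterization of $\EC$ from the proof of \cref{EC-is-a-cylinder} to make this clean.

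The main obstacle I expect is the $\Lim \lesW \EC$ direction, specifically matching the coordinatewise stabilization of a $\Cantor$-limit against what $\EC$ actually provides: $\EC$ delivers characteristic functions of $\Sier$-enumerated sets, so I must encode ``bit $k$ of the limit equals $1$'' as membership in such a set in a way that is both computable from the Cauchy name and correctly decodable regardless of how slowly the sequence converges. Getting this encoding right — and verifying that the resulting reduction is strong, i.e. of the bare form $H \circ \EC \circ K$ with no access to the original input in the post-processor — is where the care is needed; the cylinder property via \cref{cylinder-W-sW-lemma} is the safety net if only a non-strong reduction falls out directly.
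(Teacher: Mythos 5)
Your argument is correct in substance but takes a genuinely different route from the paper: the paper's proof of \cref{ECeqWLim} is a two-line citation, invoking the known equivalence $\EC \eqW \Lim$ from \cite[Prop.~9.1]{MR2099383} and then upgrading to strong equivalence via \cref{EC-is-a-cylinder,Lim-is-a-cylinder,cylinder-W-sW-lemma}, whereas you reprove the underlying equivalence by explicit reductions. Your $\EC \lesW \Lim$ direction is fine and in fact directly strong: the monotone approximations ($x_s(k)=1$ iff coordinate $k$ has fired by stage $s$) converge coordinatewise, hence in $\Cantor$, to the characteristic function, and $\Lim$ as defined in \cref{Lim definition} accepts arbitrary convergent sequences, with no modulus required. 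Your use of the cylinder lemma to recover strongness in the other direction is exactly the paper's own device. What your construction buys is self-containedness; what the paper's buys is brevity and a pointer to where the equivalence lives in the literature.

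One correction is needed in your $\Lim \leW \EC$ direction, where the description wavers between a correct and an incorrect encoding. The event ``coordinate $k$ has reached its final value by stage $j$'' is $\Pi^0_1$ in the name of the sequence, not $\Sigma^0_1$, so it is not itself $\Sier$-observable; what $K$ must produce is an enumeration of its \emph{complement}, namely the set of pairs $(k,j)$ such that $x_n(k)\neq x_j(k)$ for some $n>j$. From the $\EC$-image of that set one finds, for each $k$, some $j$ with $(k,j)$ outside the set and reads off $x_j(k)$; this last step consults the original sequence, which is why one gets a priori only a plain Weihrauch reduction and the cylinder lemma is genuinely needed. The alternative reading of your sentence --- enumerate the stages at which bit $k$ actually changes and apply $\EC$ --- does not work: that set is already decidable, and knowing the characteristic function of a finite set does not let you compute its maximum. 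Likewise, no ``computable modulus'' is available for a general element of the domain of $\Lim$, and none is required; convergence alone guarantees that for each $k$ some pair $(k,j)$ lies outside the enumerated set.
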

\begin{proof}
	By \cite[Prop.~9.1]{MR2099383}, we have 
	$\EC \eqW \Lim$.
(For how this aligns with the definitions above, see 
\cite[Def.~7.7]{MR2099383} in the case $k=1$.)
But $\EC$ and $\Lim$ are both cylinders by Lemmas~\ref{EC-is-a-cylinder} and \ref{Lim-is-a-cylinder}, and so by Lemma~\ref{cylinder-W-sW-lemma} we have $\EC \eqsW \Lim$.
\end{proof}

There are two key ingredients in the proof of the main theorem of \cref{continuous}:
$\EC \lesW \DD_{\Nats, [0,1]}$
and $\DD_{S,T} \lesW \EC$ for arbitrary
computable metric spaces $S$ and $T$. In \cref{specific}, we make use of reductions involving both $\EC$ and $\Lim_{\ProbMeasures(S)}$ for a particular space $S$.

\section{The disintegration operator on distributions admitting a unique continuous disintegration.}
\label{continuous}

We now present lower bounds (\cref{lower-uniform}) and upper bounds (\cref{upper-uniform}) for the conditioning operator in the context of measures admitting unique continuous disintegrations.  
The latter makes use of key results in \cref{Tjursection} pertaining to Tjur points.

\subsection{Lower Bound}
\label{lower-uniform}
The following proposition amounts to a relativized version of the main results of \cite{AFR10},
which demonstrates that
the conditional distribution of a computable random variable given another
need not be computable.
We thank an anonymous referee for suggesting a 
construction that greatly simplified the following
proof.

\begin{proposition}
\label{ECleD}
$\EC \lesW \DD_{\Nats, [0,1]}$.
\end{proposition}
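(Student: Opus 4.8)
The plan is to establish $\EC \lesW \DD_{\Nats, [0,1]}$ by encoding an arbitrary instance of $\EC$ — namely an enumeration $p \in \ContFuncs(\Nats,\Sier)$ of a subset $E \subseteq \Nats$, to be converted into its characteristic function — into a computable construction of a measure $\mu_p \in \ACD_{\Nats,[0,1]}$, and then to recover $\chi_E$ from the continuous disintegration $\DD_{\Nats,[0,1]}(\mu_p)$. Because $\EC$ is a cylinder (\cref{EC-is-a-cylinder}), it suffices by \cref{cylinder-W-sW-lemma} to exhibit a mere Weihrauch reduction $\EC \leW \DD_{\Nats,[0,1]}$; but since we want the \emph{strong} reduction directly, I would aim to arrange the pre- and post-processing so that the input name is fed only through $K$ (building $\mu_p$ computably from $p$) and the characteristic function is read off by $H$ applied to the disintegration alone, with no side-channel. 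The core idea, following the simplified referee construction alluded to in the text, is that the value of the disintegration at a single well-chosen point $x^\star \in [0,1]$ should encode membership in $E$: for each $n$, the mass that the conditional distribution $\kappa(x^\star)$ places on the natural number $n \in \Nats$ (the $S = \Nats$ coordinate) should be positive if and only if $n \in E$.

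First I would fix a computable sequence of disjoint basic dyadic intervals $I_n \subseteq [0,1]$ accumulating at a distinguished limit point $x^\star$ (say $x^\star = 0$, with $I_n$ shrinking toward it), so that the $T = [0,1]$ geometry provides a built-in notion of ``converging to $x^\star$.'' From the enumeration $p$ of $E$, I would computably build $\mu_p$ on $\Nats \times [0,1]$ as follows: the $[0,1]$-marginal puts positive weight spread across all the $I_n$ (and a point mass or small spread at $x^\star$ guaranteeing $x^\star \in \supp(\nu)$), while the conditional structure over $I_n$ concentrates the $\Nats$-coordinate on the single value $n$ — but the \emph{rate} at which the weight associated with index $n$ is activated is tied to the stage at which $n$ is enumerated into $E$. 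Concretely, I would weight things so that the limit $\lim_{x \to x^\star} \kappa(x)$ exists and assigns to $n$ a positive value exactly when $n \in E$, and assigns $0$ when $n \notin E$; outside the accumulation region the measure is arranged to be trivially well-behaved so that every point — including $x^\star$ — is a Tjur point and the disintegration is continuous everywhere, placing $\mu_p$ genuinely in $\ACD_{\Nats,[0,1]}$. The mechanism making $x^\star$ Tjur is that all approximating sequences $(V,B)$ must eventually lie in the intervals $I_n$ near $x^\star$, and the construction forces the net $\mu_p^{B}$ to converge to the same weak limit regardless of the route, using \cref{maintjurlemma} to certify continuity on a measure-one (indeed full) set.

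The extraction step for $H$ is then: given a name for the continuous disintegration $\kappa = \DD_{\Nats,[0,1]}(\mu_p) \in \ContFuncs([0,1],\ProbMeasures(\Nats))$, evaluate at $x^\star$ to obtain $\kappa(x^\star) \in \ProbMeasures(\Nats)$, and output for each $n$ the bit $[\kappa(x^\star)(\{n\}) > 0]$. Since $S = \Nats$ is discrete, the singletons $\{n\}$ are clopen continuity sets, so $\mu \mapsto \mu(\{n\})$ is computable into $\Reals$ on $\ProbMeasures(\Nats)$, and strict positivity of a real is semidecidable into $\Sier$; thus reading off $\chi_E(n)$ as an element of $\ZO$ requires knowing that the value is \emph{bounded away} from $0$ when positive and exactly $0$ otherwise — which is precisely what $\EC$-type reductions trade on, and which is why this recovers a genuine characteristic function rather than a mere semidecision. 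I expect the main obstacle to be the quantitative design of the weights: I must simultaneously (a) keep $\mu_p$ uniformly computable from the enumeration $p$ (so only the \emph{arrival stage} of $n$ in $E$ is available, not membership itself), (b) guarantee the weak limit at $x^\star$ exists for \emph{every} approximating net (the full Tjur condition, not just along the canonical balls), and (c) ensure the encoded weight $\kappa(x^\star)(\{n\})$ is a fixed positive computable constant when $n \in E$ and exactly $0$ when $n \notin E$, with enough separation that the post-processing $H$ can decide positivity correctly. Balancing the normalization so that the total conditional mass at $x^\star$ stays a probability measure while the per-$n$ weights remain comparably large will be the delicate bookkeeping, and verifying the Tjur property at $x^\star$ against arbitrary directed approximations — rather than just radial ones — is the conceptual crux.
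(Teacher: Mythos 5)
Your overall architecture matches the paper's: build, computably from the enumeration $p$, a measure $\mu_p \in \ACD_{\Nats,[0,1]}$ whose unique continuous disintegration, evaluated at one distinguished point, yields for each $n$ a real taking one of two separated values according to membership, from which the bit is extracted exactly. But the proposal stops exactly where the proof actually lives, and the design you do sketch would fail. First, if the conditional distribution over each interval $I_n$ concentrates the $\Nats$-coordinate on the single value $n$, then $\kappa(x)=\delta_n$ for $x\in I_n$, and as $x\to x^\star$ through the $I_n$ the point masses $\delta_n$ have no weak limit in $\ProbMeasures(\Nats)$; continuity at $x^\star$ fails and $\mu_p\notin\ACD_{\Nats,[0,1]}$. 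Second, and more centrally, you never give the mechanism that makes $\mu_p$ computable from the \emph{enumeration} alone while still encoding full membership information at $x^\star$: every finite approximation to $\mu_p$ must be consistent with both ``$n$ eventually appears'' and ``$n$ never appears,'' yet the limiting value at $x^\star$ must separate the two cases by a fixed computable gap. Third, your requirement that $\kappa(x^\star)(\{n\})$ be exactly $0$ for $n\notin E$ and a fixed positive constant for $n\in E$ creates a normalization problem (the total conditional mass $\sum_{n\in E}c_n$ depends on the non-computable set $E$), which you flag as ``delicate bookkeeping'' but do not resolve.

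The paper resolves all three points with one device. Writing $\iota(k)$ for the stage at which $k$ is enumerated (with $\iota(k)=\infty$ if never), it uses the density $f_{\iota(k)}(z)=1+\cos(2^{\iota(k)+1}\pi z)$ (constant $1$ when $\iota(k)=\infty$): its integral over any dyadic interval of scale $2^{-m}$ with $m<\iota(k)$ equals that of the constant density, so the measure is computable from the enumeration without deciding finiteness of $\iota(k)$. The mass for index $k$ is split between the two integers $2k$ and $2k+1$ with complementary densities $2^{-k-2}f_{\iota(k)}$ and $2^{-k-2}(2-f_{\iota(k)})$, so the conditional masses sum to $1$ at every $z$ with no normalization to perform, the marginal on $[0,1]$ is exactly Lebesgue measure (full support), and $\kappa(z)\{n\}=g_n(z)$ is continuous everywhere, so no Tjur-point verification against arbitrary directed approximations is needed --- uniqueness follows from full support via \cref{contdistareunique}. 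Finally $\kappa(0)\{2k\}=2^{-k-2}f_{\iota(k)}(0)$ equals $2^{-k-1}$ or $2^{-k-2}$ according to whether $k$ is ever enumerated: two separated computable values, from which the bit is read off exactly as you intend. You would need to supply a construction doing comparable work on all three fronts before your reduction goes through.
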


\begin{proof}
Let $x \in \ContFuncs(\Nats,\Sier)$. 
From $x$, we can compute a function $y \in \ContFuncs(\Nats \times \Nats, \ZO)$ such that $y(m,\cdot)$ is nondecreasing and, for every $m \in \Nats$, we have $x(m) = 1$ if and only if there exists an $n \in \Nats$ such that $y(m,n)=1$.  

Define the function $\iota : \Nats \to \Nats \cup \{\infty\}$ by $\iota(m) = \inf \{ k < \infty \st y(m,k) = 1 \}$,
with the convention that $\inf \, \emptyset = \infty$.

Let $f_k,f_\infty : [0,1] \to [0,2]$, for $k \in \Nats$, be defined by $f_\infty(z) = 1$ and $f_k (z) = 1 + \cos ( 2^{k+1} \pi z)$. 
\begin{claim}\label{indvcomp}
The measure $\mu_k$ with density $f_{\iota(k)}$ with respect to Lebesgue measure on $[0,1]$ is 
computable from $y$, uniformly in $k$.
\end{claim}
\begin{claimproof}
By \cref{measure-computable from basis}, 
it suffices to prove that $\mu_k$ is computable from $y$ on a Lebesgue-almost-decidable basis which is computably closed under finite unions\, uniformly in $k$ and for which the union map is computable.
Let $ i,j,m \in \Nats$ with $i < j \le 2^m$,
let
\[
J_{i,j,m} \defas \Bigl ( \frac {i} {2^m}, \frac {j } {2^m} \Bigr )
\]
be an open interval with dyadic end points, 
and let $\mathcal J$ be the collection 
of all finite disjoint unions of such sets, indexed in the obvious way.  
Then $\mathcal J$ is a basis of continuity sets
 with respect to any measure absolutely continuous with respect to Lebesgue measure.  
It is straightforward to show that $\mathcal J$ is moreover a Lebesgue-almost-decidable basis.
Uniformly in $m \in \Nats$, the functions $f_m,f_\infty$ are computable and so their definite integrals over $\mathcal J$ are uniformly computable.
It follows that $\mu_k \bigl(J_{i,j,m}\bigr)$ is computable, uniformly in $i,j,k,m$, for every $m$ in the set
$\{m \in \Nats : m \ge \iota(k) \}$, which is itself a computable element of $\Opens(\Nats)$ from $y$, uniformly in $k$.
It suffices to show that
$\mu_k \bigl( J_{i,j,m} \bigr)$ is computable, uniformly in $i,j,k,m$ for 
every $m$ in the set
$\{m \in \Nats : m < \iota(k) \}$, which is also a computable element of $\Opens(\Nats)$ from $y$, uniformly in $k$.
To see this, note that, for $ m < \iota(k)$, 
\[
	\int_{J_{i,j,m}} f_{\iota(k)}(z) \,\dee z = 2^{-m} (j-i),
\] 
completing the proof of the claim.
\end{claimproof}	

We now define $\mu$ as the measure on $\Nats \times [0,1]$ such that $\mu( \{n\} \times \cdot ) $ is the measure having density $g_n$ with respect to Lebesgue measure, where
\[\label{gdefn}
g_{2m} \defas 2^{-m-2} f_{\iota(m)}
\quad\text{ and }\quad
g_{2m+1} \defas 2^{-m-2}(2- f_{\iota(m)}). 
\]
By construction $\sum_{n \in \Nats} g_n(x) = 1$ for every $x \in [0,1]$, and so $\mu$ is a probability measure with $\supp(\PF {\pi_2} \mu) = [0,1]$.
Note that these measures are easily seen to be uniformly computable in $y$ from \cref{indvcomp}, 
and so, again by \cref{measure-computable from basis}, 
it follows that $\mu$ is computable.

It follows from an application of Bayes' rule that the disintegration of $\mu$ along its second coordinate
agrees almost everywhere with the map $\kappa : [0,1] \to \ProbMeasures(\Nats)$ given by
\[\label{g is comp}
\kappa(z)\{n\} = g_n(z).
\]
Clearly, $\kappa$ is continuous, and because $\PF {\pi_2} \mu$ has full support, $\kappa$ is the unique continuous disintegration.

In summary, we have described a computable map $\ContFuncs(\Nats,\Sier) \to \ACD_{\Nats, [0,1]}$ sending a function $x$ to a measure $\mu$ admitting a unique continuous disintegration.
Take $K$ to be 
a
realizer for this map.
We now show how to compute $x$ as an element in $\ContFuncs(\Nats,\ZO)$ from $\kappa$, which gives a realizer $H$ such that
$ H \circ G \circ K \vdash \EC$ for any $G \vdash \ACD_{\Nats,[0,1]}$.

Note that, by \cref{g is comp}, the function $g_k$ is computable from $\kappa$, uniformly in $k$.  It follows that the functions $f_{\iota(k)}$ are computable from $\kappa$, uniformly in $k$.  Observe that
\[
	x(k) = f_{\iota(k)}(0) - 1 = 
\begin{cases}
	1, & \textrm{if~}\, \iota(k) < \infty; \\
	0, & \textrm{if~}\, \iota(k) = \infty.
\end{cases}
\]
It follows that $x(k)$ is computable from $\kappa$, uniformly in $k$, as desired.
\end{proof}

Recall that $\DD$ denotes the operator $\DD_{[0,1],[0,1]}$.
We thank an anonymous referee for suggestions that simplified the following proof.

\begin{corollary}\label{ECDD}
$\DD_{\Nats, [0,1]} \lesW \DD$ and so $\EC \lesW \DD$.
\end{corollary}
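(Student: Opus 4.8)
The plan is to establish $\DD_{\Nats, [0,1]} \lesW \DD$ by embedding the discrete conditioning variable space $\Nats$ into $[0,1]$ in a computable, measure-preserving-friendly way, so that a disintegration over $\Nats \times [0,1]$ can be recovered from a disintegration over $[0,1] \times [0,1]$. Once this first reduction is in hand, the second conclusion $\EC \lesW \DD$ follows immediately: by \cref{ECleD} we have $\EC \lesW \DD_{\Nats,[0,1]}$, and strong Weihrauch reducibility is transitive (as recorded in \cref{Weihrauch-deftwo}), so $\EC \lesW \DD_{\Nats,[0,1]} \lesW \DD$ gives $\EC \lesW \DD$.

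For the main reduction, first I would fix a computable embedding $e \colon \Nats \to [0,1]$ of the discrete space into the unit interval; a natural choice is $e(n) = \sum_{i \le n} 2^{-i-1}$ or any computable injection onto a closed discrete subset of $[0,1]$ whose points are isolated within the image, so that the induced map $\ProbMeasures(\Nats) \to \ProbMeasures([0,1])$ carries mass onto isolated point masses. Given an input measure $\mu \in \ACD_{\Nats,[0,1]}$ on $\Nats \times [0,1]$, the map $K$ should compute the pushforward measure $\mu' \defas \PF{(e \times \id)}{\mu}$ on $[0,1] \times [0,1]$; because $e \times \id$ is a computable map between computable metric spaces, the pushforward is computable on the level of valuations by \cref{Lemma: id map from prob to val is continuous} and the computability of pushforward operations. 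I would then check that $\mu' \in \ACD_{[0,1],[0,1]}$: its pushforward along $\pi_2$ coincides with that of $\mu$ and hence has full support on $[0,1]$, and since every point of $[0,1]$ is a Tjur point for $\mu$ along $\pi_2$, the corresponding points remain Tjur for $\mu'$ (the fibers over the second coordinate are merely relabeled along the first coordinate by the homeomorphism $e$ onto its image). Thus $\mu'$ admits a unique continuous disintegration by \cref{Tjur = continuous disintegration on measure one set}.

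Applying $\DD = \DD_{[0,1],[0,1]}$ to $\mu'$ yields a continuous map $\kappa' \in \ContFuncs([0,1], \ProbMeasures([0,1]))$, and the output map $H$ must recover the disintegration $\kappa \in \ContFuncs([0,1], \ProbMeasures(\Nats))$ of the original $\mu$. The key point is that for each fixed conditioning value $z \in [0,1]$, the fiber measure $\kappa'(z)$ is supported on the image $e(\Nats) \subseteq [0,1]$, so pulling back along $e$ recovers $\kappa(z) \in \ProbMeasures(\Nats)$; concretely, $\kappa(z)\{n\} = \kappa'(z)(\{e(n)\})$, and since the points $e(n)$ are isolated in $e(\Nats)$ they can be separated by $\kappa'(z)$-continuity balls, making $\kappa(z)$ computable from $\kappa'(z)$ by \cref{compmeasureofad}. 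This pullback is a computable operation $\ContFuncs([0,1],\ProbMeasures([0,1])) \to \ContFuncs([0,1],\ProbMeasures(\Nats))$, giving the realizer $H$ with $H \circ G \circ K \vdash \DD_{\Nats,[0,1]}$ for any $G \vdash \DD$.

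I expect the main obstacle to be verifying that the pushed-forward measure $\mu'$ genuinely lands in $\ACD_{[0,1],[0,1]}$ and that the recovery of the discrete fiber measures is computable \emph{uniformly and continuously} in the conditioning variable $z$. The subtlety is that $e(\Nats)$ is not closed in $[0,1]$ (its closure adds a limit point), so I must ensure the embedding places the accumulation point outside the support of the relevant marginals, or otherwise argue that the limit point carries no mass under any $\kappa'(z)$; this is precisely where I would use that the original $\mu$ assigns, at each $z$, total mass one spread over the discrete fibers, so that $\kappa'(z)$ is a genuine probability measure concentrated on the isolated points $e(n)$ with no leakage to the accumulation point. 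Handling this measure-concentration bookkeeping carefully, together with the uniform computability of separating the isolated atoms via continuity sets, is the crux of the argument.
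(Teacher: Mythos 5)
Your proposal is correct and follows essentially the same route as the paper: push $\mu$ forward along $e \times \id$ for a computable injection $e \colon \Nats \to [0,1]$, apply $\DD$, and recover $\kappa(z)\{n\}$ by evaluating the resulting fiber measures on small open sets isolating $e(n)$ (the paper uses fixed open sets $\phi_n$ with $e^{-1}[\phi_n]=\{n\}$ and needs only lower semicomputability of their measures, which sidesteps your continuity-set and accumulation-point bookkeeping). One terminological slip: $\Nats$ is the fiber space $S$, not the conditioning space $T$, but your construction treats it correctly as the first coordinate, so the argument is unaffected.
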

\begin{proof}
Let $\alpha \colon \Nats \to [0,1]$ be some canonical computable injective map. In particular, assume there exists a computable map $\phi \colon \Nats \to \Opens([0,1])$ such that $\alpha^{-1}[ \phi_n ] = \{n\}$.
Given $\mu \in \ProbMeasures(\Nats \times [0,1])$,
define $\nu \in \ProbMeasures([0,1]^2)$ to be the pushforward of $\mu$ via $\alpha \times \id$.
Then
$
\DD_{\Nats, [0,1]}(\mu)(t)\{n\}
=
\DD(\nu)(t)(\phi_n),
$ 
the latter being an element of $\LReals$, computable from 
$\DD_{\Nats, [0,1]}(\mu)$ and $t$,
uniformly in $n$.  
This demonstrates $\DD_{\Nats, [0,1]} \lesW \DD$.
Then $\EC \lesW \DD$ follows from \cref{ECleD}.
\end{proof}

\subsection{Upper Bound}
\label{upper-uniform}
We now make use of results about Tjur points from \cref{Tjurpoints}.
Recall that for a computable metric space $S$ and $\mu \in \ProbMeasures(S)$, 
we have defined $\AD {\mu}(S)$ to be the collection of 
$\mu$-continuity sets in $\Opens(S)$.

\begin{lemma}\label{conditioningoncontset}
For any computable metric space $S$,
the function 
taking $\mu \in \ProbMeasures(S)$ and $H \in \AD {\mu}(S)$, satisfying $\mu(H) > 0$, to
the probability measure 
$\mu^H$
is computable.
\end{lemma}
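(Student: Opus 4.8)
The plan is to reduce the computation of $\mu^H$ to the lower semi-computation of measures of open sets, via the equivalence between probability measures and valuations. By \cref{Lemma: id map from prob to val is continuous}, both the map $\ProbMeasures(S) \to \Valuations(S)$ and its inverse are computable, so to produce $\mu^H$ as a point of $\ProbMeasures(S)$ it suffices to produce its associated valuation, i.e., to lower semi-compute $\mu^H(U) \in \LReals$ uniformly in a name for an arbitrary open set $U \in \Opens(S)$. Since $\mu^H$ is genuinely a Borel probability measure (indeed $\mu^H(S) = \mu(H)/\mu(H) = 1$), its restriction to the open sets is such a valuation, so this reduction is legitimate.

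For the numerator, first observe that $H$, viewed as an element of $\Opens(S)$, is computable from its name in $\AD{\mu}(S)$ by \cref{adcompop}, part (6). Hence $U \cap H$ is computable as an element of $\Opens(S)$, uniformly in $U$, by \cref{arno-Prop-6}, part (1). The measure $\mu(U \cap H) \in \LReals$ is then lower semi-computable from $\mu$ and $U \cap H$, again by \cref{Lemma: id map from prob to val is continuous}. For the denominator, since $H$ is a $\mu$-continuity set, \cref{compmeasureofad} gives $\mu(H) \in \Reals$ as a genuine computable real; by hypothesis $\mu(H) > 0$, so $\mu(H) \in \Reals^+$ and its reciprocal $1/\mu(H) \in \Reals^+$ is computable, since division by a nonzero real is computable.

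Combining these, $\mu^H(U) = \mu(U \cap H) \cdot \bigl(1/\mu(H)\bigr)$ is obtained by applying the computable operation $\times_+ \colon \LReals \times \Reals^+ \to \LReals$, and so is lower semi-computable uniformly in $U$. This produces the valuation associated to $\mu^H$, and applying the computable inverse from \cref{Lemma: id map from prob to val is continuous} yields $\mu^H$ as a computable point of $\ProbMeasures(S)$, uniformly in $\mu$ and $H$. There is no serious obstacle beyond correctly separating the two numerical regimes: the numerator $\mu(U \cap H)$ need only be \emph{lower} semi-computable, which is exactly what the valuation representation requires, whereas the denominator $\mu(H)$ must be a genuine computable real in order to be inverted — and this is precisely what the continuity-set hypothesis $H \in \AD{\mu}(S)$ secures via \cref{compmeasureofad}.
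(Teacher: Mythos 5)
Your proof is correct and follows essentially the same route as the paper's: compute $\mu(U\cap H)\in\LReals$ for open $U$, use \cref{compmeasureofad} to get $\mu(H)$ as a genuine positive real, and divide to lower semi-compute $\mu^H(U)$ uniformly, which determines $\mu^H$ as a point of $\ProbMeasures(S)$. You merely make explicit the appeal to the valuation representation (\cref{Lemma: id map from prob to val is continuous}) that the paper leaves implicit.
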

\begin{proof}
Let $A \in \Opens (S)$. Because $H \in \Opens(S)$, we have $A \cap H \in \Opens(S)$ and so $\mu(A \cap H) \in \LReals$.
But, by \cref{compmeasureofad}, $\mu(H) \in \Reals$ is computable from $H$, 
and so $\mu^H(A) = \frac { \mu( A \cap H) }{ \mu(H) }$,
viewed as an element of $\LReals$, is 
is computable from $H$ and $A$, because division of an element of $\LReals$ by an element of $\Reals \setminus \{0\}$ is computable. 
\end{proof}

Recall that $\pi_2 : S \times T \to T$ denotes the projection map.

\begin{corollary}\label{compcond}
Let $S$ and $T$ be computable metric spaces,
let $\mu \in \ProbMeasures(S \times T)$,
let $\mu_T$ be its projection onto $T$,
and let $U \in \AD {\mu_T}(T)$ be a $\mu_T$-continuity set with $\mu_T(U)>0$.
Then $\mu_{\pi_2}^U = \mu^{S \times U}$
is computable from $\mu$ and $U$.
\end{corollary}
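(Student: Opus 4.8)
The plan is to recognize this as a direct instance of \cref{conditioningoncontset}, applied to the ambient space $S \times T$, the measure $\mu$, and the conditioning set $S \times U$. First I would record the identity $\pi_2^{-1}(U) = S \times U$, so that by the definition of $\mu^B_{g} = \mu^{g^{-1}(B)}$ we have $\mu_{\pi_2}^U = \mu^{\pi_2^{-1}(U)} = \mu^{S \times U}$; thus it suffices to produce $\mu^{S \times U}$ computably from $\mu$ and $U$. To invoke \cref{conditioningoncontset} I must check three things: that $S \times U$ is a $\mu$-continuity set, that it has positive $\mu$-measure, and that a name for it as an element of $\AD{\mu}(S \times T)$ is computable from $U$ and $\mu$.

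The continuity-set and positivity conditions are immediate. Since $\mu_T$ is the pushforward $\PF{\pi_2}\mu$, we have $\mu(S \times B) = \mu_T(B)$ for every Borel $B \subseteq T$. Moreover $\partial(S \times U) = S \times \partial U$ (equivalently, since $\pi_2$ is continuous, $\partial(\pi_2^{-1}(U)) \subseteq \pi_2^{-1}(\partial U)$), whence $\mu(\partial(S \times U)) = \mu_T(\partial U) = 0$ because $U$ is a $\mu_T$-continuity set; and $\mu(S \times U) = \mu_T(U) > 0$ by hypothesis.

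The substantive point, and the only mild obstacle, is to build a name for $S \times U \in \AD{\mu}(S \times T)$ from the given name for $U \in \AD{\mu_T}(T)$. Recall that such a name is a pair consisting of a name for the open set together with a name for an open complement witness of full complementary measure. I would take the given name $(u,v)$ for $U$, where $v$ names an open $V \subseteq T \setminus U$ with $\mu_T(V) = \mu_T(T \setminus U)$, and produce the pair whose open-set component is $S \times U$ and whose witness component is $S \times V$; both products of open sets are computable by part (6) of \cref{arno-Prop-6} (with $S$ a computable element of $\Opens(S)$). Then $S \times V \subseteq (S \times T) \setminus (S \times U)$, and using $\mu(S \times B) = \mu_T(B)$ once more, $\mu(S \times V) = \mu_T(V) = \mu_T(T \setminus U) = \mu\bigl((S \times T) \setminus (S \times U)\bigr)$, so this is a valid name. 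Feeding $\mu$ together with this name into \cref{conditioningoncontset} then yields $\mu^{S \times U}$ computably, completing the reduction. I note that this construction is simpler than the general product-of-continuity-sets computation in part (3) of \cref{adcompop}: here one factor is the whole space, so the complement of $S \times U$ is simply $S \times (T \setminus U)$ and no further witnesses need to be combined.
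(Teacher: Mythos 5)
Your proposal is correct and follows essentially the same route as the paper's proof: verify that $S \times U = \pi_2^{-1}(U)$ has positive $\mu$-measure and $\mu$-null boundary, exhibit it as an element of $\AD{\mu}(S\times T)$, and invoke \cref{conditioningoncontset}. The only difference is that you spell out the construction of the name $(S\times U, S\times V)$ explicitly (correctly noting that the product-measure hypothesis of part~(3) of \cref{adcompop} is not needed when one factor is the whole space), a detail the paper leaves implicit.
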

\begin{proof}
We have $\mu(S \times U) = \mu_T(U) > 0$.
Moreover, $\mu(\partial (S \times U)) = \mu_T(\partial U) = 0$, 
and so 
$S \times U  \in \AD {\mu}(S \times T)$.
It follows that $\mu^{S \times U}$ is computable by \cref{conditioningoncontset}.
\end{proof}

\begin{proposition}
\label{DleLim}
For any two computable metric spaces $S$ and $T$, we have
\[
  \DD_{S,T} \lesW \EC.
\]
\end{proposition}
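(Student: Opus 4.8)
The plan is first to reduce the strong reduction to an ordinary one. Since $\EC$ is a cylinder (\cref{EC-is-a-cylinder}), \cref{cylinder-W-sW-lemma} shows it suffices to prove the ordinary Weihrauch reduction $\DD_{S,T} \leW \EC$; and since $\EC \eqsW \Lim$ (\cref{ECeqWLim}), I may freely use $\Lim$ in place of $\EC$. The advantage is twofold: an ordinary reduction $F = H\circ\langle\id, G\circ K\rangle$ lets the post-processing $H$ see the original name of $\mu$ together with the single oracle answer, and a single (parallelized) call to $\EC$/$\Lim$ can resolve countably many limit queries at once.

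Next I would assemble the computable data from a name for $\mu\in\ACD_{S,T}$. The pushforward $\mu_T := \PF{\pi_2}\mu$ is computable, and by \cref{continuitysubbasis} I can compute a $\mu_T$-continuous basis $\langle U_i\rangle_{i\in\Nats}$ of $T$, as well as a basis of $S$ consisting of continuity sets. Because $\mu\in\ACD_{S,T}$ forces $\supp(\mu_T)=T$, every nonempty $U_i$ satisfies $\mu_T(U_i)>0$. For each such $U_i$ and each basic open continuity set $O$ of $S$, the conditional measure $\mu^{S\times U_i}$ is computable by \cref{compcond}, and hence so is the real number $a_i(O) := \mu^{S\times U_i}(O\times T)$, uniformly in $i$ and $O$ (using \cref{adcompop,compmeasureofad}).

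The key structural fact is that $a_i(O)$ is exactly the $\mu_T$-average of $x\mapsto \kappa(x)(O)$ over $U_i$, where $\kappa=\DD_{S,T}(\mu)$ is the continuous disintegration: by the disintegration identity, $a_i(O)=\mu_T(U_i)^{-1}\int_{U_i}\kappa(x)(O)\,\PF{\pi_2}\mu(\dee x)$. Since $\mu\in\ACD_{S,T}$, every point of $T$ is a Tjur point (\cref{Tjur = continuous disintegration on measure one set}), so by \cref{maintjurlemma} and the Portmanteau characterization, $\kappa(x)(O)=\lim_{U_i\downarrow x} a_i(O)$ along any basic neighborhoods shrinking to $x$. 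Continuity of $\kappa$ together with the averaging identity then yields, for each basic $U_i$ and continuity set $O$,
\[
\inf_{x\in U_i}\kappa(x)(O) = \inf\bigl\{\, a_{i'}(O) \st U_{i'}\subseteq U_i,\ \mu_T(U_{i'})>0 \,\bigr\},
\]
an infimum of a countable family of computable reals. A single parallelized call to $\Lim$ (equivalently $\EC$) computes all these infima — as limits of their running minima — as honest real numbers, uniformly in $i$ and $O$.

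Finally I would reconstruct the name of $\kappa$ by purely computable post-processing. Knowing each $m_{i,O}:=\inf_{x\in U_i}\kappa(x)(O)$ as a real, I can semidecide $m_{i,O}>q$ for rational $q$ and thereby enumerate a name for each value map $x\mapsto\kappa(x)(O)\in\ContFuncs(T,\LReals)$; assembling these over a continuity-set basis of $S$ (via the valuation representation and admissibility of $\ProbMeasures(S)$) produces a name for $\kappa\in\ContFuncs(T,\ProbMeasures(S))$, as required. The main obstacle is precisely this repackaging: the disintegration is recovered as an inherently discontinuous pointwise/net limit, yet the reduction must emit one name for the entire continuous function before any evaluation point is presented, using a single oracle call. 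The crux is therefore to organize the uncountably many pointwise limits into the countable infimum-of-averages above, so that one parallelized $\Lim$ query suffices, and to handle the attendant bookkeeping — the averaging identity and the choice of continuity sets $O$ making the Portmanteau limits valid — so that the surrounding computation is genuinely effective.
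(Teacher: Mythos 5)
Your overall skeleton (reduce to an ordinary Weihrauch reduction via \cref{EC-is-a-cylinder} and \cref{cylinder-W-sW-lemma}, compute a $\mu_T$-continuous basis via \cref{continuitysubbasis}, form the conditional measures on basic sets via \cref{compcond}, and invoke the Tjur property) matches the paper, but your central mechanism does not work as stated. The problem is the map $x \mapsto \kappa(x)(O)$ for a fixed open $O \subseteq S$: weak continuity of $\kappa$ only makes this function \emph{lower} semicontinuous, and it is continuous at $x$ only when $O$ is a $\kappa(x)$-continuity set. Choosing $O$ to be a continuity set for the marginal $\iota_1(\mu)$ only guarantees $\kappa(x)(\partial O)=0$ for $\mu_T$-almost every $x$, not for every $x$, and in general no countable basis of $S$ consists of continuity sets for \emph{all} the fiber measures $\kappa(x)$ simultaneously (the exceptional radii can cover everything as $x$ ranges over $T$). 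Consequently your identity $\inf_{x\in U_i}\kappa(x)(O) = \inf\{a_{i'}(O) \st U_{i'}\subseteq U_i\}$ can fail: if $\kappa(x_0)$ places an atom on $\partial O$ that nearby fibers place just inside $O$, every average $a_{i'}(O)$ stays bounded away from $\kappa(x_0)(O)$ even though $\kappa$ is weakly continuous and $O$ is a marginal continuity set. The same defect breaks the final reconstruction $\kappa(x)(O)=\sup\{m_{i,O}\st x\in U_i\}$, which again needs continuity (not just lower semicontinuity) of $x\mapsto\kappa(x)(O)$; the value your procedure outputs is $\liminf_{U\downarrow x} a_U(O)$, which by the Portmanteau lemma can strictly exceed $\kappa(x)(O)$. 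Since the output must be a correct name for $\kappa(x)$ at \emph{every} $x\in T$, being wrong on a $\mu_T$-null set is not acceptable, and the error is in the wrong direction to be absorbed by the valuation (lower-semicomputation) representation of $\ProbMeasures(S)$.

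The paper's proof avoids this entirely by never decomposing $\ProbMeasures(S)$ along a basis of $S$: it works with the Prokhorov metric $\pmetric$ on $\ProbMeasures(S)$ directly. From $\mu$ one enumerates the set of triples $(m,n,k)$ with $\pmetric(\mu^{B(m)}_{\pi_2},\mu^{B(n)}_{\pi_2})>2^{-k}$; a single application of $\EC$ makes the predicate ``$B(n)$ is $2^{-k}$-stable'' decidable; and then, for each $t$, the Tjur property (via \cref{Tjur = continuous disintegration on measure one set} and \cref{maintjurlemma}) guarantees one can find stable basic sets containing $t$, yielding a rapidly converging Cauchy sequence in $\ProbMeasures(S)$ whose limit $\mu^t$ is computable by the definition of a computable metric space. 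If you want to salvage your argument, you would need to replace the per-set quantities $a_i(O)$ by a metric-level stability criterion of this kind; as written, the step from the infima $m_{i,O}$ back to a name for $\kappa$ is a genuine gap.
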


\begin{proof}
	By Lemma~\ref{EC-is-a-cylinder}, the operator $\EC$ is a cylinder. Hence by
	Lemma~\ref{cylinder-W-sW-lemma},
	it suffices to show
		that
	$\DD_{S,T} \leW \EC$.
	Let $\mu \in \ACD_{S, T}$,
let $\mu_T$ be its projection onto $T$,
and let $\pmetric$ denote the (computable) metric on $\ProbMeasures(S)$.

By Lemma~\ref{continuitysubbasis}, we can compute a $\mu$-continuous basis $B$ from $\mu$.
By \cref{compcond},
$\mu^{B(n)}_{\pi_2} \in \ProbMeasures(S)$ is computable from $\mu$, uniformly in $n$, and so, the set $F$ of those triples $(m,n,k)$ such that 
$\pmetric(\mu^{B(m)}_{\pi_2},\mu^{B(n)}_{\pi_2}) > 2^{-k}$ is also computably enumerable from $\mu$. 
Define $\xi \in \ContFuncs(\Nats \times \Nats,\Sier)$ by
\[
\xi(n,k) = 
\begin{cases}
1, &\text{if $\exists m \in \Nats$, $m \ge n$ and $(m,n,k) \in F$, }\\
0, &\text{otherwise.}
\end{cases}
\]
As an element of $\ContFuncs(\Nats \times \Nats,\Sier)$,
note that $\xi$ is computable from $I$ and $F$.

Let $K$ be the map taking $\mu$ to $\xi$, and 
define $\hat \xi \in \ContFuncs(\Nats \times \Nats, \ZO)$ by $\hat \xi \defas (\EC \circ K)(\mu)$.
Because $\mu \in \ACD_{S, T}$, by Proposition~\ref{Tjur = continuous disintegration on measure one set} every point of $T$ is a Tjur point for $\mu$ along $\pi_2$. 
Let $t \in T$.
As $t$ is a Tjur point, for every $k \in \Nats$, there is some $n_k \in \Nats$ such that $\hat\xi(n_k,k) = 0$ and $t \in B(n_k)$. Further, for such an $n_k$ we have $\pmetric(\mu^{B(n_k)}_{\pi_2}, \mu^t) \leq 2 \cdot 2^{-k}$. 

Define a sequence $\nu \in \ContFuncs(\Nats,\ProbMeasures(S))$ by $\nu(k) = \mu^{B(n_k)}_{\pi_2}$.
By construction of $\hat\xi$, the sequence $\nu$ is a rapidly converging Cauchy sequence converging to $\mu^t$ and so $\mu^t$ is computable from $\nu$ by 
the definition of a computable metric space.

In summary, there is a computable map taking the tuple $((\mu,\hat \xi),t)$ to $\mu^t$.
Let $H$ be the corresponding ``curried'' map 
taking the pair $(\mu, \hat \xi)$ to the map $t \mapsto \mu^t$, which is also computable.
Then $H$
satisfies $\DD_{S,T} = H \circ \langle \id, \EC \circ K \rangle$.
\end{proof}

\subsection{Equivalence}

The following result is an immediate consequence of 
\cref{ECleD,DleLim,ECDD}, as well as the fact that 
$\Lim \eqsW \EC$ by Lemma~\ref{ECeqWLim}.

\begin{theorem}\label{continuous degree equivalence}
$\DD \eqsW \DD_{\Nats, [0,1]} \eqsW \Lim$. \qed
\end{theorem}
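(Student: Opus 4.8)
The plan is to show that each of $\DD$ and $\DD_{\Nats,[0,1]}$ is strongly Weihrauch equivalent to $\EC$, and then to invoke \cref{ECeqWLim}, which gives $\EC \eqsW \Lim$, to transfer the equivalence to $\Lim$. Since $\eqsW$ is the equivalence relation induced by the preorder $\lesW$, it suffices to exhibit, for each of the two operators, a reduction in each direction against $\EC$, and then to chain these equivalences through $\EC$.

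First I would handle $\DD_{\Nats,[0,1]}$. The upper bound $\DD_{\Nats,[0,1]} \lesW \EC$ is the special case $S = \Nats$, $T = [0,1]$ of \cref{DleLim}, and the matching lower bound $\EC \lesW \DD_{\Nats,[0,1]}$ is exactly \cref{ECleD}. Combining these two reductions gives $\DD_{\Nats,[0,1]} \eqsW \EC$.

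Next I would handle $\DD = \DD_{[0,1],[0,1]}$. The upper bound $\DD \lesW \EC$ is the special case $S = T = [0,1]$ of \cref{DleLim}. For the lower bound, \cref{ECDD} supplies $\DD_{\Nats,[0,1]} \lesW \DD$, and composing this with \cref{ECleD} yields $\EC \lesW \DD$ (this composition is precisely the ``and so $\EC \lesW \DD$'' clause already recorded in \cref{ECDD}). Together these give $\DD \eqsW \EC$.

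Finally, transitivity of $\lesW$, and hence of $\eqsW$, lets me chain $\DD \eqsW \EC$, $\DD_{\Nats,[0,1]} \eqsW \EC$, and $\EC \eqsW \Lim$ into the single statement $\DD \eqsW \DD_{\Nats,[0,1]} \eqsW \Lim$. There is no genuine obstacle remaining at this stage: all of the substantive work has already been carried out in the cited results, namely the measure-theoretic lower-bound construction built on Bayes' rule in \cref{ECleD}, the Tjur-point-based upper bound in \cref{DleLim}, and the transport from $\Nats$ to $[0,1]$ along a computable injection in \cref{ECDD}. The only point requiring care is bookkeeping: checking that each cited reduction is instantiated at the correct pair of spaces $(S,T)$ and that the directions of the reductions align so that the two sandwich arguments actually close.
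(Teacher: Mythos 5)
Your proposal is correct and follows essentially the same route as the paper: the theorem is stated there as an immediate consequence of \cref{ECleD,DleLim,ECDD} together with $\EC \eqsW \Lim$ from \cref{ECeqWLim}, which is precisely the sandwich-through-$\EC$ argument you describe. Your instantiations of \cref{DleLim} at $(\Nats,[0,1])$ and $([0,1],[0,1])$ are both legitimate since each is a pair of computable metric spaces, so nothing is missing.
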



\section{The disintegration of specific distributions}
\label{specific}

In the previous section, we considered the disintegration operator on the 
space of measures for which the disintegration operator is continuous (and hence computable from a single real). In this section, we consider the possible Weihrauch degrees
of disintegrations which may not be continuous.
We will show that 
there exist individual disintegrations that are themselves
strongly Weihrauch equivalent to $\Lim$.

\subsection{Definitions}

In order to study disintegration in the context of
probability measures that do not necessarily admit continuous disintegrations,
we work with the notions developed by Fraser and Naderi based on the Vitali covering property, as described in \cref{weakerFN}.

\begin{definition}
Let $T$ be a computable metric space and let $\mu$ be a probability measure on $T$.
A class $V \subseteq \Borel {T}$ of $\mu$-continuity sets has the 
\defn{strong Vitali covering property} with respect to $\mu$ when (1) it has the Vitali covering property with respect to $\mu$
and (2)
there exists a multi-valued map $U : T \multito \ContFuncs(\Nats,\AD {\mu}(T))$, computable relative to $\mu$, 
such that, for every $x \in T$, there is a sequence
$\< E_i \>_{i \in \Nats}$ converging regularly to $x$ with respect to $\mu$ such that
\[
	U(x)(n) \subseteq E_n
	  \text{ and }
	\mu(U(x)(n)) = \mu(E_n),
\]
for every $n \in \Nats$.
\end{definition}

The following is an important example of a collection of sets with the computable Vitali covering property.

\begin{lemma}
\label{computable Vitali for ultrametric}
If $T$ is a computable ultrametric space then the collection of open balls has the computable Vitali covering property with respect to any measure on $T$.
\end{lemma}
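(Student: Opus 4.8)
The plan is to check the two clauses in the definition of the strong Vitali covering property for the class $V$ of open balls of $T$. Clause~(1)---the Vitali covering property itself---is exactly \cref{ultrametric-has-Fraser}; moreover every open ball is a $\mu$-continuity set, since in an ultrametric space every ball is clopen and hence has empty boundary. So the real work is in clause~(2): producing the multi-valued map $U\colon T \multito \ContFuncs(\Nats,\AD{\mu}(T))$ computable relative to $\mu$. The construction rests on two elementary features of ultrametric spaces. First, every ball is clopen, so (for any radius) it is a $\mu$-continuity set and a computable element of $\AD{\mu}(T)$: the complement $\{y : d(x,y)\ge r\}$ is itself open and, having full complement measure, supplies the witnessing open set in the representation of $\AD{\mu}(T)$. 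Second, every point of a ball is a center, i.e.\ $d(x,s)<r$ implies $\Balls(s,r)=\Balls(x,r)$; this lets us name a ball \emph{around $x$} using a center taken from the dense sequence of $T$.

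For the map $U$, first use \cref{continuitysubbasis} (which in turn rests on \cref{lem-boss}) to compute from $\mu$ a dense, computable family of radii $\langle\epsilon_k\rangle_{k\in\Nats}$ for which each $\Balls(s_j,\epsilon_k)$ is a computable element of $\AD{\mu}(T)$, uniformly in $j,k$. Given a name for a point $x$ and an index $n$, dovetail a search over $j,k$ for a pair with $\epsilon_k<2^{-n}$ and $d(x,s_j)<\epsilon_k$; such a pair exists by density, and the test $d(x,s_j)<\epsilon_k$ is semidecidable from the names of $x$ and $s_j$ because $d$ is computable. Set $U(x)(n)\defas\Balls(s_j,\epsilon_k)$. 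By the centering property this equals $\Balls(x,\epsilon_k)$, a clopen ball containing $x$ of radius below $2^{-n}$, presented as an element of $\AD{\mu}(T)$. This yields a map computable relative to $\mu$, and by construction the sets $U(x)(n)$ shrink to $x$ as $n\to\infty$.

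It remains, for each $x$, to exhibit a sequence $\langle E_n\rangle$ converging regularly to $x$ with $U(x)(n)\subseteq E_n$ and $\mu(U(x)(n))=\mu(E_n)$. The natural choice is $E_n\defas U(x)(n)=\Balls(x,\epsilon_{k_n})$, for which both requirements hold with equality, so that everything reduces to the regular convergence of this sequence of clopen balls. I expect the one genuine obstacle to lie here, in reconciling the open balls of $V$ with the closed balls demanded by \cref{converging regularly}. If ``closed ball'' is read as ``closed set that is a ball,'' then each clopen $E_n$ is its own witnessing ball $B_n$ and regular convergence holds with constant $\alpha=1$, finishing the proof. If instead ``closed ball'' is read strictly as a metric ball $\{y:d(x,y)\le r\}$, then $\Balls(x,\epsilon_{k_n})$ and $\{y:d(x,y)\le\epsilon_{k_n}\}$ may differ by the sphere $\{y:d(x,y)=\epsilon_{k_n}\}$, whose mass need not be controlled, so the ratio $\mu(E_n)/\mu(B_n)$ can degenerate.

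Under the strict reading the remedy---and the technical heart of the argument---is to choose the radii so that these spheres are $\mu$-null, in which case one takes $U(x)(n)=\Balls(x,r_n)$ while letting the merely existential (hence not necessarily computable) set $E_n\defas\{y:d(x,y)\le r_n\}$ be a genuine closed ball serving as its own witness, with $\mu(U(x)(n))=\mu(E_n)$ because the sphere is null. That null-sphere radii are plentiful follows from the fact that, for fixed $x$, the image of $\mu$ under $y\mapsto d(x,y)$ is a finite measure on $[0,\infty)$ and so has at most countably many atoms; equivalently $\int_0^{2^{-n}}\mu\{y:d(x,y)=r\}\,\dee r=0$, so null-sphere radii form a Lebesgue-co-null set in every interval $(0,2^{-n})$. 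Computing such a radius relative to $\mu$ is the delicate point: using \cref{compmeasureofad} together with the fact that both $\{y:d(x,y)<r\}$ and $\{y:d(x,y)>r\}$ are open, one can lower-semicompute $\mu(\Balls(x,r))$ and $\mu\{y:d(x,y)>r\}$, hence upper-semicompute the sphere mass $1-\mu(\Balls(x,r))-\mu\{y:d(x,y)>r\}$; a limiting construction then drives this upper bound to zero and extracts a radius $r_n<2^{-n}$ at which the sphere mass vanishes exactly. Carrying out this selection uniformly and computably in $x$ is the part of the proof that requires the most care.
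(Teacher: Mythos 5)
Your overall route is the same as the paper's: clause~(1) is \cref{ultrametric-has-Fraser} plus the observation that balls are clopen, and clause~(2) is handled by computably producing, from a name for $x$, a sequence of basic open balls around $x$ of vanishing radius. The paper's own proof is only two sentences and simply asserts that this sequence converges regularly; you have correctly isolated the one point that assertion glosses over, namely that \cref{converging regularly} demands witnessing \emph{closed} metric balls, and in an ultrametric space the sphere $\{y : d(x,y)=r\}$ is disjoint from the clopen open ball (which has empty boundary, so neither \cref{lem-boss} nor the continuity-set property controls it) and may carry most of the mass of $\overline{B}(x,r)$. Your diagnosis and remedy are sound: for fixed $x$ the positive-mass spheres occur at only countably many radii, so good radii are dense in every $(0,2^{-n})$.

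The only incomplete step is the one you flag yourself: the uniform computable selection of such radii. Two remarks. First, you do not need the sphere mass to vanish exactly: it suffices to find $r_n<2^{-n}$ with $\mu\{y: d(x,y)=r_n\}\le\tfrac12\,\mu(\overline{B}(x,r_n))$, which already gives regular convergence with $\alpha=\tfrac12$ and keeps $E_n=U(x)(n)=\Balls(x,r_n)\in V$. Second, even the exact version can be completed by a standard nested search: the condition $\mu\{y: a\le d(x,y)\le b\}<2^{-k}$ is semidecidable from $\mu$, $x$, and rationals $a<b$ (the complement of that annulus is the union of the two open sets $\{d<a\}$ and $\{d>b\}$, whose measures are lower semicomputable), and by density of null-sphere radii one can find nested rational intervals $[a_{k+1},b_{k+1}]\subseteq(a_k,b_k)\subseteq(0,2^{-n})$ with $\mu\{y: a_k\le d(x,y)\le b_k\}<2^{-k}$; the limit radius $r_n=\lim_k a_k$ then has a null sphere, and $\{y:d(x,y)>r_n\}$ serves as the complement witness for $\Balls(x,r_n)$ as an element of $\AD{\mu}(T)$. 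With either of these the argument closes, and at a level of detail well beyond what the paper itself provides.
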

\begin{proof}
By \cref{ultrametric-has-Fraser} we know that the collection of open balls has the Vitali covering property. However if $T$ is a computable metric space then we can computably find a map 
that takes each element $x$ in $T$ to a sequence of open balls that converges regularly to $x$.
\end{proof}

Similarly, for $\Reals^n$, the collection of closed sets also has the computable Vitali covering property with respect to Lebesgue measure.

\begin{definition}
\label{Definition: Conditional Distribution for non Tjur}
Let $S$ and $T$ be computable metric spaces, 
let $\mu \in \M_1(S\times T)$,
let $\mu_T$ be its projection onto $T$,
and let $V \subseteq \Borel {T}$ be a class of $\mu_T$-continuity sets having the computable Vitali covering property with respect to $\mu_T$.
Define $\Dstar{S, T}(\mu,V)$ to be the map sending $t\in T$ to 
$\iota_1 ( \mu^t_{\pi_2,V} )$,
where $\mu^t_{\pi_2,V}$ is as defined in \cref{Fraser-disintegration thm}
and $\iota_1$ is as defined after \cref{ACD}.
\end{definition}

\begin{remark}
Observe that, by \cref{Fraser-disintegration thm}, $\Dstar{S,T}(\mu,V)$ is equal to the composition of the map $\iota_1$ with a disintegration of $\mu$ along $\pi_2$.
\end{remark}

\begin{remark}
By \cref{maintjurlemma}, 
we see that, for every $\mu \in \ACD_{S,T}$ 
and class $V \subseteq \Borel {T}$ of $\PF {\pi_2} \mu$-continuity sets having the computable Vitali covering property,
 $\Dstar{S,T}(\mu,V)=\DD_{S,T}(\mu)$.
\end{remark}

\subsection{Upper Bound}
\label{upper-worstcase}

\begin{proposition}\label{disintthm}
Let $S$ and $T$ be computable metric spaces, let
 $\mu\in \ProbMeasures(S\times T)$ be computable, 
 let $\mu_T$ be its projection onto $T$,
 let $V \subseteq \Borel {T}$ be a class of $\mu_T$-continuity sets with the strong Vitali covering property,
 and assume that $\Dstar{S,T}(\mu,V)$
 is total.
Then there exists a 
computable map $K \colon T \to \ContFuncs(\Nats,\ProbMeasures(S))$ 
such that
\[
\Dstar{S, T}(\mu,V) = \Lim_{\ProbMeasures(S)} \circ K.
\]
In particular,
\[
\Dstar{S, T}(\mu,V) \lesW \Lim_{\ProbMeasures(S)}.
\]
\end{proposition}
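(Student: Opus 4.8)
The plan is to exhibit the computable map $K$ explicitly and then read off the reduction as the trivial strong Weihrauch reduction whose outer witness is the identity. Concretely, I would let $U \colon T \multito \ContFuncs(\Nats, \AD{\mu_T}(T))$ be the map supplied by the strong Vitali covering property and set $K(t)$ to be the sequence $n \mapsto \iota_1\bigl(\mu^{U(t)(n)}_{\pi_2}\bigr)$. Since $\mu$ is computable, its projection $\mu_T$ is computable, so $U$ (being computable relative to $\mu_T$) is computable; from a name for $t$ we may thus compute one of its values $\langle U(t)(n)\rangle$, a sequence of $\mu_T$-continuity sets. By \cref{compcond}, each $\mu^{U(t)(n)}_{\pi_2} = \mu^{S \times U(t)(n)}$ is computable from $\mu$ and $U(t)(n)$ (whenever $\mu_T(U(t)(n)) > 0$), and since $\iota_1$ is computable, the whole sequence $K(t)$ is computable from $\mu$ and $t$. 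Hence $K$ is a computable map $T \to \ContFuncs(\Nats, \ProbMeasures(S))$.

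The heart of the argument is to show that, for every $t \in T$, the sequence $K(t)$ converges in the Prokhorov metric to $\Dstar{S,T}(\mu,V)(t) = \iota_1(\mu^t_{\pi_2, V})$. First I would observe that $U(t)(n)$ and its companion regularly-converging sequence $\langle E_n\rangle \subseteq V$ from the definition of the strong Vitali property satisfy $U(t)(n) \subseteq E_n$ and $\mu_T(E_n \setminus U(t)(n)) = 0$; pulling back along $\pi_2$, the sets $\pi_2^{-1}(E_n)$ and $\pi_2^{-1}(U(t)(n))$ differ by a $\mu$-null set, so both numerators and denominators of the defining ratios agree and the conditional measures coincide: $\mu^{U(t)(n)}_{\pi_2} = \mu^{E_n}_{\pi_2}$. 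Since $\langle E_n\rangle \subseteq V$ converges regularly to $t$, \cref{Fraser-disintegration thm} identifies the limit of $\mu^{E_n}_{\pi_2}$ with $\mu^t_{\pi_2, V}$. Finally, continuity of $\iota_1$ transports this to convergence of $\iota_1(\mu^{U(t)(n)}_{\pi_2})$ to $\iota_1(\mu^t_{\pi_2, V})$ in $\ProbMeasures(S)$; as the Prokhorov metric induces the weak topology, $K(t)$ is a Cauchy sequence with limit $\Dstar{S,T}(\mu,V)(t)$. Consequently $\Lim_{\ProbMeasures(S)}\bigl(K(t)\bigr) = \Dstar{S,T}(\mu,V)(t)$ for every $t$, so $\Dstar{S,T}(\mu,V) = \Lim_{\ProbMeasures(S)} \circ K$, and taking the identity as the outer computable map yields $\Dstar{S,T}(\mu,V) \lesW \Lim_{\ProbMeasures(S)}$.

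The main obstacle is precisely this convergence step: upgrading the set-wise convergence that \cref{Fraser-disintegration thm} provides to genuine weak (Prokhorov) convergence holding at \emph{every} point $t$, not merely $\mu_T$-almost everywhere. \cref{Fraser-disintegration thm}(i) supplies, for each fixed Borel $A$, only a measure-one set $C_A$ on which $\mu^{E_n}_{\pi_2}(A) \to \mu^t_{\pi_2,V}(A)$; to conclude weak convergence at a fixed $t$ I would test against a countable family of continuity sets (equivalently, bounded continuous functions) determining the weak topology on the separable space $\ProbMeasures(S \times T)$, and it is exactly the hypothesis that $\Dstar{S,T}(\mu,V)$ is total that guarantees these limits exist simultaneously and assemble into the probability measure $\mu^t_{\pi_2,V}$ for every $t$. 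A secondary technical point is ensuring $\mu_T(U(t)(n)) > 0$ so that the conditional measures are defined: for $t \in \supp(\mu_T)$ this follows from condition (3) of regular convergence, which forces $\mu_T(U(t)(n)) = \mu_T(E_n) \geq \alpha\,\mu_T(B_n) > 0$, while off the support the disintegration reduces to the computable marginal $\iota_1(\mu)$, which may be emitted as a constant sequence. Once these points are secured, the reduction is immediate.
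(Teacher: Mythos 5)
Your argument is essentially the paper's own proof: define $K(t)(n) = \iota_1\bigl(\mu^{U(t)(n)}_{\pi_2}\bigr)$ via the strong Vitali covering property, invoke \cref{compcond} for computability, identify $\mu^{U(t)(n)}_{\pi_2}$ with $\mu^{E_n}_{\pi_2}$ because the two sets have equal $\mu_T$-measure, and conclude convergence to $\Dstar{S,T}(\mu,V)(t)$ from \cref{Fraser-disintegration thm} together with the totality hypothesis. The one caveat is your proposed case split on whether $t \in \supp(\mu_T)$, which is not computably decidable from a name for $t$; but the paper's proof likewise tacitly assumes $\mu_T(E_n) > 0$ throughout, so this does not constitute a genuinely different route.
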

\begin{proof}
Note that, if $\mu_T(H) = \mu_T(H') > 0$ and $H' \subseteq H$, then $\mu^H_{\pi_2} = \mu^{H'}_{\pi_2}$.
Now, let $t \in T$.  
By the strong Vitali covering property, we may compute from $t$ 
an element $U \in \ContFuncs(\Nats,\AD {\mu}(T))$ 
such that 
$\nu(n) \defas \mu^{E_n}_{\pi_2} = \mu^{U_n}_{\pi_2}$
for all $n\in\Nats$, where 
$\<E_i\>_{i\in\Nats}$  is some sequence of elements of $V$
converging regularly to $t$ with respect to $\mu_T$.
By \cref{compcond}, $\nu(n)$ is computable from $t$, uniformly in $n$.
By Definition~\ref{Definition: Conditional Distribution for non Tjur}, the limit of $\nu$ exists and is $\Dstar{S, T}(\mu,V)(t)$.
Take $K$ to be the map sending $t$ to $\nu$.
\end{proof}

\subsection{Lower Bound}
\label{lower-worstcase-referee}

\newcommand{\ZZ}{\mathcal {C}}
\newcommand{\VZZ}{V_{\ZZ}}

Let $\II \defas [0,1] \setminus \{\frac{a+1}{2^b}\st a, b \in\Nats\}$. There is then a computable injection $i\colon \II \to \Cantor$ which takes each element to its binary representation. Further $\II$ is a Lebesgue measure one subset of $[0,1]$ and Lebesgue measure on $\Cantor$ is the pushforward of Lebesgue measure on $\II$ along $i$.
Let $\ZZ = \Cantor \times \Cantor$, which is an computable ultrametric space (i.e. a computable metric space where the metric is an ultrametric). Then $\Nats \times \ZZ$ can be viewed as a computable ultrametric space as well. Let $\VZZ$ be the collection of open balls in $\ZZ$. Note that, by \cref{ultrametric-has-Fraser}, the class $\VZZ$ has the Vitali covering property with respect to any measure. Because every open ball in an ultrametric space is a continuity sets with respect to every measure, it is also immediate that $\VZZ$ has the strong Vitali covering property with respect to every measure.

Write $a_n \sim b_n$ when $\frac {a_n}{b_n} \to 1$ as $n \to \infty$.
The proof of the next proposition follows a suggestion from an anonymous referee and is significantly simpler than the original one presented.

\begin{proposition}
\label{continuous lower bound-referee version}
There is a computable distribution $\mu \in \ProbMeasures(\Nats \times \ZZ)$ such that 
its disintegration $\Dstar{\Nats, \ZZ}(\mu,\VZZ)$ 
(along the projection $\pi: \Nats \times \ZZ \to \ZZ$) 
exists everywhere and satisfies $\EC \lesW \Dstar{\Nats, \ZZ}(\mu,\VZZ)$.
\end{proposition}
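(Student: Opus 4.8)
The plan is to internalize the family of measures from the proof of \cref{ECleD} into the first Cantor factor, using the embedding $i\colon\II\to\Cantor$ to carry the continuous $[0,1]$ construction over to $\Cantor$, and then to read off $\EC$ from the Fraser--Naderi disintegration at one special point. Write $\ZZ=\Cantor\times\Cantor$ and let $\pi\colon\Nats\times\ZZ\to\ZZ$ be the projection. Fix a name of an input $x\in\ContFuncs(\Nats,\Sier)$. Exactly as in \cref{ECleD}, from $x$ I would compute a total function $y\colon\Nats\times\Nats\to\{0,1\}$, nondecreasing in its second argument, with $x(m)=1$ iff $y(m,j)=1$ for some $j$; the associated first-hitting index is $\iota(m)=\inf\{j\st y(m,j)=1\}\in\Nats\cup\{\infty\}$, so that $x(m)=1$ iff $\iota(m)<\infty$. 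Since the values of $y$ are genuine decidable bits, the point $a_x\in\Cantor$ whose digit in position $\langle m,j\rangle$ equals $y(m,j)$ has computable digits, so $K\colon x\mapsto t_x\defas(a_x,\bar0)\in\ZZ$ is computable, where $\bar0=i(0)$ is the all-zeros sequence.

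For the measure I would transfer the densities $f_k(z)=1+\cos(2^{k+1}\pi z)$ and $f_\infty\equiv1$ of \cref{ECleD} to the second Cantor factor via $z=i^{-1}$. For $a\in\Cantor$ set $\iota_a(m)=\inf\{j\st a_{\langle m,j\rangle}=1\}$ and let $p(a,m)=\langle m,\iota_a(m)\rangle$ be the \emph{position} of the firing digit (with $p(a,m)=\infty$ if there is none); crucially, I index the cosine by this position rather than by $\iota_a(m)$, so that a digit still ``free'' at depth $n$ always contributes a frequency of index at least $n$. Define $\mu$ on $\Nats\times\Cantor\times\Cantor$ by assigning $\{2m\}\times da\times db$ the density $2^{-m-2}f_{p(a,m)}(i^{-1}(b))$ and $\{2m+1\}\times da\times db$ the density $2^{-m-2}\bigl(2-f_{p(a,m)}(i^{-1}(b))\bigr)$ against $\lambda\otimes\lambda$, where $\lambda$ is the uniform measure on $\Cantor$. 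Since these densities sum over $k$ to $1$, the $\ZZ$-marginal is $\lambda\otimes\lambda$, so $\mu$ has full support. Computability of $\mu$ is the analogue of \cref{indvcomp}: it suffices to compute $\mu$ of the clopen continuity sets $\{k\}\times[\sigma]\times[\tau]$, and refining $[\sigma]$ to depth $\ge|\tau|$ reduces to two computable cases --- either $\sigma$ already forces a firing position (a known finite value), or it forces the firing position to be at least $|\tau|$, in which case $\int_{J_\tau}f_{p}=2^{-|\tau|}$, the length of the dyadic interval $J_\tau=i^{-1}([\tau])$ corresponding to $[\tau]$, regardless of the exact (unknown) value of $p$, because the cosine integrates to zero over dyadic intervals of matching or coarser depth. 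Then $\mu$ is computable by \cref{measure-computable from basis}.

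Next I would compute the disintegration using that $\ZZ$ is ultrametric and $\VZZ$ is the set of open balls. By \cref{Fraser-disintegration thm} and \cref{ultrametric-has-Fraser} the Vitali covering property holds, and a sequence of balls converges regularly to $t$ exactly when it is a cofinal family of the nested cylinders $B_n(t)$ about $t$; hence $\iota_1(\mu^t_{\pi,\VZZ})\{k\}=\lim_n\mu^{\,\Nats\times B_n(t)}\{k\}$ whenever this limit exists, and $\mu^{\Nats\times B_n(t)}\{k\}$ is the average of the density for $\{k\}$ over the depth-$n$ cylinders of both coordinates. I claim this limit exists for \emph{every} $t=(a,b)$, so that $\Dstar{\Nats,\ZZ}(\mu,\VZZ)$ is total: if $\iota_a(m)<\infty$ the firing position is eventually fixed and the $b$-average converges to $2^{-m-2}f_{p(a,m)}(i^{-1}(b))$; if $\iota_a(m)=\infty$ then every free firing digit sits at a position $\ge n$, so each contributes a cosine of index $\ge n$ whose average over the depth-$n$ interval is exactly zero, giving the value $2^{-m-2}$ for all $n$. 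Summing the even/odd pair gives $2^{-m-1}$, so no mass escapes and the limit is a genuine probability measure. In particular, at $t_x=(a_x,\bar0)$ we have $i^{-1}(\bar0)=0$ and $f_p(0)=2$ for finite $p$, whence $\iota_1(\mu^{t_x}_{\pi,\VZZ})\{2m\}$ equals $2^{-m-1}$ if $x(m)=1$ and $2^{-m-2}$ if $x(m)=0$. As singletons of $\Nats$ are continuity sets, this value is a computable real from any name of the output measure, and comparing it to the midpoint $3\cdot2^{-m-3}$ (to which it is never equal) decides $x(m)$; this yields a computable $H$ with $\EC=H\circ\Dstar{\Nats,\ZZ}(\mu,\VZZ)\circ K$, i.e.\ $\EC\lesW\Dstar{\Nats,\ZZ}(\mu,\VZZ)$.

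The main obstacle is forcing the disintegration to exist \emph{everywhere} and to take the two cleanly separated values at the measure-zero target point $t_x$, despite the forced coupling of the two ball radii in the product ultrametric. This is exactly what the position-indexed frequencies buy: they guarantee that digits which have not yet fired by depth $n$ correspond to frequencies finer than the depth-$n$ conditioning interval, so they wash out of the average and cannot corrupt the limit. Verifying this, together with the uniform-in-$a$ version of the vanishing-integral computation needed for computability of $\mu$ (the analogue of \cref{indvcomp}) and the check that all regularly converging ball sequences yield the same limit at $t_x$, is where the real work lies.
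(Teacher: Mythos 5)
Your construction is correct, but it is genuinely different from the one in the paper. The paper builds $\mu$ as a mixture $\mu(A\times B)=\int_B \eta_{\rho(s)}(A)\,\lambda(\dee s)$, where $\eta_x$ is the pushforward to $\Nats\times\Cantor$ of the measure from \cref{ECleD} and $\rho\colon\Cantor\to\ContFuncs(\Nats,\Sier)$ is a carefully chosen decoding map ($\rho(s)(k)=1$ iff $0(1^k0)^n$ occurs in $s$ at position $n$ for some $n$); the conditioning point is $(0^\w,\bar\rho(x))$, and the heart of the argument is the estimate $\lambda(E_n)\sim\lambda([s_n])$ showing that the $s'\in[s_n]$ decoding to a different value of $x(k)$ have vanishing relative measure --- a fact that depends delicately on $\rho$ (the paper's closing remark exhibits a natural decoding for which it fails). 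You instead dispense with the mixture and the decoding map entirely: you write the enumeration into the first Cantor coordinate digit by digit and let the density on the second coordinate be a cosine whose frequency is indexed by the \emph{position} of the first firing digit. This buys you an exact cancellation in place of the paper's asymptotic one --- a digit unresolved at depth $n$ contributes a frequency of index at least $n$, which integrates to exactly $2^{-n}$ over the depth-$n$ dyadic interval --- so the conditional probabilities at depth $n$ are exactly right in the non-firing case rather than merely convergent, and the same cancellation gives computability of $\mu$ (your analogue of \cref{indvcomp}) and totality of the disintegration at every point of $\ZZ$ in one stroke. The readout of $x$ from the conditional measure at the target point is the same in both proofs (comparison of $\nu\{2m\}$ against a rational separating $2^{-m-1}$ from $2^{-m-2}$, using that singletons are continuity sets). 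Two small points to nail down in a full write-up: fix a pairing $\langle m,j\rangle$ monotone in $j$ so that a depth-$N$ prefix of $a$ either determines the first firing position for $m$ or forces it to be $\ge N$; and record explicitly that in the product ultrametric every ball containing $t$ is a cylinder $[a{\restriction d}]\times[b{\restriction d}]$, so that regular convergence with respect to $\lambda\otimes\lambda$ forces $d\to\infty$ and all regularly converging sequences yield the same limit.
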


\begin{proof}

In \cref{ECleD}, each $x \in \ContFuncs(\Nats,\Sier)$ is mapped to a measure $\mu_x$ over $\Nats \times [0,1]$ admitting a total continuous disintegration along the projection $\Nats \times [0,1] \to [0,1]$. Further, for each $x \in \ContFuncs(\Nats,\Sier)$ we have $\mu_x(\II) = 1$. This implies that the map $\mu_x \to \mu_x^{\II}$ is computable and also that for each $x \in \ContFuncs(\Nats,\Sier)$, the measure $\mu_x^{\II}$ admits a total continuous disintegration along the projection $\Nats \times \II \to \II$ with $\mu_x^y = (\mu_x^\II)^y$ for all $y \in \II$. 

Let $\eta_x = \PF i {\mu_x^{\II}}$, be the pushforward of $\mu_x^{\II}$ along $i$. Note there is a map $\alpha$ that takes an open ball $B$ in $\Cantor$ to an open ball $\alpha(B) \subseteq i^{-1}(B) \subseteq \II$ such that for all $x \in \Cantor$, $\nu_x(B) = \mu_x^{\II}(\alpha(B))$. As such, the map $\mu_x^{\II} \mapsto \eta_x$ is computable and so the map $x \mapsto \eta_x$ is also computable. Further it is immediate that for every $y \in \II$, $i(y)$ is a Tjur point (for the projection map onto $\Cantor$) and $\eta_x^{i(y)} = \mu_x^{y}$.

For each $x$, let $\nu_x \in \ProbMeasures(\Nats)$ be $\eta_x^{i(0)}$ ($= \mu_x^0$), i.e., the image of $i(0)$ under the continuous disintegration of $\eta_x$ along the projection. 
From \cref{gdefn,g is comp}, we see that, for every $x$ and $k \in \Nats$,
\[
x(k) = 2^{k+2} \, \nu_x\{2k\}.
\]
Hence $x$ is a computable element of $\ContFuncs(\Nats,\ZO)$ from $\nu_x$.
Let $H$ be a realizer for the map from $\ProbMeasures(\Nats)$  to $\ContFuncs(\Nats,\ZO)$ taking $\nu_x$ to $x$.

Consider the map $\rho$ from $\Cantor$ to $\ContFuncs(\Nats,\Sier)$ such that
$\rho(s) = x$ if and only if, for all $ k \in \Nats$, 
\[
   x(k) = 1 
   \iff \text{$\exists n \in \Nats$ s.t.\ the string } 0(1^k0)^n \text{ appears in $s$ at position $n$. }
\]
Call an element $s \in \Cantor$ \emph{$\rho$-faithful} when, for all $k\in \Nats$, it holds that $s$ contains the substring $01^k0$ if and only if $\rho(s)(k) = 1$.
It is straightforward to establish that $\rho$ is a total computable surjective map 
admitting a computable multi-valued inverse 
realized by a map $\bar\rho$ such that $\bar\rho(x)$ is $\rho$-faithful for all $x$. 
Now endow $\Cantor$ with the uniform measure $\lambda$ and
define $\mu \in \ProbMeasures ( \Nats \times \Cantor \times \Cantor )$ as the mixture given by
\[
\mu(A \times B ) = \int_B \eta_{\rho(s)} (A)\, \lambda(\dee s),
\]
for all measurable subsets $A \subseteq \Nats \times \Cantor$ and $B \subseteq \Cantor$,
and let $\kappa=\Dstar{\Nats, \ZZ}(\mu,\VZZ)$ be the disintegration of $\mu$
along the projection $\pi:\Nats \times \ZZ \to \ZZ$.
Let $G$ be a realizer for $\kappa$.

For a finite string $s'$,
write $[s']$ for the clopen subset of $\Cantor$ comprised of all strings with prefix $s'$.
For $n \ge 0$ and $s \in \Cantor$, 
let $s_n$ be the length-$n$ prefix of $s$,
let $B^s_n = [0^n] \times [s_n]$, 
and note that $B^s_n \in \VZZ$ 
and that $B^s_1,B^s_2,\dotsc$ converges regularly to $(0^\w,s)$ with respect to Lebesgue measure. 
It follows that, for every $s \in \Cantor$ and subset $A \subseteq \Nats$,
\[
\kappa((0^\w,s))(A) = \lim_{n \to \infty} \frac { \mu( A \times B^s_n ) } {\mu ( \Nats \times B^s_n ) },
\]
provided the limit exists.

\begin{claim}\label{mainclaimfraser}
For every $\rho$-faithful $s \in \Cantor$,
we have $\lim_{n \to \infty} \frac { \mu( A \times B^s_n ) } {\mu ( \Nats \times B^s_n ) } = \nu_{\rho(s)}(A)$.
\end{claim}
\begin{claimproof}
Let $s \in \Cantor$ be $\rho$-faithful, let $x \in \ContFuncs(\Nats,\Sier)$ be the image of $s$ under $\rho$,
and let $k \in \Nats$.
For every $n \in \Nats$, let
$
E_n = \{ s' \in [s_n] \st \rho(s')(k) = x(k) \}
$
and note that
\[
\mu( \{k\} \times [0^n] \times E_n) 
&= \eta_x(\{k\} \times [0^n]).
\]
If $x(k) = 1$ then there exists $n_0 \in \Nats$, 
such that $E_n = [s_n]$ for every $n \ge n_0$.
If $x(k) = 0$, then, for $ n \ge k+1$, 
\[
\frac {\lambda (E_n)}{\lambda ([s_n])} 
\ge 1 - \sum_{i=0}^\infty 2^{- (k+1)(n+i) - 1} 
\ge 1 - \sum_{i=0}^\infty 2^{- n - i} 
= 1 - 2^{-n+1},
\]
where
the first inequality follows from the $\rho$-faithfulness of $s$ and a union bound on the event that $0(1^k0)^{n+i}$ appears at position $n+i$ for some $i \in \Nats$.
Therefore, in general,
\[\label{rhoreq}
\lambda (E_n) 
\sim \lambda ([s_n]), \text{ for } n \to \infty,
\]
and so
\[
\mu( \{k\} \times [0^n] \times [s_n])
\sim \mu( \{k\} \times [0^n] \times E_n), \text{ for } n \to \infty.
\]
But then
\[
\lim_{n \to \infty} 
   \frac {\mu (\{k\} \times B_n^s)}{\mu(\Nats \times B_n^s)}
 = \lim_{n \to \infty} 
   \frac {\eta_x (\{k\} \times [0^n])}{\eta_x(\Nats \times [0^n])} 
 = \nu_x (\{k\}),
\]
where the final equality follows from
the fact, established in \cref{ECleD}, 
that $0^\w$ is a Tjur point of $\eta_x$ (along the projection onto $\Cantor$),
completing the proof of the claim.
\end{claimproof}

Now let $x \in \ContFuncs(\Nats,\Sier)$. 
From $x$, we can, by assumption, 
compute a $\rho$-faithful element $s \in \Cantor$ such that $\rho(s) = x$.
Let $K$ be a realizer for the map taking $x$ to $(0^\w,s) \in \ZZ$.
By the above claim, 
the image of $(0^\w,s)$ under $\kappa$ 
is $\nu_x$, and so the composition $G \circ K$ is a realizer for the map taking $x$ to $\nu_x$.  Finally, the map realized by $H$ above takes $\nu_x$ to $x$ as an element of $\ContFuncs(\Nats,\ZO)$, 
completing the proof.
\end{proof}

\begin{remark}
The particular choice of $\rho$ is essential here: different surjections from $\Cantor$ to $\ContFuncs(\Nats,\Sier)$ do not necessarily lead to a reduction.
For example, consider the  map $\rho'$ taking a binary sequence to the set of all 
natural numbers $k$ such that $01^k0$ appears in the sequence. 
Like $\rho$, the map $\rho'$ is
a total computable surjective map admitting a computable multi-valued inverse whose images are $\rho'$-faithful. However, \cref{rhoreq} fails to hold for $\rho'$.  
The source of the failure is the fact that, for $\lambda$-almost all $s \in \Cantor$, we have $\rho'(s) = c_1$ where $c_1(k) = 1$ for all $k \in \Nats$!  
Had we used $\rho'$, then the disintegration of $\mu$ obtained by taking limits would have been the constant map $(u,s) \mapsto \nu_{x'}$.
It is surprising, though easy to verify, that the disintegration $(u,s) \mapsto \nu_{\rho'(s)}$ is a version of this disintegration, but it does not arise from the limiting construction of the disintegration when $\mu$ is constructed from $\rho'$.
\end{remark}

\begin{theorem}
There are computable ultrametric spaces $S$ and $T$ and a computable distribution $\mu \in \ProbMeasures(S \times T)$ such that if $V$ is the collection of open balls in $T$ then
the disintegration $\Dstar{S, T}(\mu,V)$ 
(along the projection $\pi: S \times T \to T$) 
exists everywhere and satisfies $\Lim \eqsW \Dstar{S, T}(\mu,V)$.
\end{theorem}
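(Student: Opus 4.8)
The plan is to assemble the theorem directly from the two bounds already established, instantiated at the spaces of \cref{continuous lower bound-referee version}. Concretely, I would take $S = \Nats$, equipped with the discrete metric (which is an ultrametric, making $\Nats$ a computable ultrametric space), and $T = \ZZ = \Cantor \times \Cantor$, with $\mu \in \ProbMeasures(\Nats \times \ZZ)$ the computable distribution furnished by \cref{continuous lower bound-referee version} and $V = \VZZ$ the collection of open balls in $\ZZ$. Before invoking anything, I would record that the hypotheses of the upper-bound proposition are met: since $\ZZ$ is a computable ultrametric space and open balls in an ultrametric space are clopen, $\VZZ$ consists of $\mu_T$-continuity sets and, by \cref{computable Vitali for ultrametric}, has the strong Vitali covering property with respect to $\mu_T$; moreover $\Dstar{\Nats, \ZZ}(\mu,\VZZ)$ is total by \cref{continuous lower bound-referee version}.

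For the lower bound I would simply combine $\EC \lesW \Dstar{\Nats, \ZZ}(\mu,\VZZ)$ from \cref{continuous lower bound-referee version} with $\Lim \eqsW \EC$ from \cref{ECeqWLim}, using transitivity of strong Weihrauch reducibility to conclude $\Lim \lesW \Dstar{\Nats, \ZZ}(\mu,\VZZ)$.

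For the upper bound I would apply \cref{disintthm} to the data above, obtaining $\Dstar{\Nats, \ZZ}(\mu,\VZZ) \lesW \Lim_{\ProbMeasures(\Nats)}$, and then reduce $\Lim_{\ProbMeasures(\Nats)}$ back to $\Lim$ by way of \cref{compembeds}. The only genuine verification here---and the step I expect to be the main obstacle---is exhibiting a \emph{computable} embedding of $\Cantor$ into the computable metric space $\ProbMeasures(\Nats)$, since that is exactly what \cref{compembeds} requires in order to yield $\Lim_{\ProbMeasures(\Nats)} \eqsW \Lim$. For this I would use the map $e \colon \Cantor \to \ProbMeasures(\Nats)$ defined by $e(b) = \sum_{n \in \Nats} 2^{-n-1}\, \delta_{2n + b(n)}$. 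This map is computable uniformly in a name for $b$ (it is an explicit, uniformly convergent combination of point masses with dyadic weights), and it is injective; being a continuous injection from a compact space into a Hausdorff space, it is automatically a topological embedding. Its inverse is computable because each singleton $\{2n\}$ is clopen in the discrete space $\Nats$, hence a continuity set of every measure, so $e(b)(\{2n\})$ is computable from $e(b)$, and one can then decide $b(n)$ by testing whether this mass is $0$ or $2^{-n-1}$ (two well-separated values, so the decision is effective).

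Finally, chaining the two bounds with the equivalence $\Lim_{\ProbMeasures(\Nats)} \eqsW \Lim$ gives
\[
\Lim \lesW \Dstar{\Nats, \ZZ}(\mu,\VZZ) \lesW \Lim_{\ProbMeasures(\Nats)} \eqsW \Lim,
\]
whence $\Lim \eqsW \Dstar{\Nats, \ZZ}(\mu,\VZZ)$. Taking $S = \Nats$ and $T = \ZZ$ then establishes the theorem.
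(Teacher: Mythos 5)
Your proposal is correct and follows essentially the same route as the paper: instantiate $S=\Nats$, $T=\ZZ$, $V=\VZZ$ with the measure from \cref{continuous lower bound-referee version}, get the lower bound from that proposition together with \cref{ECeqWLim}, and the upper bound from \cref{disintthm} together with \cref{compembeds}. The only difference is that you explicitly exhibit the computable embedding of $\Cantor$ into $\ProbMeasures(\Nats)$ required by \cref{compembeds} (and your embedding $e(b)=\sum_n 2^{-n-1}\delta_{2n+b(n)}$ does work), a detail the paper leaves implicit.
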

\begin{proof}
Let $S = \Nats$, $T = \ZZ$, and $V = \VZZ$ with $\ZZ, \VZZ$, and $\mu$ as in \cref{continuous lower bound-referee version}. Then $\Lim \lesW \Dstar{S, T}(\mu,V)$ follows immediately from \cref{continuous lower bound-referee version} and \cref{ECeqWLim}.
\nobreak Finally, $\Dstar{S, T}(\mu,V) \lesW\Lim $ follows immediately from \cref{disintthm} and \linebreak \cref{compembeds}. 
\end{proof}

\section*{Acknowledgments}
The authors would like to thank Arno Pauly for helpful conversations
regarding Weihrauch reducibility (and in particular the argument for \cref{EC-is-a-cylinder}),
and the anonymous referees for detailed comments on a draft, including 
suggestions that greatly simplified the proofs of \cref{ECleD,ECDD,continuous lower bound-referee version}.
The authors would also like to thank
Persi Diaconis for pointing us to Tjur's work, 
Laurent Bienvenu for useful discussions,
and Quinn Culver, Bj{\o}rn Kjos-Hanssen, and Geoff Patterson
for comments on an earlier version of this material.

Work on
this publication was made possible through the support of 
ARO grant W911NF-13-1-0212,
ONR grant N00014-13-1-0333,
NSF grants DMS-0901020 and
DMS-0800198, 
DARPA Contract Award Numbers FA8750-14-C-0001 and FA8750-14-2-0004,
and grants from the
John Templeton Foundation and Google. The opinions expressed in this publication are those of the authors and do not necessarily reflect the views of the John
Templeton Foundation or the U.S.\ Government.

This paper was partially written while CEF and DMR were
participants in the
program \emph{Semantics and Syntax: A Legacy of Alan Turing} at the
Isaac Newton Institute for the Mathematical Sciences.
DMR was partially supported
by graduate fellowships from the National Science Foundation and MIT Lincoln
Laboratory, by a Newton International Fellowship, Emmanuel Research Fellowship,
 and by an NSERC Discovery Grant and Connaught Award.



\renewcommand*{\bibfont}{\small}
\defbibheading{bibliography}{\section*{References}\markboth{}{}}
\printbibliography

\vfill 


\end{document}